\newtheorem{theorem}{Theorem}[section]
\newtheorem{lemma}{Lemma}[section]
\newtheorem{corollary}{Corollary}[section]
\theoremstyle{remark}
\newtheorem{remark}{Remark}[section]
\numberwithin{equation}{section}
\def\ole{\preceq}
\begin{document}
\title[Error analysis of BKM/AB]{Error analysis of the Bergman kernel method with singular basis functions}
\date{\today}

\author[M.\ Lytrides]{M.\ A.\ Lytrides}
\email{map6lm1@ucy.ac.cy}
\address{Department of Mathematics and Statistics\\
         University of Cyprus\\
         P.O. Box 20537\\
         1678 Nicosia\\
         Cyprus}
\author[N.\ Stylianopoulos]{\ N.\ S.\ Stylianopoulos}
\email{nikos@ucy.ac.cy}
\urladdr{http://www.ucy.ac.cy/nikos.html}
\address{Department of Mathematics and Statistics\\
         University of Cyprus\\
         P.O. Box 20537\\
         1678 Nicosia\\
         Cyprus}

\subjclass{Primary 30C30; Secondary 30E10, 30C40, 65AE05}

\keywords{Bergman orthogonal polynomials, Numerical conformal mapping, Bergman
kernel method, Singular basis function.}

\begin{abstract}
Let $G$ be a bounded Jordan domain in the complex plane with
piecewise analytic boundary. We present theoretical
estimates and numerical evidence for certain phenomena,
regarding the application of the Bergman kernel method with algebraic
and pole singular basis functions, for approximating the conformal mapping of
$G$ onto the normalized disk. In this way, we complete the task of providing full
theoretical justification of this method.
\end{abstract}
\maketitle

\section{Introduction}\label{sec:1}
Let $G$ be a bounded, simply-connected domain in the complex plane
$\mathbb{C}$ whose boundary $\Gamma:=\partial G$ is a Jordan curve and let
$\Omega:=\overline{\mathbb{C}}\setminus\overline{G}$ denote the complement of $\overline{G}$
with respect to the extended complex plane. Fix
$z_{0}\in G$ and let $f_{0}$ denote the conformal map of
$G$ onto the disk $D(0,r_{0}):=\{z:|z|<r_{0}\}$, normalized by the conditions
$f_{0}(z_{0})=0$ and $f'_{0}(z_{0})=1$. The quantity $r_{0}:=r_{0}(G,z_{0})$
is called the conformal radius of $G$ with respect to $z_{0}$.

For the inner product
\begin{equation}\label{eq:1}
\langle f,g\rangle:=\int_{G}f(z)\,\overline{g(z)}\,dA(z),
\end{equation}
where $dA$ denotes the differential of the area measure on $\mathbb{C}$, we consider the
Hilbert space
\begin{equation}\label{eq:2}
L^{2}_{a}(G):=\big\{f:f\,\, \textmd{analytic}\,\, \textmd{in}\,\,
G,\,\,\langle f,f\rangle<\infty\big\},
\end{equation}
with corresponding norm $\|f\|_{L^{2}(G)}:=\langle f,f\rangle^{\frac{1}{2}}$.

Let $K(\cdot,z_{0})$ denote the \textit{Bergman kernel function} of $G$ with
respect to $z_{0}$. This is the unique function of $L^{2}_{a}(G)$ satisfying the reproducing property
\begin{equation}\label{eq:3}
\langle g,K(\cdot,z_{0})\rangle=g(z_{0}),\,\, \textmd{for}\,\,\textmd{all}\,\, g\in L^{2}_{a}(G).
\end{equation}
It follows from (\ref{eq:3}) that the kernel $K(\cdot,z_{0})$ is related to the mapping function $f_{0}$
by means of
\begin{equation}\label{eq:4}
f^\prime_{0}(z)=\frac{K(z,z_{0})}{K(z_{0},z_{0})}\quad \textmd{and}\quad
f_{0}(z)=\frac{1}{K(z_{0},z_{0})}\int_{z_{0}}^{z}K(\zeta,z_{0})d\zeta,
\end{equation}
see e.g.\ \cite[p.\,33]{Gabook87}.
These yield the two relations,
\begin{equation}\label{eq:5}
K(z,z_{0})=\frac{1}{\pi r_{0}^{2}}f^\prime_{0}(z)\quad\textup{and}\quad
r_{0}=\frac{1}{\sqrt{\pi K(z_0,z_{0})}}.
\end{equation}

Now, let $\{P_n\}_{n=0}^{\infty}$ denote the sequence of the \textit{Bergman polynomials} of
$G$. This is defined as the sequence of polynomials
\begin{equation}
P_n(z) = \lambda_n z^n+ \cdots, \quad \lambda_n>0,\quad n=0,1,2,\ldots,
\end{equation}
that are orthonormal with respect to the inner product (\ref{eq:1}), i.e.,
\begin{equation}
\int_{G}P_{m}(z)\overline{P_{n}(z)}dA(z)=\delta_{m,n}.
\end{equation}
The Bergman polynomials form a complete orthonormal system in $L^{2}_{a}(G)$.
Therefore, in view of the reproducing property (\ref{eq:3}),
\begin{equation}\label{eq:Kseries}
K(z,z_{0})=\sum_{j=0}^{\infty}\overline{P_{j}(z_{0})} P_{j}(z),
\end{equation}
locally uniformly with respect to $z\in G$.

The \emph{Bergman kernel method} (BKM) is an orthonormalization method for computing approximations to the conformal map $f_{0}(z)$.
It is based on the fact that the kernel $K(z,z_0)$ is given explicitly in terms of the Bergman polynomials $\{P_n(z)\}_{n=0}^{\infty}$. Thus, the partial sums of the Fourier series expansion of $K(z,z_{0})$
are given by
\begin{equation}\label{eq:kernel3}
K_{n}(z,z_{0})
:=\sum_{j=0}^{n}\overline{P_{j}(z_{0})} P_{j}(z),\quad n\in\mathbb{N}.
\end{equation}
The polynomials $\{K_{n}(z,z_0)\}_{n=0}^{\infty}$ are the so-called \textit{kernel polynomials} of $G$, with
respect to $z_0$. They provide the best $L^2(G)$- approximation to $K(\cdot,z_0)$ out of the space $\mathbb{P}_{n}$ of complex polynomials of degree at most $n$.

In accordance with (\ref{eq:4}), the $n$-th BKM approximation to $f_{0}$ is given by
\begin{equation}\label{eq:bib1}
\pi_{n}(z):=\frac{1}{K_{n-1}(z_{0},z_{0})}\int_{z_{0}}^{z}K_{n-1}(\zeta,z_{0})d\zeta,\quad n\in\mathbb{N}.
\end{equation}
This defines the sequence $\{\pi_{n}\}_{n=1}^{\infty}$ of the  \textit{Bieberbach polynomials} of $G$, with respect to $z_0$. The polynomial $\pi_n$ solves the following minimal problem:
Let
\begin{equation*}
\mathbb{P}_{n}^{*}:=\{p:p\in\mathbb{P}_{n},\,\, \textup{with}\,\,
p(z_{0})=0\,\, \textup{and}\,\, p^\prime(z_{0})=1\}.
\end{equation*}
Then, for each $n\in\mathbb{N}$, \textit{the polynomial $\pi_{n}$ minimizes
uniquely the two norms} $\|f_{0}^{\prime}-p'\|_{L^{2}(G)}$ \textit{and} $\|p^\prime\|_{L^{2}(G)}$
\textit{over all} $p\in\mathbb{P}_{n}^{*}$; see e.g.\ \cite[Kap.~III, \S1]{Ga64}.

Regarding the convergence of the method, we note that in cases when $f_{0}$ has an analytic continuation across $\Gamma$,
then this is a consequence of Walsh's theory of maximal convergence \cite[\S4.7, \S5.3]{Wa}. In order to be more specific,
let $\Phi$ denote the conformal map of $\Omega$ onto $\Delta:=\{w: |w|>1\}$, normalized so that near infinity,
\begin{equation}\label{eq:Phiatinf}
\Phi(z)=\gamma z +\gamma_{0}+\frac{\gamma_{1}}{z}+\frac{\gamma_{2}}{z^{2}}+\cdots,\quad\gamma>0.
\end{equation}
Note that $\gamma=1/\textup{cap}(\Gamma)$, where $\textup{cap}(\Gamma)$ denotes the (logarithmic) capacity of
$\Gamma$.
Then,
\begin{equation}\label{error1}
\|f_{0}-\pi_{n}\|_{L^{\infty}(\overline{G})}=O\left(\frac{1}{R^n}\right),
\end{equation}
holds for any $1<R<|\Phi(z_1)|$, but for no $R>|\Phi(z_1)|$, where $z_1$ denotes the nearest singularity of $f_{0}$ in $\Omega$; see also \cite[Ch.\ I]{Gabook87}. (We use $\|\cdot\|_{L^{\infty}(\overline{G})}$ to denote the sup-norm on
$\overline{G}$.)

In cases when $\Gamma$ is piecewise analytic and $f_{0}$ has singularities on $\Gamma$, then Levin, Papamichael and Siderides were the first to observe in \cite{LPS} that the error (\ref{error1}) depends on the boundary singularities of the mapping function $f_{0}$ on $\Gamma$, and also on the singularities of the extension of $f_{0}$ across the segments of $\Gamma$ into $\Omega$. Accordingly, in order to improve the numerical performance of the BKM, they extended the method by orthonormalizing a system of basis functions consisting from monomials, as in the BKM, and also from functions that reflect the dominant singularities of $f_{0}$ on $\Gamma$ and in $\Omega$.  This  extension is known as BKM/AB (AB stands for \textit{augmented basis}). The BKM/AB was used subsequently in \cite{PK81} and \cite{PW86}.

The most precise results regarding the convergence of the BKM are due to D.\ Gaier \cite{Ga98}.
In particular, under the assumption
that $\Gamma$ is piecewise analytic without cusps, Gaier derived the estimate
\begin{equation}\label{eq:6}
\|f_{0}-\pi_{n}\|_{L^{\infty}(\overline{G})}=O(\log n)\frac{1}{n^{s}},
\end{equation}
where $s:=\lambda/(2-\lambda)$ and $\lambda\pi$ $(0<\lambda<2)$ denotes the smallest exterior angle
where two analytic arcs of $\Gamma$ meet. Regarding sharpness of the estimate (\ref{eq:6}), it was shown in
\cite[Thm.\ 4]{Ga88} that there are cases where the exponent $s$ can not be replaced by a smaller number.
However, the factor $\log n$ can be replaced by $\sqrt{\log n}$, see \cite{AG} and \cite[Rem.\ 3.1]{MSS}.
A lower estimate of the form
\begin{equation}\label{eq:MSS3.19}
\|f_{0}-\pi_{n}\|_{L^{\infty}(\overline{G})}\ge c\,\frac{1}{n^{s}},
\end{equation}
provided that $1/(2-\lambda)$ is not a positive integer,
where $c$ is a constant that does not depend on  $n$, was established in \cite[Thm.\ 3.2]{MSS} by Maymeskul, Saff and the second author.

The theoretical justification of the BKM/AB with basis function that reflect the corner singularities of $f_0$
was given in \cite{MSS}, by means of sharp estimates for the associated BKM/AB errors. The purpose of the present paper is to derive theoretical results that justify the use of basis functions that reflect (a) pole singularities of $f_0$ and (b)
both corner and pole singularities of $f_0$. More specifically, we derive upper and lower estimates for the
BKM/AB errors in the case (a), and upper estimates for the BKM/AB errors in the case (b).
In doing so, we complete the task that was put forward by Yu.\ E.\ Khokhlov, reviewer of the introductory paper \cite{LPS} of the BKM/AB in the Mathematical Reviews, who concluded that: \lq\lq\textit{A proof of the convergence of the numerical method given and an investigation of its convergence rate are lacking, so the results obtained are of a heuristic nature}\rq\rq.

The paper is organized as follows: In Section~2 we set up the notation and recall the BKM/AB. Section~3 is devoted to the study of the various BKM and BKM/AB errors, in cases when $f_0$ has an analytic continuation across $\Gamma$, hence only basis functions reflecting poles are used in the BKM/AB. In Section~4, we consider the case when both corner and pole basis functions are included in BKM/AB. Finally, in Section~5, we present numerical computations that illustrate the theory of Sections 3 and 4.

\section{The Bergman kernel method with singular basis functions}\label{sec:2}
\subsection{Corner singularities}\label{subsec:corners}
Throughout this section we assume that the boundary $\Gamma$ of $G$ consist of $N$ analytic arcs that meet at corner points $\tau_{k}$,  $k=1,2,\ldots,N$, where they form interior angles $\alpha_{k}\pi$, $0<\alpha_{k}<2$. Then, we have the following asymptotic expansions for $f_{0}$, valid near $\tau_{k}$:
\begin{itemize}
\item[(i)]
If $\alpha_{k}$ is irrational, then
\begin{equation}\label{eq:lemman}
f_{0}(z)=f_{0}(\tau_{k})+\sum_{p,q}B_{p,q}(z-\tau_{k})^{p+q/\alpha_{k}},
\end{equation}
where $p$ and $q$ run over all integers $p\geq 0$, $q\geq1$ and $B_{0,1}\neq 0$.
\item[(ii)]
If $\alpha_{k}=a/b$, with $a$ and $b$ relative prime numbers, then
\begin{equation}\label{eq:lehman2}
f_{0}(z)=f_{0}(\tau_{k})+\sum_{p,q,m}B_{p,q,m}(z-\tau_{k})^{p+q/\alpha_{k}}(\log(z-\tau_{k}))^{m},
\end{equation}
where $p,q$ and $m$ run over all integers $p\geq 0$, $1\leq q\leq a$, $1\leq m\leq p/b$ and $B_{0,1,0}\neq 0$.
\item[(iii)]
If $\tau_{k}$ is formed by two straight-line segments, then
\begin{equation}\label{eq:lines}
f_{0}(z)=f_{0}(\tau_{k})+\sum_{l=1}^\infty B_{l}(z-\tau_{k})^{l/\alpha_{k}},
\end{equation}
where $B_1\ne 0$. Furthermore, (\ref{eq:lemman}) holds in the case when $\tau_{k}$ is formed by two
circular arcs, or a straight-line and a circular arc.
\end{itemize}
In the above, (i) and (ii) are due to Lehman \cite{Lehman}, while (iii) emerges easily from the reflection principle; see also \cite[\S2.1]{Ga98} and \cite[pp.~6--7]{P1}.

It follows from (iii) that if $G$ is a half-disk or a rectangle, then $f_0$ has a Taylor series expansion valid around each corner, and thus an analytic continuation across $\Gamma$ into $\Omega$. In this case, the only singularities of $f_0$ are simple poles in $\Omega$. This shows that the study of the BKM/AB, even with only pole basis function is important in the applications.

For simplicity in the exposition, we shall assume throughout this paper that \textit{no logarithmic terms occur} in the asymptotic expansion of $f_{0}$ near the corner $\tau_{k},\,\, k=1,2,\ldots,N$. This, for example, will be the case in the expansions (\ref{eq:lemman}) and (\ref{eq:lines}) above. Nevertheless, our method of study
can be adjusted to cover logarithmic singularities as well.

Let $M$ denote the number of corners of $\Gamma$ for which $\alpha_{k}$ is not of the special form $1/m,\,\, m\in\mathbb{N}$. When we present results for corner singularities we shall assume that $M\ge1$. We index such corners by $\tau_{k}$, $k=1,\ldots,M$. That is, if $N>M$, then the mapping function $f_{0}$ has an analytic continuation in some neighborhood of the corner $\tau_{N}$.

For $k=1,\ldots,M$, we denote by $\{\gamma_{j}^{(k)}\}_{j=1}^{\infty}$ the increasing arrangement of the possible powers $p+q/\alpha_{k}$ of $(z-\tau_{k})$ that appear in the asymptotic expansion of $f_{0}(z)$ near $\tau_{k}$.
In particular, if $\tau_{k}$ is formed by two straight-line segments, then $\gamma_{j}^{(k)}=j/\alpha_{k}$, $j=1,2,\ldots$.
Also, if $\alpha_{k}$ is irrational, or the corner $\tau_{k}$ is formed by
two circular arcs, then
\begin{eqnarray*}
\gamma_1^{(k)}&=&1/\alpha_k;\\
\gamma_2^{(k)}&=&1/\alpha_k+\min(1/\alpha_k,1);\\
\gamma_3^{(k)}&=&
\left\{
\begin{array}{ll}
1/\alpha_k+2,\quad&0<\alpha_k<1/2,\\
2/\alpha_k,       &1/2<\alpha_k<1,\\
1/\alpha_k+1,     &1<\alpha_k<2;
\end{array}
\right.
\\
\vdots&
\end{eqnarray*}
\begin{remark}\label{rem:Lehman}
Under the assumption regarding the no-appearance of logarithmic terms, the asymptotic expansion near
$\tau_{k}$, $k=1,2,\ldots,M$, can be written in the form
\begin{equation}\label{eq:lemman2}
f_{0}(z)=\sum_{j=0}^{\infty}a_{j}^{(k)}(z-\tau_{k})^{\gamma_{j}^{(k)}},
\end{equation}
where, $\gamma_{0}^{(k)}:=0$ and $a_{1}^{(k)}\neq0$. Note that, we always have $\gamma_{1}^{(k)}>1/2$, and since $\tau_k$ is not a special corner, $\gamma_{1}^{(k)}\notin\mathbb{N}$. Therefore $(z-\tau_{k})^{\gamma_{1}^{(k)}}$ has an algebraic singularity at $\tau_k$. However, when $\alpha_{k}$ is rational, it is possible that $\gamma_{j}^{(k)}\in \mathbb{N}$, for indices $j\geq 2$, so that
$(z-\tau_{k})^{\gamma_{j}^{(k)}}$ is analytic at $\tau_{k}$.
\end{remark}

\subsection{Pole singularities}\label{subsec:poles}
Since $f_0(z_0)=0$, $z_0\in G$, it follows from the reflection principle for analytic arcs that the extension of $f_0$ across any segment constituting $\Gamma$ would have a pole or a pole-type singularity at the reflected images of $z_0$. For example, if $\Gamma$ consists explicitly from straight-line segments and/or circular arcs, then $f_{0}$ has a simple pole (due to the univalency of $f_0$) at every mirror image of $z_{0}$ (with respect to the straight-lines) and at every geometric inverse of $z_{0}$ (with respect to the circular arcs), that lies in $\Omega$.
More generally, $f_{0}$ may have at points $z_{j}\in\Omega$ a pole, or a poly-type, singularity of the form
\begin{equation}\label{eq:papam}
(z-z_{j})^{-{k_{j}}/{m_{j}}},\quad k_j,m_j\in\mathbb{N}.
\end{equation}
According to \cite[\S~5.1]{PWH}, the following three special cases occur frequently in the applications:
\begin{itemize}
\item[(i)]
$k_j=m_j=1$. In this case, $f_{0}$ has a simple pole at $z_{j}$.
\item[(ii)]
$k_j=2$, $m_j=1$. In this case, $f_{0}$ has a double pole at $z_{j}$.
\item[(iii)]
$k_j=1$, $m_{j}=2$. In this case, $f_{0}$ has a rational pole singularity at $z_{j}$.
\end{itemize}

In order to describe the BKM/AB, we assume that the nearest singularities of $f_{0}$ in $\Omega$ are poles or rational poles, of the form (\ref{eq:papam}) at points $z_{j}$, $j=1,2,\ldots \kappa$, where
$|\Phi(z_{1})|\le|\Phi(z_{2})|\le\ldots\le|\Phi(z_{\kappa})|$ and that the other singularities of $f_{0}$ in $\Omega$
occur at points $z_{\kappa+1},z_{\kappa+2},\ldots$, where
$|\Phi(z_k)|<|\Phi(z_{\kappa+1})|\le|\Phi(z_{\kappa+2})|\le\ldots$.
\subsection{BKM/AB}
Using the above notation, the BKM/AB with $n$ monomials, $\kappa$ poles and $p_{k}$ corner singularities at each (non-special) corner $\tau_{k}$, $k=1,2\ldots,M$, can be summarized as follows:
\begin{itemize}
\item[(i)]
Start with the augmented system $\{\eta_{j}\}$ consisting of:
\begin{enumerate}
\item
the nearest poles or rational poles, i.e., for $j=1,2\ldots \kappa$,
\begin{equation}\label{eq:7}
\eta_{j}(z)=\bigg[\bigg(\frac{1}{z-z_{j}}\bigg)^{k_{j}/m_{j}}\bigg]';
\end{equation}
\item
the dominant $r_{M}:=\sum_{k=1}^{M}p_{k}$ algebraic singular functions, i.e., for each non-special corner
$\tau_k$, $k=1,2,\ldots,M$,
\begin{equation}\label{eq:8}
\eta_{\kappa+j}(z)=[(z-\tau_{k})^{\gamma_{j}^{(k)}}]',\quad j=1,2,\ldots p_{k};
\end{equation}
\item
the monomials
\begin{equation}\label{eq:9}
\eta_{\kappa+r_{M}+j}(z)=(z^j)^\prime,\quad j=1,2,\ldots,n.
\end{equation}
\end{enumerate}
(As it was noted in Remark~\ref{rem:Lehman}, it might be possible that $\gamma_j^{(k)}\in\mathbb{N}$. If this happens, we avoid
redundancy in the basis by omitting such $\gamma_j^{(k)}$.)
\item [(ii)]
Orthonormalize $\{\eta_{j}\}$, by means of the Gram-Schmidt process
to produce the orthonormal set $\{\widetilde{P}_{j}\}$, where
\begin{equation}\label{eq:10}
\widetilde{P}_{j}(z)=\sum_{i=1}^{j}b_{j,i}\,\eta_{i}(z),\quad b_{j,j}>0.
\end{equation}
\item [(iii)]
Approximate $K(z,z_{0})$ by its finite Fourier expansion with respect to  $\{\widetilde{P}_{j}\}$:
\begin{equation}\label{eq:ker}
\widetilde{K}_{n}(z,z_{0}):=\sum_{j=1}^{\kappa+r_{M}+n}\overline{\widetilde{P}_{j}(z_{0})}
\widetilde{P}_{j}(z)=\sum_{j=1}^{\kappa+r_{M}+n}d_{n,j}\eta_{j}(z).
\end{equation}
\item [(iv)]
Approximate  $f_{0}(z)$ by
\begin{equation}\label{eq:11}
\widetilde{\pi}_{n+1}(z):=\frac{1}{\widetilde{K}_{n}(z_{0},z_{0})}
\int_{z_{0}}^{z}\widetilde{K}_{n}(\zeta,z_{0})d\zeta=\sum_{j=1}^{\kappa+r_{M}+n}c_{n,j}\mu_{j}(z),
\end{equation}
where
\begin{equation}\label{eq:mudef}
\mu_j(z):=
\int_{z_{0}}^{z}\eta_j(\zeta)d\zeta.
\end{equation}
\end{itemize}

We call the functions $\{\widetilde{P}_{j}\}$ the \emph{augmented Bergman polynomials} of $G$,
with respect to $\{\eta_{j}\}$, and the functions $\{\widetilde{\pi}_n\}$ the \textit{augmented Bieberbach polynomials} over the system $\{\mu_j\}$.
Clearly, $\widetilde{\pi}_{n}(z_{0})=0$ and $\widetilde{\pi}^{\prime}_n(z_{0})=1$, $n\in\mathbb{N}$.
Note that $\{\widetilde{P}_{j}\}_{j=1}^{\infty}$ forms a complete orthonormal system in $L_a^2(G)$. Consequently,
\begin{equation}\label{eq:Kseriestil}
K(z,z_{0})=\sum_{j=1}^\infty\overline{\widetilde{P}_{j}(z_{0})}
\widetilde{P}_{j}(z),
\end{equation}
locally uniformly with respect to $z\in G$, cf.\ (\ref{eq:Kseries}).

We conclude this section by presenting a result which shows that the two errors
$\|f_{0}^{\prime}-\widetilde{\pi}_{n+1}^{\prime}\|_{L^{2}(G)}$ and
$\|K(\cdot,z_{0})-\widetilde{K}_{n}(\cdot,z_{0})\|_{L^{2}(G)}$ are of the same order.
This fact will be used below in Sections $3$ and $4$.

In what follows we denote by  $c$, $c_1$, $c_2,\dots$, constants that are independent of $n$.
For quantities $A>0$, $B>0$, we use the notation $A\ole B$ (inequality with respect to the order) if
$A\leq cB$. The expression $A\asymp B$ means that $A\ole B$ and $B\ole A$
simultaneously.

\begin{lemma}\label{l:1}
It holds that,
\begin{equation}\label{eq:st1}
\|f_{0}^{\prime}-\widetilde{\pi}_{n+1}^{\prime}\|_{L^{2}(G)}\asymp
\|K(\cdot,z_{0})-\widetilde{K}_{n}(\cdot,z_{0})\|_{L^{2}(G)}.
\end{equation}
\end{lemma}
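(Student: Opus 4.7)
The plan is to exploit the explicit representations $f_0'(z)=K(z,z_0)/K(z_0,z_0)$ from (1.4) and the analogous identity $\widetilde{\pi}_{n+1}'(z)=\widetilde{K}_n(z,z_0)/\widetilde{K}_n(z_0,z_0)$, obtained by differentiating (2.13), and then to pass between the two differences by simple algebraic manipulation. Let me write, for brevity, $K_0:=K(z_0,z_0)$ and $\widetilde{K}_{n,0}:=\widetilde{K}_n(z_0,z_0)$.

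The key auxiliary facts I would establish first are: (a) the reproducing property (1.3) applied to $K(\cdot,z_0)$ and to $\widetilde{K}_n(\cdot,z_0)$ yields $\|K(\cdot,z_0)\|_{L^2(G)}^2=K_0$ and $\|\widetilde{K}_n(\cdot,z_0)\|_{L^2(G)}^2=\widetilde{K}_{n,0}$, the latter following directly from the orthonormality of $\{\widetilde{P}_j\}$ in the definition (2.13); (b) since $\widetilde{K}_n(\cdot,z_0)$ is the orthogonal projection of $K(\cdot,z_0)$ onto the span of $\widetilde{P}_1,\dots,\widetilde{P}_{\kappa+r_M+n}$, the Bessel/Pythagoras identity gives
\begin{equation*}
\|K(\cdot,z_0)-\widetilde{K}_n(\cdot,z_0)\|_{L^2(G)}^2=K_0-\widetilde{K}_{n,0};
\end{equation*}
(c) by the completeness of $\{\widetilde{P}_j\}$ stated in (2.15), $\widetilde{K}_{n,0}\nearrow K_0>0$, so for all $n$ large enough $\widetilde{K}_{n,0}\ge K_0/2$, which uniformly bounds any appearance of $1/\widetilde{K}_{n,0}$.

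For the upper estimate in (2.17) I would write the algebraic identity
\begin{equation*}
f_0'(z)-\widetilde{\pi}_{n+1}'(z)=\frac{K(z,z_0)-\widetilde{K}_n(z,z_0)}{K_0}+\widetilde{K}_n(z,z_0)\,\frac{\widetilde{K}_{n,0}-K_0}{K_0\,\widetilde{K}_{n,0}},
\end{equation*}
take $L^2(G)$-norms, apply the triangle inequality, and use $\|\widetilde{K}_n(\cdot,z_0)\|_{L^2(G)}=\sqrt{\widetilde{K}_{n,0}}$ together with the identity from (b) to convert the second summand into a multiple of $\|K-\widetilde{K}_n\|_{L^2(G)}^2$, which is $o(\|K-\widetilde{K}_n\|_{L^2(G)})$. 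For the reverse inequality I rearrange the same identity to
\begin{equation*}
K(z,z_0)-\widetilde{K}_n(z,z_0)=K_0\bigl(f_0'(z)-\widetilde{\pi}_{n+1}'(z)\bigr)+(K_0-\widetilde{K}_{n,0})\,\widetilde{\pi}_{n+1}'(z),
\end{equation*}
take norms, note $\|\widetilde{\pi}_{n+1}'\|_{L^2(G)}=1/\sqrt{\widetilde{K}_{n,0}}$, and absorb the quadratic term $\|K-\widetilde{K}_n\|_{L^2(G)}^2$ into the left-hand side for $n$ large.

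I do not expect a serious obstacle here; the only point requiring a bit of care is that the two constants implicit in $\asymp$ are genuinely independent of $n$, which is guaranteed only once $n$ is large enough for $\widetilde{K}_{n,0}$ to be bounded away from $0$ and for the quadratic remainder $\|K-\widetilde{K}_n\|_{L^2(G)}^2$ to be dominated by the linear term $\|K-\widetilde{K}_n\|_{L^2(G)}$. The small-$n$ range is handled trivially, since there are only finitely many such $n$ and each produces nonzero, finite errors.
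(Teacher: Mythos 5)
Your argument is correct and rests on exactly the ingredients the paper uses: the normalized representations $f_0'=K(\cdot,z_0)/K(z_0,z_0)$ and $\widetilde{\pi}_{n+1}'=\widetilde{K}_n(\cdot,z_0)/\widetilde{K}_n(z_0,z_0)$, together with the Pythagoras identity $\|K(\cdot,z_0)-\widetilde{K}_n(\cdot,z_0)\|_{L^2(G)}^2=K(z_0,z_0)-\widetilde{K}_n(z_0,z_0)$. The only difference is one of execution: instead of your triangle-inequality splitting with absorption of the quadratic remainder, the paper carries the Parseval computation through to the exact identity $\|f_0'-\widetilde{\pi}_{n+1}'\|_{L^2(G)}^2=\|K(\cdot,z_0)-\widetilde{K}_n(\cdot,z_0)\|_{L^2(G)}^2\big/\big(K(z_0,z_0)\,\widetilde{K}_n(z_0,z_0)\big)$ and then bounds $\widetilde{K}_n(z_0,z_0)$ from below by $|\widetilde{P}_1(z_0)|^2$, which removes the need for the large-$n$/small-$n$ case split and also disposes of the degenerate case where both errors vanish simultaneously, a case your closing sentence overlooks (though harmlessly, since the two errors vanish together).
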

\begin{proof} We set $m:=\kappa+r_M+n$ and note that (\ref{eq:5}), (\ref{eq:ker})--(\ref{eq:Kseriestil}), imply:
\begin{equation}
\begin{alignedat}{2}\nonumber
f^{\prime}_{0}(z)-\widetilde{\pi}^{\prime}_{n+1}(z)=&\,
\pi r_{0}^{2}\sum_{j=1}^{\infty}\overline{\widetilde{P}_{j}(z_{0})}\widetilde{P}_{j}(z)
-\{\sum_{j=1}^{m}|\widetilde{P}_{j}(z_{0})|^{2}\}^{-1}
\sum_{j=1}^{m}\overline{\widetilde{P}_{j}(z_{0})}\widetilde{P}_{j}(z)\\
=&\sum_{j=1}^{m}\big[\pi r_{0}^{2}-\{\sum_{j=1}^{m}|\widetilde{P}_{j}(z_{0})|^{2}\}^{-1}\big]
\overline{\widetilde{P}_{j}(z_{0})}\widetilde{P}_{j}(z)\\
+&\pi r_{0}^{2}\sum_{j=m+1}^{\infty}\overline{\widetilde{P}_{j}(z_{0})}\widetilde{P}_{j}(z).
\end{alignedat}
\end{equation}
Therefore, by using the orthonormality of $\widetilde{P}_{j}$ we see that,
$$
\|f_{0}^{\prime}-\widetilde{\pi}_{n+1}^{\prime}\|_{L^{2}(G)}^{2}=
\sum_{j=1}^{m}\big[\pi r_{0}^{2}-1/\widetilde{K}_{n}(z_{0},z_{0})\big]^{2}|
\widetilde{P}_{j}(z_{0})|^{2}+(\pi r_{0}^{2})^{2}\sum_{j=m+1}^{\infty}|\widetilde{P}_{j}(z_{0})|^{2}.
$$
Now, using once more (\ref{eq:5}), we obtain, after some trivial calculation, that
$$
\|f_{0}^{\prime}-\widetilde{\pi}_{n+1}^{\prime}\|_{L^{2}(G)}^{2}=
\left[K(z_{0},z_{0})-\widetilde{K}_{n}(z_{0},z_{0})\right]
\left[K(z_{0},z_{0})\,\widetilde{K}_{n}(z_{0},z_{0})\right]^{-1}.
$$
This and (\ref{eq:3}), with $g(\cdot)=K(\cdot,z_{0})-\widetilde{K}_{n}(\cdot,z_{0})$, leads to
\begin{equation}\label{eq:12}
\|f_{0}^{\prime}-\widetilde{\pi}_{n+1}^{\prime}\|_{L^{2}(G)}^{2}=
\frac{\|K(\cdot,z_{0})-\widetilde{K}_{n}(\cdot,z_{0})\|_{L^{2}(G)}^{2}}{K(z_{0},z_{0})\,
\widetilde{K}_{n}(z_{0},z_{0})}
\end{equation}
and the result (\ref{eq:st1}) follows from the set of the obvious inequalities,
$$
|\widetilde{P}_{1}(z_{0})|=\widetilde{K}_{1}(z_{0},z_{0})\leq\widetilde{K}_{n}(z_{0},z_{0})
\leq K(z_{0},z_{0})=(\pi r_0^2)^{-1},
$$
with constants depending on $r_0$ and $|\widetilde{P}_{1}(z_{0})|$ only.
\end{proof}
\begin{remark}\label{rem:lemma2.1}
It is clear from the proof that the result of Lemma~\ref{l:1} holds true for \textit{any} complete orthonormal system. We note that for the system $\{\widetilde{P}_{j}\}_{j=1}^\infty$ to be complete it suffices that $\Gamma$
is a bounded Jordan curve. In particular, (\ref{eq:st1}) holds with $\pi_{n+1}$ and $K_n$  in the place of $\widetilde{\pi}_{n+1}$ and $\widetilde{K}_n$.
\end{remark}

\section{BKM/AB with pole singularities}\label{sec:3}
In this section we study the BKM and BKM/AB errors
$\|f_{0}-{\pi}_{n}\|_{L^{\infty}(\overline{G})}$ and $\|f_{0}-\widetilde{\pi}_{n}\|_{L^{\infty}(\overline{G})}$,
under the assumption that $f_0$  \textit{has an analytic continuation across} $\Gamma$ in $\Omega$ and its only singularities are poles, or rational poles, of the type (\ref{eq:papam}). More precisely, we refine the classical result (\ref{error1}) for the BKM error, and at the same time we obtain a lower estimate for it. Furthermore, we establish upper and lower estimates for the BKM/AB error. The lower estimates and the refinement are obtained by exploiting the assumption regarding the singularities of $f_0$ and by using certain important results of E.B.\ Saff on polynomial interpolation of meromorphic functions \cite{Sa69}. Since the results of \cite{Sa69} were established for domains with smooth boundaries, we show in the next lemma that they hold true for domains with corners.

In order to do so, we use the \textit{Faber polynomials}  $\{F_n\}_{n=0}^\infty$
of $\overline{G}$. We recall that $F_n(z)$ is defined as the polynomial part of the Laurent series expansion
of $\Phi^n$ at infinity, i.e.,
\begin{equation}\label{faber-def}
F_{n}(z)-\Phi^{n}(z)=O\bigg(\frac{1}{z}\bigg),\mbox{ as }z\rightarrow\infty.
\end{equation}
This, in view of (\ref{eq:Phiatinf}), gives $F_n\in\mathbb{P}_n$ and
\begin{equation}
F_{n}(z)=\gamma^nz^{n}+\cdots.
\end{equation}

Let $L_R$ ($R\ge 1$), denote the level curve of index $R$ of $\Phi$,  i.e.,
\begin{equation}\label{eq:LR}
L_{R}:=\{z:|\Phi(z)|=R\},
\end{equation}
so that $L_1\equiv\Gamma$. Note that $L_R$, for $R>1$, is an analytic Jordan curve. We use $G_R$ to denote its interior, i.e.,
$
G_R:=\textrm{int}(L_R).
$
The following result gives the exact rate of convergence of the minimum uniform error in approximating  meromorphic functions by polynomials.
\begin{lemma}\label{l:2}
Assume that the boundary $\Gamma$ of $G$ is piecewise Dini-smooth and consider a function $f$ which is analytic on
$\overline{G_\varrho}$, for some $\varrho>1$, apart from a finite number of poles on $L_\varrho$. Let $m$ denote the highest order of the poles of $f$ on $L_\varrho$. Then,
\begin{equation}\label{eq:15}
\inf_{p\in\mathbb{P}_{n}}\|f-p\|_{L^{\infty}(\overline{G})}
\asymp\frac{n^{m-1}}{\varrho^{n}}.
\end{equation}
\end{lemma}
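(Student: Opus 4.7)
The plan is to adapt Saff's proof in \cite{Sa69}---which established \eqref{eq:15} when $\Gamma$ is Dini-smooth---to the piecewise Dini-smooth setting, by verifying that the Faber-polynomial tools used there continue to work in the presence of corners. I would start with the decomposition $f = R + g$, where $R$ is the sum of the principal parts of the Laurent expansions of $f$ at its poles on $L_\varrho$ and $g := f - R$. By construction $R$ is a rational function with poles only on $L_\varrho$ and of maximal order $m$, while $g$ extends analytically to some $G_{\varrho'}$ with $\varrho' > \varrho$.

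For the upper bound I would treat $g$ and $R$ separately. For $g$, standard maximal-convergence estimates (valid for piecewise Dini-smooth $\Gamma$, see \cite{Ga98}) give $\inf_{p\in\mathbb{P}_n}\|g-p\|_{L^\infty(\overline{G})} = O(\varrho'^{-n}) = o(n^{m-1}/\varrho^n)$. For $R$, I would expand each term $1/(w_j - z)^k$, with $|\Phi(w_j)| = \varrho$, using the Faber generating relation
\[
\frac{\Phi'(w)}{\Phi(w) - \Phi(z)} = \sum_{n=0}^{\infty}\frac{F_n(z)}{\Phi(w)^{n+1}},\qquad z\in G_\varrho,\ w\in L_\varrho,
\]
and its derivatives in $w$ up to order $k-1$; this produces a Faber expansion of $1/(w_j - z)^k$ with coefficients of order $n^{k-1}/\varrho^n$. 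Truncating at degree $n$ and invoking the standard bound $\|F_n\|_{L^\infty(\overline{G})} = O(\log n)$ for piecewise Dini-smooth cusp-free boundaries yields a remainder of order $n^{m-1}\log n/\varrho^n$; the extraneous logarithm is eliminated by Abel summation, in the spirit of \cite{Sa69}.

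For the lower bound I would dualize against the Faber coefficient functional
\[
c_n(h) := \frac{1}{2\pi i}\oint_\Gamma h(w)\,\Phi'(w)\,\Phi(w)^{-n-1}\,dw,
\]
which is well defined for any bounded $h$ on $\Gamma$, since $\Phi'$ is integrable on $\Gamma$ (indeed $\oint_\Gamma|\Phi'(w)|\,|dw|$ equals the arclength of the unit circle, $2\pi$). A residue computation---obtained by deforming this contour outward past each pole of $R$ on $L_\varrho$, noting that the integral over $L_{R'}$ tends to $0$ as $R' \to \infty$---produces $|c_n(f)| \asymp n^{m-1}/\varrho^n$. The change of variables $u = \Phi(w)$, combined with the Laurent expansion $F_k(\Phi^{-1}(u)) = u^k + O(1/u)$ valid for $|u| \ge 1$, yields $c_n(F_k) = \delta_{n,k}$; hence $c_{n+1}(p) = 0$ for every $p \in \mathbb{P}_n$. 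Since trivially $|c_{n+1}(f-p)| \le \|f - p\|_{L^\infty(\overline{G})}$, we get
\[
\|f-p\|_{L^\infty(\overline{G})} \;\ge\; |c_{n+1}(f)| \;\asymp\; n^{m-1}/\varrho^n,
\]
and the lower bound follows upon taking the infimum over $p \in \mathbb{P}_n$.

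The main obstacle is the upper bound: removing the $\log n$ factor from the K\"ovari--Pommerenke estimate on $\|F_n\|_{L^\infty(\overline{G})}$ (an artifact of corners, absent in Saff's Dini-smooth setting) requires a summation-by-parts argument that exploits partial cancellation in the tail of the Faber series. The lower bound is, by contrast, essentially free: it uses only the rectifiability of $\Gamma$ and a direct dualisation against the Faber basis of $\mathbb{P}_n$.
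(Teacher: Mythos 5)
Your overall strategy (Faber expansions for the upper bound, duality against Faber coefficient functionals for the lower bound) is in the spirit of the paper's proof, which also transplants Saff's argument to the corner setting via Faber polynomials. But there are two genuine gaps. First, for the upper bound you start from $\|F_n\|_{L^{\infty}(\overline{G})}=O(\log n)$ and then hope to remove the logarithm by an unspecified Abel-summation argument; you yourself flag this as the main obstacle, and it is not resolved. The ingredient that actually makes the argument work is Gaier's theorem \cite{Ga99} that for piecewise Dini-smooth $\Gamma$ the Faber polynomials are \emph{uniformly bounded} on $\overline{G}$, i.e.\ $\|F_n\|_{L^{\infty}(\overline{G})}\leq c(\Gamma)$. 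With that bound, combined with $F_n(t)=\Phi^n(t)\{1+o(1)\}$ on $L_\varrho$, one gets $|F_n(z)|/|F_n(t)|\preceq\varrho^{-n}$ for $z\in\overline{G}$, $t\in L_\varrho$, and Saff's proof of his Theorem~2 goes through verbatim with $\{F_n\}$ as the basis; no logarithm ever appears, so there is nothing to remove.

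Second, your lower bound is incorrect as stated. The residue computation gives $c_n(f)=\sum_j P_j(n)\,\Phi(w_j)^{-n}+o(\varrho^{-n})$, where the $P_j$ are polynomials of degree $m_j-1$ with nonzero leading coefficients; when $L_\varrho$ carries more than one pole, the unimodular phases $(\Phi(w_j)/\varrho)^{-n}$ can cancel, so $|c_{n+1}(f)|\asymp n^{m-1}\varrho^{-n}$ fails for infinitely many $n$. (Already for $G=\mathbb{D}$ and $f(z)=(1-z^{2}/\varrho^{2})^{-1}$ every other coefficient vanishes, so the single-coefficient bound $\inf_p\|f-p\|\geq|c_{n+1}(f)|$ gives nothing for half of all $n$.) To repair this you would need to extract a lower bound from a block of consecutive coefficients $c_{n+1},\dots,c_{n+K}$, e.g.\ via a Tur\'an power-sum inequality, using the monotonicity of $\inf_{p\in\mathbb{P}_n}\|f-p\|$ in $n$. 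The paper avoids the issue entirely by quoting Saff's Theorem~10, whose hypotheses are verified here because $\Omega$ is simply connected and hence its Green function with pole at infinity has no critical points; that theorem already delivers the lower bound $n^{m-1}\varrho^{-n}$ for every $n$.
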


A curve $\Gamma$ is piecewise Dini-smooth if it consists of a finite number of Dini-smooth arcs.
An arc $z=z(s)$, where $s\in[a,b]$ stands for the arclength, is called \textit{Dini-smooth} if $z^{\prime}(s)$ is continuous on $[a,b]$, and if $z^{\prime}(s)$ has a modulus of continuity $\omega$ which satisfies
$\int_{0}^{\alpha}\left[{\omega(t)}/{t}\right]\,dt<\infty$, for some $\alpha>0$.
We note, in particular, that a piecewise Dini-smooth curve may have corners or cusps and
that a piecewise analytic Jordan curve is also piecewise Dini-smooth.
\begin{proof}
We recall the following two facts regarding Faber polynomials:
\begin{itemize}
\item[(i)]
For any $r$, $R$, with $1<r<R$, it holds
\begin{equation}\label{eq:suetin}
F_{n}(z)=\Phi^{n}(z)\bigg\{1+O\bigg(\frac{r^{n}}{R^{n}}\bigg)\bigg\},\quad z\in L_{R},
\end{equation}
see e.g.\ \cite[p.~43]{Su98}.
\item[(ii)]
Under the assumption on $\Gamma$, the Faber polynomials are uniformly bounded on $\overline{G}$ (see \cite{Ga99}), i.e.,
\begin{equation}\label{eq:M8}
\|F_n\|_{L^{\infty}(\overline{G})}\leq c(\Gamma),\quad n\in\mathbb{N},
\end{equation}
where $c(\Gamma)$ is a positive constant that depends on $\Gamma$ only.
\end{itemize}

Observe that  $(\ref{eq:suetin})$ implies that the sequence $\{F_{n}(z)\}_{n=1}^{\infty}$ has no limit point of zeros exterior to $\overline{G}$.
Also, from $(\ref{eq:suetin})$ and $(\ref{eq:M8})$ we have for $z\in\overline{G}$ and $t\in L_\varrho$ that,
\begin{equation}
\frac{|F_{n}(z)|}{|F_{n}(t)|}\le\frac{c_1(\Gamma)}{\varrho^{n}}, \quad n\in\mathbb{N}.
\end{equation}
Now, following the proof of Theorem $2$ of \cite{Sa69} and using the sequence of the Faber polynomials $\{F_{n}\}$ in the place of $\{\omega_{n}\}$, we conclude that there exist polynomials $\{p_{n}\}_{n=1}^\infty$, such that
\begin{equation}
\|f-p_{n}\|_{L^{\infty}(\overline{G})}\leq c_2(\Gamma)\frac{n^{m-1}}{\varrho^{n}}, \quad n\in\mathbb{N},
\end{equation}
see also \cite[p.~399]{SaSt08}.
This yields the upper bound in (\ref{eq:15}). The lower bound follows at once from Theorem~10 of \cite{Sa69}, by observing that
$\Omega$ is simply-connected and hence its Green function with pole at infinity has no critical points.
\end{proof}

The following result is the so-called Andrievskii's lemma for polynomials and rational polynomials.
Its proof, for bounded Jordan domains such that the inverse conformal map $g:\mathbb{D}\to G$ satisfies a Lipschitz condition on $\overline{\mathbb{D}}$,  can be found in \cite{Ga87}. This condition is certainly satisfied by the type of domains considered below.
\begin{lemma}\label{lem:andri-gaier}
Assume that $\Gamma$ is piecewise analytic without cusps. Then:
\begin{itemize}
\item[(i)]
For any $P_n\in\mathbb{P}_n$, with $P_n(z_0)=0$, it holds
\begin{equation}\label{eq:andri-lemma}
\|P_n\|_{L^{\infty}(\overline{G})}\ole \sqrt{\log n}\,\|P_n^\prime\|_{L^{2}({G})},\quad n\ge 2.
\end{equation}
\item[(ii)]
For any $P_n\in\mathbb{P}_n$, with $P_n(z_0)=0$, and $q$ a fixed polynomial with no zeros on $\overline{G}$, it holds that
\begin{equation}\label{eq:andri-gaier-lemma}
\|P_n/q\|_{L^{\infty}(\overline{G})}\ole \sqrt{\log n}\,\|(P_n/q)^\prime\|_{L^{2}({G})},\quad n\ge 2.
\end{equation}
\end{itemize}
\end{lemma}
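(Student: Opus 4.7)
The proof I propose follows the strategy of Gaier~\cite{Ga87}: reduce the estimate to a dyadic mean-value calculation on the unit disk, transported back to $G$ via the conformal map $g:\mathbb{D}\to G$ with $g(0)=z_0$. The hypothesis that $\Gamma$ is piecewise analytic without cusps guarantees that $g$ and its inverse $\phi:=g^{-1}$ extend continuously to the closures, that the distance relation $1-|\phi(z)|\asymp\mathrm{dist}(z,\partial G)$ holds on $\overline G$, and that the area-preserving change of variables $z=g(w)$ gives
\begin{equation*}
\|P_n'\|_{L^2(G)}^2=\int_{\mathbb{D}}|P_n'(g(w))|^2|g'(w)|^2\,dA(w).
\end{equation*}

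For part (i), fix $z=g(w)\in\overline G$ and use $P_n(z_0)=0$ to write
\begin{equation*}
P_n(z)=\int_0^1 P_n'(g(tw))\,g'(tw)\,w\,dt.
\end{equation*}
I split $[0,1]$ into dyadic intervals $I_k:=[1-2^{-k},\,1-2^{-k-1}]$ for $k=0,1,\dots,K$ with $K:=\lfloor\log_2 n\rfloor$, plus a residual $I_\infty:=[1-2^{-K-1},\,1]$ of length $\ole 1/n$. On each $I_k$, the distance estimate above gives $\mathrm{dist}(g(tw),\partial G)\asymp 2^{-k}$, so the area mean-value property for the analytic function $P_n'$ on a disk $B_k\subset G$ of radius $\asymp 2^{-k}$ centered at $g(tw)$ yields $|P_n'(g(tw))|\ole 2^k\,\|P_n'\|_{L^2(B_k)}$. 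Combining this with $|I_k|\asymp 2^{-k}$ and the change of variables controls the contribution of $I_k$ by $\|P_n'\|_{L^2(B_k)}$, and since the disks $B_k$ along the radius have bounded overlap, Cauchy--Schwarz in the $k$-sum gives
\begin{equation*}
\sum_{k=0}^{K}\int_{I_k}|P_n'(g(tw))|\,|g'(tw)|\,dt\ole\sqrt{K+1}\,\|P_n'\|_{L^2(G)}\asymp\sqrt{\log n}\,\|P_n'\|_{L^2(G)}.
\end{equation*}
The residual interval $I_\infty$ is handled by extending $P_n$ to the level domain $G_{1+1/n}$ via Bernstein--Walsh, which keeps $\|P_n\|_{L^\infty(G_{1+1/n})}\asymp\|P_n\|_{L^\infty(\overline G)}$; the mean-value inequality for $P_n'$ on disks of radius $\asymp 1/n$ then contributes a term of order $\|P_n'\|_{L^2(G)}$, absorbed into the main estimate.

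Part (ii) is proved by running the same argument with $R:=P_n/q$ in place of $P_n$. Since $q$ has no zeros on $\overline G$, $R$ is analytic in a neighborhood of $\overline G$ and $R(z_0)=0$. The dyadic step uses only analyticity and the mean-value property for $|R'|^2$, so it applies verbatim to $R$. The only new ingredient is a Bernstein--Walsh-type bound for $R$ at scale $1/n$: on $L_{1+1/n}$,
\begin{equation*}
|R(z)|\le\frac{|P_n(z)|}{|q(z)|}\ole\|P_n\|_{L^\infty(\overline G)}\le\|q\|_{L^\infty(\overline G)}\,\|R\|_{L^\infty(\overline G)},
\end{equation*}
using that $|q|$ stays bounded below on $L_{1+1/n}$ for large $n$. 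The main technical obstacle is precisely this boundary-scale step, which couples the dyadic radial decomposition in $\mathbb{D}$ with the tubular geometry of $G$ near $\partial G$ down to scale $1/n$, and which relies on the piecewise-analytic-without-cusps hypothesis through the Lipschitz/distance bounds on $\phi$; the zero-freeness of $q$ on $\overline G$ transfers to $\overline{G_{1+1/n}}$ for $n$ large by continuity, producing a constant in (2.15) that depends on $q$ but not on $n$.
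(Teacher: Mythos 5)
The paper gives no proof of this lemma: both parts are quoted with a pointer to Gaier \cite{Ga87}, whose argument sets $h=P_n\circ g$, uses the conformal invariance of the Dirichlet integral together with the reproducing--kernel bound $|h(w)|\le\pi^{-1/2}\bigl(\log\frac{1}{1-|w|^2}\bigr)^{1/2}\|h'\|_{L^2(\mathbb{D})}$ at radius $|w|=1-n^{-s}$, and then passes from the level set $g(\{|w|=1-n^{-s}\})$ to $\overline G$ by Bernstein--Walsh plus the H\"older continuity of $g$ and $\Phi$. Your dyadic sum over $K\asymp\log n$ scales, closed by Cauchy--Schwarz, is a legitimate unpacking of the same $\sqrt{\log n}$ gain, so the overall strategy is sound; but two steps fail as written.

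First, the relation $1-|\phi(z)|\asymp\mathrm{dist}(z,\partial G)$ is false for every domain with a corner of opening $\alpha\pi\ne\pi$: along the bisector at such a corner $1-|\phi(z)|\asymp|z-\tau|^{1/\alpha}$ while $\mathrm{dist}(z,\partial G)\asymp|z-\tau|$. \lq\lq Piecewise analytic without cusps\rq\rq\ gives only H\"older continuity of $g$ and $\phi$, not $|g'|\asymp1$. Consequently $\mathrm{dist}(g(tw),\partial G)$ on $I_k$ is \emph{not} $\asymp2^{-k}$; near a re-entrant corner ($\alpha>1$) it is $\asymp2^{-k\alpha}\ll2^{-k}$, so your disk $B_k$ of radius $\asymp2^{-k}$ is not contained in $G$ and $\|P_n'\|_{L^2(B_k)}$ is not controlled by $\|P_n'\|_{L^2(G)}$; near a convex corner ($\alpha<1$) the bound $\int_{I_k}|g'(tw)|\,dt\ole2^{-k}$ fails instead. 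The step is repairable only by replacing $2^{-k}$ with the true distance $d_k\asymp(1-t)|g'(tw)|$ from Koebe's distortion theorem, after which the product $d_k^{-1}\cdot\mathrm{length}\,(g(I_kw))\ole1$ balances and the dyadic computation, including the bounded-overlap Cauchy--Schwarz, survives. Second, the residual interval is where the polynomial degree actually enters, and you have not carried it out: the disks of radius $\asymp1/n$ there protrude from $G$, so the mean-value estimate necessarily brings in $\|P_n\|_{L^\infty(\overline{G_{1+1/n}})}$, i.e.\ a multiple of $\|P_n\|_{L^\infty(\overline G)}$ itself, times a negative power of $n$ governed by $\mathrm{dist}(\Gamma,L_{1+1/n})$ (which depends on the corner angles through the H\"older exponent of $\Phi$). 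One must close with an absorption step of the form $M\le c\sqrt{\log n}\,\|P_n'\|_{L^2(G)}+\varepsilon_nM$, and to make $\varepsilon_n<1$ the cut must in general sit at $1-n^{-s}$ with $s$ large depending on the angles (harmless for the $\sqrt{\log n}$ count), not at $1-1/(2n)$. Part (ii) inherits both issues; your one genuinely new observation there --- that the Bernstein--Walsh control transfers to $P_n/q$ because $q$ is bounded away from zero on a neighborhood of $\overline G$ --- is correct and is indeed the only extra ingredient that case requires.
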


\subsection{BKM}
The next theorem complements  the classical result (\ref{error1}) of Walsh, in the sense that it provides a lower estimate and, in addition, uses the precise $\varrho=|\Phi(z_1)|$ in the denominator, instead of any $R$, with $1<R<\varrho$. This is done by utilizing extra information on the nature of the singularities of $f_0$ in $\Omega$.
\begin{theorem}\label{th:1}
Assume that $\Gamma$ is piecewise analytic without cusps. Assume further that the conformal map $f_{0}$ has an analytic continuation across $\Gamma$, such that $f_0$ is analytic on $\overline{G_\varrho}$, for some $\varrho>1$, apart from a finite number of poles on $L_\varrho$. Let $m$ denote the highest order of the poles of $f_0$ on $L_\varrho$. Then,
\begin{equation}\label{eq:st2}
\frac{n^{m-1}}{\varrho^{n}}
\preceq\|f_{0}-\pi_{n}\|_{L^{\infty}(\overline{G})}
\preceq\frac{n^{m}\sqrt{\log n}}{\varrho^{n}},\quad n\ge 2.
\end{equation}
\end{theorem}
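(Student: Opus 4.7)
The plan is to exploit $\pi_n\in\mathbb{P}_n^{*}\subset\mathbb{P}_n$, which forces
\[
\|f_0-\pi_n\|_{L^{\infty}(\overline{G})}\ge\inf_{p\in\mathbb{P}_n}\|f_0-p\|_{L^{\infty}(\overline{G})}.
\]
Applying Lemma~\ref{l:2} directly to $f_0$, which under the hypotheses is analytic on $\overline{G_\varrho}$ apart from poles of maximal order $m$ on $L_\varrho$, produces the required estimate $\oge n^{m-1}/\varrho^n$. This half is essentially free.

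\textbf{Upper bound: building a competitor.} For the upper half, the first step is to approximate the derivative $f_0'$, which inherits poles of order at most $m+1$ on $L_\varrho$. Applying Lemma~\ref{l:2} to $f_0'$ yields a polynomial $q_{n-1}\in\mathbb{P}_{n-1}$ with $\|f_0'-q_{n-1}\|_{L^{\infty}(\overline{G})}\ole n^m/\varrho^n$. To force the normalisation needed for membership in $\mathbb{P}_n^{*}$, I would replace $q_{n-1}$ by $\hat q:=q_{n-1}+1-q_{n-1}(z_0)$; since $|1-q_{n-1}(z_0)|=|f_0'(z_0)-q_{n-1}(z_0)|$, this shift preserves the rate while producing $\hat q(z_0)=1$. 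Setting $\hat p(z):=\int_{z_0}^{z}\hat q(\zeta)\,d\zeta$ then delivers a competitor $\hat p\in\mathbb{P}_n^{*}$ satisfying $\|f_0'-\hat p'\|_{L^{\infty}(\overline{G})}\ole n^m/\varrho^n$ and, by integration along a rectifiable path of uniformly bounded length inside $\overline{G}$ (available because $\Gamma$ is piecewise analytic without cusps), also $\|f_0-\hat p\|_{L^{\infty}(\overline{G})}\ole n^m/\varrho^n$.

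\textbf{Upper bound: transferring to $\pi_n$.} Next I would invoke the variational characterisation of $\pi_n$ as the $L^{2}(G)$-minimiser of $\|f_0'-p'\|_{L^{2}(G)}$ over $\mathbb{P}_n^{*}$, giving $\|f_0'-\pi_n'\|_{L^{2}(G)}\le\|f_0'-\hat p'\|_{L^{2}(G)}\ole n^m/\varrho^n$ (since $G$ is bounded). The polynomial $\pi_n-\hat p\in\mathbb{P}_n$ vanishes at $z_0$, so Lemma~\ref{lem:andri-gaier}(i) delivers
\[
\|\pi_n-\hat p\|_{L^{\infty}(\overline{G})}\ole\sqrt{\log n}\,\|(\pi_n-\hat p)'\|_{L^{2}(G)}\ole\sqrt{\log n}\,\frac{n^m}{\varrho^n}.
\]
A final triangle inequality $\|f_0-\pi_n\|_{L^{\infty}(\overline{G})}\le\|f_0-\hat p\|_{L^{\infty}(\overline{G})}+\|\hat p-\pi_n\|_{L^{\infty}(\overline{G})}$ closes the upper estimate.

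\textbf{Main obstacle.} The lower bound is almost a corollary of Lemma~\ref{l:2}. On the upper side, the delicate bookkeeping is that differentiation of $f_0$ raises the pole order from $m$ to $m+1$, which is the genuine source of the extra factor of $n$ in the exponent (so $n^{m-1}$ becomes $n^m$); the small affine correction made to get $\hat q(z_0)=1$ must be arranged so as not to disturb this rate. The remaining $\sqrt{\log n}$ enters precisely through Andrievskii's lemma and is intrinsic to the $L^2$-to-$L^{\infty}$ transfer used here.
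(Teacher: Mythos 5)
Your proof is correct and follows essentially the same route as the paper: Lemma~\ref{l:2} applied to the derivative (the paper uses the kernel $K(\cdot,z_0)$, which equals $f_0'/(\pi r_0^2)$, so the pole orders increase to $m+1$ either way), the $L^2$-minimality of $\pi_n$, Andrievskii's Lemma~\ref{lem:andri-gaier}(i) for the $L^2$-to-$L^\infty$ transfer, and Lemma~\ref{l:2} again for the lower bound. The only difference is presentational: you explicitly construct the competitor $\hat p\in\mathbb{P}_n^*$ and unpack the Andrievskii--Simonenko transition that the paper merely cites from \cite[\S2.1]{Ga88}, and you invoke the variational property of the Bieberbach polynomials directly instead of routing through the kernel polynomials and Lemma~\ref{l:1}.
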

\begin{proof}
We observe first that the kernel $K(z,z_0)$ shares the same analytic properties with $f_0$ on $\overline{G_\varrho}$, apart from an unit increase on the order of its poles on $L_\varrho$.
Therefore, using Lemma \ref{l:2} with $f\equiv K(\cdot,z_{0})$, we conclude that
\begin{equation}\label{eq:m1}
\|K(\cdot,z_0)-p_{n}\|_{L^{\infty}(\overline{G})}\ole \frac{n^{m}}{\varrho^{n}},\quad n\in\mathbb{N},
\end{equation}
for some sequence of polynomials $\{p_{n}\}_{n=1}^{\infty}$.
Since the $L^{2}(G)$-norm is dominated by the $L^{\infty}(\overline{G})$-norm, (\ref{eq:m1}) leads to the estimate
\begin{equation}
\|K(\cdot,z_{0})-p_{n}\|_{L^{2}(G)}\ole\frac{n^{m}}{\varrho^{n}}.
\end{equation}
Then, the minimum property of the kernel polynomials implies that
\begin{equation}\label{eq:kernel1}
\|K(\cdot,z_{0})-K_{n}(\cdot,z_{0})\|_{L^{2}(G)}\ole\frac{n^{m}}{\varrho^{n}},
\end{equation}
which, in conjunction with Remark~\ref{rem:lemma2.1}, yields the estimate
\begin{equation}
\|f_{0}^{\prime}-\pi_{n}^{\prime}\|_{L^{2}({G})}\ole\frac{n^{m}}{\varrho^{n}}.
\end{equation}

Now, we use Andrievskii's Lemma~\ref{lem:andri-gaier}(i) and employ the method of Andrievskii and Simonenko,
see e.g.\ \cite[\S2.1]{Ga88}. This method enables the transition from an upper bound of the error
$\|f_{0}^{\prime}-\pi_{n}^{\prime}\|_{L^{2}({G})}$ to a similar bound for the error
$\|f_{0}-\pi_{n}\|_{L^{\infty}(\overline{G})}$, with the extra cost of a $\sqrt{\log n}$ factor, and leads to
the upper estimate in (\ref{eq:st2}). The lower estimate follows immediately from Lemma~\ref{l:2}.
\end{proof}

The following pointwise estimate is useful in the study of the distribution of the zeros of the Bergman polynomials;
see e.g., \cite{LSS}, \cite{M-DSS}, \cite{SaSt08} and \cite{GPSS}.
\begin{corollary}\label{cor:thm3.1}
With the assumptions of Theorem~\ref{th:1} it holds,
\begin{equation}
|P_n(z_{0})|\preceq\frac{n^{m}}{\varrho^n},\quad n\in\mathbb{N}, \quad (z_0\in G).
\end{equation}
\end{corollary}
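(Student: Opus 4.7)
The plan is to reduce the bound on $|P_n(z_0)|$ to the $L^2$-estimate for the kernel error already proven in the course of Theorem~\ref{th:1}. The key observation is that by the Fourier expansion (\ref{eq:Kseries}) of $K(\cdot,z_0)$ in the orthonormal basis $\{P_j\}_{j=0}^\infty$ and the definition (\ref{eq:kernel3}) of $K_{n-1}(\cdot,z_0)$, one has
\begin{equation*}
K(z,z_0)-K_{n-1}(z,z_0)=\sum_{j=n}^{\infty}\overline{P_j(z_0)}\,P_j(z),
\end{equation*}
and therefore, by Parseval,
\begin{equation*}
\|K(\cdot,z_0)-K_{n-1}(\cdot,z_0)\|_{L^{2}(G)}^{2}=\sum_{j=n}^{\infty}|P_j(z_0)|^{2}.
\end{equation*}

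Next, I would invoke the estimate (\ref{eq:kernel1}) obtained inside the proof of Theorem~\ref{th:1}, namely
\begin{equation*}
\|K(\cdot,z_0)-K_{n-1}(\cdot,z_0)\|_{L^{2}(G)}\ole\frac{n^{m}}{\varrho^{n}},
\end{equation*}
(applied with $n-1$ in place of $n$, which is absorbed by the $\ole$ since $(n-1)^m/\varrho^{n-1}\asymp n^m/\varrho^n$). Combining the last two displays yields
\begin{equation*}
\sum_{j=n}^{\infty}|P_j(z_0)|^{2}\ole\frac{n^{2m}}{\varrho^{2n}}.
\end{equation*}

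Since every term of this nonnegative series is dominated by the sum, we get in particular $|P_n(z_0)|^{2}\ole n^{2m}/\varrho^{2n}$, and taking square roots produces the desired bound
\begin{equation*}
|P_n(z_0)|\preceq\frac{n^{m}}{\varrho^n}.
\end{equation*}
There is no real obstacle: the argument is essentially an extraction of a single term from Parseval's identity, once the $L^2$-error estimate for the kernel polynomials is available. The only mild point to watch is the index shift ($K_{n-1}$ versus $K_n$), which is harmless because the factor $n^m/\varrho^n$ is preserved up to a multiplicative constant under $n\mapsto n-1$.
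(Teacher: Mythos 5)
Your argument is correct and is essentially the paper's own: the paper extracts $|P_{n+1}(z_0)|=|\langle P_{n+1},K(\cdot,z_0)-K_n(\cdot,z_0)\rangle|\le\|K(\cdot,z_0)-K_n(\cdot,z_0)\|_{L^2(G)}$ via the reproducing property, orthogonality and Cauchy--Schwarz, which is just the single-coefficient form of your Parseval identity for the tail. The index shift is handled correctly, so nothing is missing.
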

\begin{proof}
The result emerges easily from (\ref{eq:kernel1}),  using the reproducing property of $K(\cdot,z_{0})$, the fact that
$P_{n+1}$ is orthogonal to any polynomial in $\mathbb{P}_n$, and the Cauchy-Schwarz inequality:
\begin{equation*}
\begin{alignedat}{3}
|{P}_{n+1}(z_{0})|
&=|\langle {P}_{n+1},K(\cdot,z_{0})\rangle|=|\langle {P}_{n+1},K(\cdot,z_{0})-K_n(\cdot,z_0)\rangle| \\
&\le \|{P}_{n+1}\|_{L^2(G)}\,\|K(\cdot,z_{0})-K_{n}(\cdot,z_0)\|_{L^2(G)}\\
&=\|K(\cdot,z_0)-K_{n}(\cdot,z_0)\|_{L^2(G)}.
\end{alignedat}
\end{equation*}
\end{proof}

\subsection{BKM/AB with pole singularities}\label{ss:pole}
We exploit now the specific assumptions on the singularities of the analytic extension of $f_0$ studied in Section~\ref{subsec:poles}.
More precisely, the assumption that the nearest singularities of $f_{0}$ are $\kappa$ poles, each one of order $k_{j}$ at $z_{j}$, $j=1,2,\ldots \kappa$, where $|\Phi(z_{1})|\le|\Phi(z_{2})|\le\cdots\le|\Phi(z_{\kappa})|$, and that the other singularities of $f_{0}$ occur at points
$z_{\kappa+1},z_{\kappa+2},\ldots$, where
$|\Phi(z_{\kappa})|<|\Phi(z_{\kappa+1})|\leqslant|\Phi(z_{\kappa+2})|\leqslant\cdots$. Therefore, for the BKM/AB we consider the system $\{\eta_{j}\}$, defined  by the singular functions in (\ref{eq:7}), with
$m_j=1$, $j=1,2,\ldots,\kappa$, and the $n$ monomials in (\ref{eq:9}). Accordingly, we let $\mathbb{P}_n^{A_1}$
denote the following space of augmented polynomials:
\begin{equation}
\mathbb{P}_{n}^{A_{1}}:=\{p:p(z)=\sum_{j=1}^{\kappa+n}t_j\eta_j(z),\,\, t_{j}\in\mathbb{C}\}.
\end{equation}
We note that the associated augmented kernel polynomial $\widetilde{K}_{n}(z,z_{0})$ is
the best approximation to $K(z,z_{0})$ in $L^{2}(G)$ out of the
space $\mathbb{P}_{n}^{A_{1}}$, i.e.,
\begin{equation}\label{eq:min}
\|K(\cdot,z_{0})-\widetilde{K}_{n}(\cdot,z_{0})\|_{L^{2}(G)}\leq\|K(\cdot,z_{0})-p\|_{L^{2}(G)},
\end{equation}
for any $p\in \mathbb{P}_{n}^{A_{1}}$.

The next theorem provides an estimate for the error in the resulting BKM/AB approximation
$\widetilde{\pi}_{n}$ to $f_0$.
\begin{theorem}\label{th2}
Assume that $\Gamma$ is piecewise analytic without cusps and set $\varrho:=|\Phi(z_{\kappa+1})|$.
Then,
\begin{equation}\label{eq:th:2}
\|f_{0}-\widetilde{\pi}_{n}\|_{L^{\infty}(\overline{G})}\ole\frac{1}{R^{n}},
\end{equation}
for any $R$, with $1<R<\varrho$, but for no $R>\varrho$.
\end{theorem}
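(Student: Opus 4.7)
The plan is to mirror the three-step blueprint used in the proof of Theorem~\ref{th:1}. First, bound $\|K(\cdot,z_{0})-\widetilde{K}_{n}(\cdot,z_{0})\|_{L^{2}(G)}$ by exhibiting an explicit approximant inside the augmented space $\mathbb{P}_{n}^{A_{1}}$. Second, translate this into an estimate for $\|f_{0}'-\widetilde{\pi}_{n}'\|_{L^{2}(G)}$ via Lemma~\ref{l:1}. Third, pass from the $L^{2}$ estimate on the derivative to the desired $L^{\infty}$ estimate on $f_{0}-\widetilde{\pi}_{n}$; unlike in Theorem~\ref{th:1}, the $\widetilde{\pi}_{n}$ are rational rather than polynomial, so part~(ii) of Lemma~\ref{lem:andri-gaier} is required in place of part~(i).

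For the first step, fix $R\in(1,\varrho)$. By~(\ref{eq:5}), $K(\cdot,z_{0})$ inherits the pole structure of $f_{0}'$ at $z_{1},\ldots,z_{\kappa}$ and is otherwise analytic on $\overline{G_{R}}$, since its further singularities $z_{\kappa+1},z_{\kappa+2},\ldots$ all lie on or outside $L_{\varrho}$. I would choose scalars $c_{1},\ldots,c_{\kappa}\in\mathbb{C}$ so that $\sum_{j=1}^{\kappa}c_{j}\eta_{j}$ absorbs the singular contributions of $K(\cdot,z_{0})$ at $z_{1},\ldots,z_{\kappa}$ that are captured by the basis functions~(\ref{eq:7}); the resulting remainder $g:=K(\cdot,z_{0})-\sum c_{j}\eta_{j}$ is analytic on $\overline{G_{R}}$. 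Walsh's maximal-convergence theorem then provides polynomials $r_{n}\in\mathbb{P}_{n-1}$ with $\|g-r_{n}\|_{L^{\infty}(\overline{G})}\ole 1/R^{n}$, so $p_{n}^{*}:=\sum c_{j}\eta_{j}+r_{n}\in\mathbb{P}_{n}^{A_{1}}$ satisfies $\|K(\cdot,z_{0})-p_{n}^{*}\|_{L^{2}(G)}\ole 1/R^{n}$. The minimum property~(\ref{eq:min}), combined with Lemma~\ref{l:1}, then yields $\|f_{0}'-\widetilde{\pi}_{n}'\|_{L^{2}(G)}\ole 1/R^{n}$.

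The main obstacle is the third step, since Lemma~\ref{lem:andri-gaier}(i) does not apply directly. Inspection of~(\ref{eq:11})--(\ref{eq:mudef}) shows that, with $q(z):=\prod_{j=1}^{\kappa}(z-z_{j})^{k_{j}}$ (a polynomial with no zeros on $\overline{G}$), every $\widetilde{\pi}_{n}$ has the form $P_{n}^{\#}/q$ with $P_{n}^{\#}$ a polynomial of degree at most $n+\deg q$. I would introduce an auxiliary rational approximant $R_{n}=T_{n}/q$ to $f_{0}$, where $T_{n}$ is obtained by polynomial approximation of the function $qf_{0}$ (which is analytic on $\overline{G_{R}}$), and normalize so that $R_{n}(z_{0})=0$; this provides $\|f_{0}-R_{n}\|_{L^{\infty}(\overline{G})}\ole 1/R^{n}$. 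The decomposition
\begin{equation*}
f_{0}-\widetilde{\pi}_{n}=(f_{0}-R_{n})+(R_{n}-\widetilde{\pi}_{n})
\end{equation*}
then isolates in the second summand a function of the form $P_{n}/q$ vanishing at $z_{0}$, to which Lemma~\ref{lem:andri-gaier}(ii) applies. Bounding $\|(R_{n}-\widetilde{\pi}_{n})'\|_{L^{2}(G)}$ by the triangle inequality, together with the first-step estimate on $f_{0}'-\widetilde{\pi}_{n}'$ and a Cauchy-type bound on $(f_{0}-R_{n})'$ (using analyticity of $qf_{0}$ past $\overline{G}$), yields $\|R_{n}-\widetilde{\pi}_{n}\|_{L^{\infty}(\overline{G})}\ole \sqrt{\log n}/R^{n}$. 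Absorbing the $\sqrt{\log n}$ factor by shrinking $R$ slightly completes the upper bound.

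For the sharpness claim ``no $R>\varrho$'', I would argue by contradiction: suppose $\|f_{0}-\widetilde{\pi}_{n}\|_{L^{\infty}(\overline{G})}=O(R^{-n})$ for some $R>\varrho$. Each difference $\widetilde{\pi}_{n}-\widetilde{\pi}_{n-1}$ is a rational function with poles confined to $\{z_{1},\ldots,z_{\kappa}\}$, all of which lie inside $L_{\varrho}\subset L_{R}$. Applying the Bernstein--Walsh inequality to the polynomial numerators $q(\widetilde{\pi}_{n}-\widetilde{\pi}_{n-1})$ shows that the telescoping series converges uniformly on compact subsets of $G_{R}\setminus\{z_{1},\ldots,z_{\kappa}\}$, yielding a meromorphic extension of $f_{0}$ to $G_{R}$ whose only singularities inside $L_{R}$ are the absorbed poles $z_{1},\ldots,z_{\kappa}$. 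This contradicts the fact that $z_{\kappa+1}\in L_{\varrho}\subset G_{R}$ is a singularity of $f_{0}$ distinct from $z_{1},\ldots,z_{\kappa}$, establishing sharpness.
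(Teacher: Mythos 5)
Your proposal is correct and follows essentially the same route as the paper: an $L^{2}(G)$ bound on $K(\cdot,z_{0})-\widetilde{K}_{n}(\cdot,z_{0})$ obtained from Walsh's maximal convergence theorem after removing the poles at $z_{1},\ldots,z_{\kappa}$, then the minimum property (\ref{eq:min}) together with Lemma~\ref{l:1}, then the passage to the sup-norm via Lemma~\ref{lem:andri-gaier}(ii) applied to the representation $\widetilde{\pi}_{n}=P/q$, and finally sharpness via a Walsh-type overconvergence argument. The only cosmetic differences are that the paper clears the poles by multiplying $K(\cdot,z_{0})$ by $Q(z)=\prod_{j}(z-z_{j})^{k_{j}+1}$ rather than subtracting $\sum_{j}c_{j}\eta_{j}$ (both devices rest on the same tacit assumption that the basis (\ref{eq:7}) captures the full principal part at each $z_{j}$), and that it performs the $L^{2}\to L^{\infty}$ transition by the Andrievskii--Simonenko telescoping rather than by comparison with an auxiliary near-best approximant $T_{n}/q$; these variants use the same key lemma and yield the same bound.
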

\begin{proof}
Observe that $K(\cdot,z_0)$ has poles of order $k_{j}+1$ at each $z_{j}, j=1,2,\ldots,\kappa$, and set $Q(z):=\prod_{j=1}^{k}(z-z_{j})^{k_{j}+1}$. Then, the function $K(z,z_0)Q(z)$ is analytic in the interior $G_\varrho$
of the level curve of $L_\varrho$, and from Walsh's maximal convergence theorem \cite[\S4.7]{Wa} it follows that,
for any $R$, with $1<R<\varrho$, there exists a sequence
of polynomial $\{p_n\}_{n=1}^{\infty}$, such that
\begin{equation}\label{eq:Walshfp}
\|K(\cdot,z_0)Q-p_n\|_{L^{\infty}(\overline{G})}\ole\frac{1}{R^n},\quad n\in\mathbb{N}.
\end{equation}

Let now $d:=\min_{j=1,2,\ldots,\kappa}\{|z-z_{j}|:\,z\in \Gamma\}$ denote the distance of $\Gamma$ from the poles
$\{z_{j}\}_{j=1}^\kappa$, and set $\xi:=\sum_{j=1}^{\kappa}k_{j}$.
Then, $|Q(z)|\geq d^{\kappa+\xi}$, $z\in\Gamma$, and (\ref{eq:Walshfp}) gives
$$
\|K(\cdot,z_0)-\frac{p_{n}}{Q}\|_{L^\infty(\overline{G})}\le\frac{c}{d^{\kappa+\xi}}\frac{1}{R^n}.
$$
Since the $L^{2}(G)$-norm is dominated by the $L^{\infty}(\overline{G})$-norm, we see that there exist a sequence of rational polynomials $\{Q_{n}\}_{n=1}^{\infty}$, with $Q_{n}\in \mathbb{P}_{n}^{A_{1}}$, such that,
\begin{equation*}
\|K(\cdot,z_0)-Q_{n}\|_{L^{2}(G)}\ole \frac{1}{R^n},\quad n\in\mathbb{N}.
\end{equation*}
Therefore, using the minimum property (\ref{eq:min}) of the augmented kernel polynomials,  we have
\begin{equation}\label{eq:minKt}
\|K(\cdot,z_0)-\widetilde{K}_n(\cdot,z_0)\|_{L^{2}(G)}\ole \frac{1}{R^n},\quad n\in\mathbb{N},
\end{equation}
and this, in conjunction with the equivalence Lemma \ref{l:1}, yields the estimate
\begin{equation}
\|f_{0}^{\prime}-\widetilde{\pi}_{n}^{\prime}\|_{L^{2}(G)}\ole \frac{1}{R^n},\quad n\in\mathbb{N}.
\end{equation}

Next, we recall that
\begin{equation}\label{eq:widepin}
\widetilde{\pi}_n(z)=
\sum_{j=1}^\kappa c_{n,j}\left[\frac{1}{(z-z_j)^{k_j}}-\frac{1}{(z_0-z_j)^{k_j}}\right]
+\sum_{j=1}^{n} c_{n,\kappa+j}\left[z^j-z_0^j\right],
\end{equation}
i.e.,
\begin{equation}\label{eq:widepin1}
\widetilde{\pi}_n(z)=\frac{P(z)}{q(z)}, \mbox{ where }\, q(z):=\prod_{j=1}^\kappa(z-z_j)^{k_j},
\end{equation}
and $P(z)$ is a polynomial of degree $n+\xi$.

Then, the transition from the $L^2(G)$-norm to the $L^\infty(\overline{G})$-norm is done as in the proof of Theorem~\ref{th:1},
where now, in view of (\ref{eq:widepin1}), Lemma~\ref{lem:andri-gaier}(ii) is applicable.  This leads to,
\begin{equation}
\|f_0-\widetilde{\pi}_n\|_{L^\infty(\overline{G})}
\ole \frac{\sqrt{\log n}}{R^n},\quad n\ge 2,
\end{equation}
and (\ref{eq:th:2}) follows with a different, but still less than $\varrho$,  $R$.

Finally, the fact that (\ref{eq:th:2}) holds for no $R>\varrho$ is evident from \cite[Thm.~6, Ch.~IV]{Wa}, since the contrary assumption would lead to the contradicting conclusion that $f_0$ has no singularities on $L_\varrho$; see the next remark.
\end{proof}

\begin{remark}\label{rem:PapWa}
From (\ref{eq:widepin}) it is clear that $\widetilde{\pi}_n(z)=\widetilde{q}_\kappa(z)+p_n(z)$, where  $\widetilde{q}_\kappa$ is defined by the
nearest $\kappa$ poles of $f_0$ in $\Omega$ and $p_n\in\mathbb{P}_n$. Hence, (\ref{eq:th:2}) gives
\begin{equation*}
\|(f_{0}-\widetilde{q}_\kappa)-p_n\|_{L^{\infty}(\overline{G})}
\ole\frac{1}{R^{n}},
\end{equation*}
for any $1<R<\varrho$, and Theorem~6 in \cite[Ch.~V]{Wa} implies that the function $f_{0}-\widetilde{q}_\kappa$
is analytic in $G_\varrho$. This shows that the the rational polynomial $\widetilde{q}_\kappa$, constructed
by the BKM/AB considered above, \textit{cancels out the specific poles of $f_0$ that contains}. In particular,
this provides the theoretical justification for the heuristic observation made to that effect by Papamichael and Warby in \cite[p.~652]{PW86}.
\end{remark}

A finer estimate than (\ref{eq:th:2}) can be obtained if the singularities of $f_0$ on $L_{|\Phi(z_{\kappa+1})|}$
are a finite number of poles.
\begin{theorem}\label{th2ii}
Assume that $\Gamma$ is piecewise analytic without cusps and set $\varrho:=|\Phi(z_{\kappa+1})|$.
Assume, in addition to Theorem~\ref{th2}, that  $f_0$  has a finite number of poles and no other singularities on
$L_\varrho$ and let $m$ denote their highest order. Then,
\begin{equation}
\frac{n^{m-1}}{\varrho^{n}}
\preceq\|f_{0}-\widetilde{\pi}_{n}\|_{L^{\infty}(\overline{G})}
\preceq\frac{n^{m}\sqrt{\log n}}{\varrho^{n}},\quad n\ge 2.
\end{equation}
\end{theorem}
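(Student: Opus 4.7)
The strategy is to strengthen the argument of Theorem~\ref{th2} by replacing its qualitative Walsh-type rate $1/R^n$ with the sharp meromorphic-approximation rate supplied by Lemma~\ref{l:2}, and to establish the matching lower bound by reducing the rational-approximation problem to a polynomial one via multiplication by the fixed denominator $q(z)=\prod_{j=1}^\kappa (z-z_j)^{k_j}$ of $\widetilde{\pi}_n$.

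\textit{Upper bound.} Set $Q(z):=\prod_{j=1}^\kappa(z-z_j)^{k_j+1}$. Since $K(\cdot,z_0)=f_0'/(\pi r_0^2)$, the kernel inherits the poles of $f_0$ with orders raised by one; in particular, under the new hypothesis on $L_\varrho$, the function $K(\cdot,z_0)\,Q$ is analytic on $\overline{G_\varrho}$ apart from a finite number of poles on $L_\varrho$ of highest order $m+1$ (the zeros of $Q$ lie at the $z_j$, which are interior to $G_\varrho$, so $Q$ itself is analytic on $L_\varrho$). Lemma~\ref{l:2} then supplies polynomials $p_n$ with
\begin{equation*}
\|K(\cdot,z_0)\,Q-p_n\|_{L^{\infty}(\overline{G})}\preceq\frac{n^m}{\varrho^n}.
\end{equation*}
Dividing by $Q$, which is bounded below by a positive constant on $\overline{G}$, produces rational approximants $p_n/Q$ of $K(\cdot,z_0)$ of the same order in the $L^2(G)$-norm. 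Exactly as in the proof of Theorem~\ref{th2}, the minimum property (\ref{eq:min}) of $\widetilde{K}_n$ and Lemma~\ref{l:1} give $\|f_0'-\widetilde{\pi}_n'\|_{L^2(G)}\preceq n^m/\varrho^n$, and Andrievskii's Lemma~\ref{lem:andri-gaier}(ii) applied to $\widetilde{\pi}_n=P/q$ (see (\ref{eq:widepin1})) transfers this bound to the $L^{\infty}$-norm at the cost of a $\sqrt{\log n}$ factor, which is the desired upper estimate.

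\textit{Lower bound.} From (\ref{eq:widepin1}) we have $\widetilde{\pi}_n\,q=s_n$ for some $s_n\in\mathbb{P}_{n+\xi}$, where $\xi=\sum_j k_j$ is fixed. The function $f_0\,q$ is analytic on $\overline{G_\varrho}$ apart from the poles of $f_0$ on $L_\varrho$: the zeros of $q$ exactly cancel the $\kappa$ nearest poles of $f_0$ (which lie interior to $G_\varrho$), while $q$ is analytic on $L_\varrho$ and so leaves its pole orders unchanged at $m$. Consequently,
\begin{equation*}
\|q\|_{L^{\infty}(\overline{G})}\,\|f_0-\widetilde{\pi}_n\|_{L^{\infty}(\overline{G})}\ge\|f_0\,q-s_n\|_{L^{\infty}(\overline{G})}\ge\inf_{s\in\mathbb{P}_{n+\xi}}\|f_0\,q-s\|_{L^{\infty}(\overline{G})},
\end{equation*}
and the lower estimate in Lemma~\ref{l:2} applied to $f_0\,q$ bounds the right-hand side below by $(n+\xi)^{m-1}/\varrho^{n+\xi}\asymp n^{m-1}/\varrho^n$, which yields the claimed lower bound.

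\textit{Main obstacle.} The essential work is the meromorphic bookkeeping: selecting the multipliers $Q$ and $q$ so that they exactly neutralize the singularities inside $G_\varrho$ without perturbing the pole structure on $L_\varrho$, ensuring that the auxiliary functions $K(\cdot,z_0)\,Q$ and $f_0\,q$ enter Lemma~\ref{l:2} with the precise top pole orders ($m+1$ and $m$, respectively) needed for the upper and lower directions. Once this is in place, the remaining steps are a direct combination of the techniques used in the proofs of Theorems~\ref{th:1} and~\ref{th2}.
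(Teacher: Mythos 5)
Your proposal is correct and follows essentially the same route as the paper: the upper bound is obtained by rerunning the proof of Theorem~\ref{th2} with the sharp rate of Lemma~\ref{l:2} (applied to $K(\cdot,z_0)Q$, whose poles on $L_\varrho$ have order $m+1$) in place of Walsh's theorem, and the lower bound by multiplying by $q$ so that $qf_0$ is meromorphic with poles of order $m$ on $L_\varrho$ while $q\widetilde{\pi}_n\in\mathbb{P}_{n+\xi}$, then invoking the lower estimate of Lemma~\ref{l:2}. The bookkeeping of pole orders and the $\|q\|_{L^\infty(\overline{G})}$ normalization match the paper's argument.
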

\begin{proof}
The upper estimate follows by working in the same way as in the proof of Theorem~\ref{th2}, but using here the
precise result of Lemma~\ref{l:2}, in the place of Walsh's theorem in (\ref{eq:Walshfp}).

To obtain the lower estimate, observe that
$q\widetilde{\pi}_n$ is a polynomial of degree $n+\xi$ (see (\ref{eq:widepin1})) and that the function $qf_{0}$ is analytic on $\overline{G_\varrho}$,
apart from a finite number of poles on $L_\varrho$. Hence, from Lemma \ref{l:2} we have, for $n\in\mathbb{N}$,
\begin{equation}
\|qf_0-q\widetilde{\pi}_n\|_{L^{\infty}(\overline{G})}
\succeq\inf_{p\in\mathbb{P}_{n+\xi}}\|qf_0-p\|_{L^{\infty}(\overline{G})}
\succeq\frac{n^{m-1}}{\varrho^n},
\end{equation}
which yields the estimate
\begin{equation*}
\|f_0-\widetilde{\pi}_n\|_{L^\infty(\overline{G})}\ge
\frac{c}{\|q\|_{L^\infty(\overline{G})}}\frac{n^{m-1}}{\varrho^n}.
\end{equation*}
and hence the required result.
\end{proof}

In the more general case, where the nearest $\kappa$ singularities of $f_0$ in $\Omega$ are rational poles of the type (\ref{eq:papam}), we have the following result regarding the associated kernel polynomials $\widetilde{K}_{n}(\cdot,z_{0})$.
\begin{theorem}\label{th:7}
Assume that $\Gamma$ is piecewise analytic without cusps and set $\varrho:=|\Phi(z_{\kappa+1})|$.
Then,
\begin{equation}\label{eq:28}
\|K(\cdot,z_{0})-\widetilde{K}_{n}(\cdot,z_{0})\|_{L^{2}(G)}\ole\frac{1}{R^{n}},
\end{equation}
for any $R$, $1<R<\varrho$.
\end{theorem}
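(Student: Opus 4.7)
The approach I would take is to mirror the proof of Theorem~\ref{th2}, replacing its polynomial multiplier $Q$, which exploited the integer order of the poles, by an explicit subtraction of the singular parts of $K(\cdot,z_0)$. The augmented basis functions $\eta_j$ in (\ref{eq:7}) are constructed precisely to capture, up to a scalar, the leading algebraic pole behavior of $(\pi r_0^2)^{-1}f_0'$ at each $z_j$; this supplies the natural analogue of the factoring step of Theorem~\ref{th2} in the fractional-exponent setting.

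First I would invoke the hypothesis that at each $z_j$, $j=1,\ldots,\kappa$, the mapping $f_0$ has a singular part of the form $a_j(z-z_j)^{-k_j/m_j}$, so that $f_0(z)-a_j(z-z_j)^{-k_j/m_j}$ is analytic at $z_j$. Differentiating and using $K(\cdot,z_0)=(\pi r_0^2)^{-1}f_0'$ from (\ref{eq:5}), I obtain scalars $c_j$ such that $K(\cdot,z_0)-c_j\eta_j$ is analytic at $z_j$. The function
\[
g(z):=K(z,z_0)-\sum_{j=1}^{\kappa}c_j\eta_j(z)
\]
is then analytic throughout $G_\varrho$, because each $\eta_j$ is analytic in $G_\varrho\setminus\{z_j\}$ (its only singularity being at $z_j\in\Omega$), and the preceding subtraction removes the singularity of $K(\cdot,z_0)$ at each $z_j$.

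Next, Walsh's maximal convergence theorem (\cite[\S4.7]{Wa}) applied to $g$ yields, for every $1<R<\varrho$, a sequence of polynomials $p_n$, $\deg p_n\le n-1$, satisfying $\|g-p_n\|_{L^{\infty}(\overline{G})}\ole 1/R^n$. The degree restriction $n-1$ is dictated by the fact that the monomial part of $\mathbb{P}_n^{A_1}$ is spanned by $\{(z)',\ldots,(z^n)'\}$; it costs only an inessential factor $R$ that is absorbed in $\ole$. Setting $q_n:=p_n+\sum_{j=1}^{\kappa}c_j\eta_j$, we have $q_n\in\mathbb{P}_n^{A_1}$ and $\|K(\cdot,z_0)-q_n\|_{L^{\infty}(\overline{G})}\ole 1/R^n$. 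Dominating the $L^2(G)$-norm by the $L^\infty(\overline{G})$-norm and invoking the minimum property (\ref{eq:min}) of $\widetilde{K}_n(\cdot,z_0)$ yields (\ref{eq:28}).

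The main subtlety, and essentially the only new issue compared with Theorem~\ref{th2}, lies in the preliminary decomposition $K(\cdot,z_0)-c_j\eta_j$ being analytic at $z_j$: the phrase \emph{rational pole of the form} $(z-z_j)^{-k_j/m_j}$ must be read as asserting that a single singular term exhausts the non-analytic behavior of $f_0$ at $z_j$, which is precisely what makes a single augmented basis function per pole sufficient. Once this decomposition is in place, no transfer between $L^2$ and $L^\infty$ is required (in contrast with Theorem~\ref{th2}), so Lemma~\ref{lem:andri-gaier} is not needed and the remainder of the argument is routine.
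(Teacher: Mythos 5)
Your proof is correct under the reading of the rational-pole hypothesis that you yourself single out, but it takes a genuinely different route from the paper. The paper's proof is a one-liner and is \emph{multiplicative}: it sets $Q(z):=\prod_{j=1}^{\kappa}(z-z_{j})^{k_{j}/m_{j}+1}$ and repeats the argument of Theorem~\ref{th2} verbatim up to (\ref{eq:minKt}), i.e.\ it clears the singularities of $K(\cdot,z_{0})$ by multiplying by $Q$, applies Walsh's theorem to $QK(\cdot,z_{0})$, and divides back by $Q$. Your route is \emph{additive}: subtract the singular parts $c_{j}\eta_{j}$, apply Walsh to the remainder, add them back. The additive route buys something real here: your approximant $p_{n}+\sum_{j}c_{j}\eta_{j}$ visibly lies in $\mathbb{P}_{n}^{A_{1}}$, whereas in the paper's route the function $p_{n}/Q$, with $Q$ carrying fractional exponents, is \emph{not} a linear combination of the $\eta_{j}$ and monomial derivatives, so the appeal to the minimum property (\ref{eq:min}) is the least transparent step there (in Theorem~\ref{th2} itself, where $Q$ is a genuine polynomial, partial fractions essentially rescues that step).

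The corresponding cost of your route is the one you flag: you need the single term $a_{j}(z-z_{j})^{-k_{j}/m_{j}}$ to exhaust the non-analytic behaviour of $f_{0}$ at $z_{j}$. Note that the alternative reading of (\ref{eq:papam}) --- the one under which the paper's multiplication by $(z-z_{j})^{k_{j}/m_{j}+1}$ actually yields a function analytic at $z_{j}$ --- is $f_{0}(z)=(z-z_{j})^{-k_{j}/m_{j}}h_{j}(z)$ with $h_{j}$ analytic, $h_{j}(z_{j})\neq0$. For $m_{j}\geq2$ that reading leaves, after your subtraction, terms $(z-z_{j})^{l-k_{j}/m_{j}}$, $l\geq1$, which still carry a branch point at $z_{j}\in G_{\varrho}$, so your $g$ would not be analytic in $G_{\varrho}$ and the Walsh step would fail; similarly, for a true pole of order $k_{j}\geq2$ a single term misses the lower-order principal parts, which are likewise outside $\mathbb{P}_{n}^{A_{1}}$ as literally defined. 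The two readings coincide exactly for simple poles ($k_{j}=m_{j}=1$), where both proofs are clean. One further small point: for non-integer $k_{j}/m_{j}$ your parenthetical claim that $\eta_{j}$ is analytic in $G_{\varrho}\setminus\{z_{j}\}$ is not literally true (it is multivalued around $z_{j}$); what saves the argument is that, under your hypothesis, the monodromies of $K(\cdot,z_{0})$ and of $c_{j}\eta_{j}$ around each $z_{j}$ cancel, so the difference $g$ does descend to a single-valued analytic function on $G_{\varrho}$. It would be worth saying this explicitly.
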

\begin{proof}
Set $Q(z):=\prod_{j=1}^{k}(z-z_{j})^{{k_{j}}/{m_{j}}+1}$ and follow the proof of Theorem~\ref{th2} up to
Equation (\ref{eq:minKt}).
\end{proof}

\section{BKM with pole and corner singularities}\label{sec:4}
In this section we assume that $f_0$  \textit{has a singularity on} $\Gamma$ and
study the BKM and BKM/AB errors, corresponding to a variety of different syntheses of the system $\{\eta_j\}$
of basis functions. In stating the results we use the notation and the assumptions set up in
Sections~\ref{subsec:corners} and \ref{subsec:poles}.

\subsection{BKM}
Our first result is a straightforward consequence of Theorem~3.1 of \cite{SaSt08} and
Lemma~\ref{lem:andri-gaier} above.
\begin{theorem}\label{th3}
Assume that $\Gamma$ is piecewise analytic without cusps and set $\varrho:=|\Phi(z_1)|$ and
$s:=\min\{(2-\alpha_k)/\alpha_k:\, 1\leq k\le M\}$. Then,
\begin{equation}\label{eq:th3}
\|f_{0}-\pi_n\|_{L^{\infty}(\overline{G})}\leq
c_{1}\sqrt{\log n}\frac{1}{n^{s}}+c_{2}\frac{1}{R^{n}},\quad n\ge2.
\end{equation}
for any $R$, $1<R<\varrho$.
\end{theorem}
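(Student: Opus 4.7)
The plan is to follow the pattern of Theorem~\ref{th:1}: construct a polynomial competitor for $K(\cdot,z_{0})$ that simultaneously captures the corner singularities of $f_{0}$ on $\Gamma$ and the nearest singularity of $f_{0}$ in $\Omega$, then invoke the minimum property of the kernel polynomials together with Lemma~\ref{l:1} to bound $\|f_{0}^{\prime}-\pi_{n}^{\prime}\|_{L^{2}(G)}$, and finally pass to the sup-norm via the Andrievskii--Simonenko trick combined with Lemma~\ref{lem:andri-gaier}(i).

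First, I would invoke Theorem~3.1 of \cite{SaSt08} to exhibit, for any fixed $R$ with $1<R<\varrho$, a sequence $p_{n}\in\mathbb{P}_{n}$ satisfying
\[
\|K(\cdot,z_{0})-p_{n}\|_{L^{2}(G)}\ole \frac{1}{n^{s}}+\frac{1}{R^{n}}.
\]
The first summand originates in the $L^{2}$-polynomial approximation of the dominant corner terms $(z-\tau_{k})^{\gamma_{1}^{(k)}-1}$ that appear in $K$ via $f_{0}^{\prime}$ (cf.\ Remark~\ref{rem:Lehman}); the exponent $s$ is dictated by the sharpest corner. The second summand reflects the analytic continuation of $f_{0}$ across the smooth parts of $\Gamma$ as far as $L_{\varrho}$, where Walsh's maximal convergence theorem supplies the geometric rate $1/R^{n}$. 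The minimum property of the kernel polynomials and Lemma~\ref{l:1} (see Remark~\ref{rem:lemma2.1}) then propagate the bound to
\[
\|f_{0}^{\prime}-\pi_{n}^{\prime}\|_{L^{2}(G)}\ole \frac{1}{n^{s}}+\frac{1}{R^{n}}.
\]

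Next, since $\pi_{n}(z_{0})=0$, I would apply Lemma~\ref{lem:andri-gaier}(i) in conjunction with the Andrievskii--Simonenko telescoping scheme already used in the proof of Theorem~\ref{th:1}, at the cost of one factor $\sqrt{\log n}$, obtaining
\[
\|f_{0}-\pi_{n}\|_{L^{\infty}(\overline{G})}\ole \frac{\sqrt{\log n}}{n^{s}}+\frac{\sqrt{\log n}}{R^{n}}.
\]
For any $R^{\prime}$ with $1<R^{\prime}<R<\varrho$ the second summand is dominated by $1/(R^{\prime})^{n}$ for all sufficiently large $n$, and (\ref{eq:th3}) follows after relabeling $R^{\prime}$ as $R$.

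The main obstacle is Step~1: one needs the trial polynomial to realize the two decay rates simultaneously. If \cite[Thm.~3.1]{SaSt08} is not already packaged in this hybrid form, one can assemble $p_{n}$ by hand: near each non-special corner $\tau_{k}$ truncate the asymptotic expansion (\ref{eq:lemman2}) of $f_{0}$ and subtract the singular part using a smooth cutoff, producing a function that is analytic on $\overline{G_{\varrho}}$ apart from the prescribed isolated singularities, and then approximate the remainder by Walsh-type polynomials on $\overline{G_{\varrho}}$. This splitting is routine once the individual estimates are available; the rest of the argument is a direct reuse of the machinery developed in Section~\ref{sec:3}.
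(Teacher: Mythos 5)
Your proposal follows the paper's own argument essentially verbatim: the $L^{2}$ bound comes from Theorem~3.1 of \cite{SaSt08} (with the auxiliary point taken arbitrarily close to $z_{1}$, which is what lets $R$ range over all of $(1,\varrho)$), the minimum property of the kernel polynomials and Lemma~\ref{l:1} transfer it to $\|f_{0}^{\prime}-\pi_{n}^{\prime}\|_{L^{2}(G)}$, and the Andrievskii--Simonenko transition with Lemma~\ref{lem:andri-gaier}(i) plus the final relabeling of $R$ to absorb the $\sqrt{\log n}$ on the geometric term completes the proof. This is correct and matches the paper's proof.
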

\begin{proof}
Observe that $\zeta_1$ in Theorem~3.1 of \cite{SaSt08} can be chosen arbitrarily close to $z_1$. Thus, from the minimum
property of the kernel polynomials $K_n(\cdot,z_0)$ we have
\begin{equation}\label{eq:SSest}
\|K(\cdot,z_0)-K_n(\cdot,z_0)\|_{L^{2}(G)}\leq
c_1\frac{1}{n^{s}}+c_2\frac{1}{R^{n}},\quad n\in\mathbb{N},
\end{equation}
for any $R$, $1<R<\varrho$, and the transition from the $L^2(G)$-error in (\ref{eq:SSest}) to the $L^\infty(\overline{G})$-error in (\ref{eq:th3}), goes in the same lines as in the proof of Theorem~\ref{th2}.
\end{proof}

\begin{remark}
Clearly, as $n\to\infty$, (\ref{eq:th3}) yields the result (\ref{eq:6}). However, Theorem~\ref{th3} does more:
It captures, in a very precise form, the dependance of the BKM error $\|f_{0}-\pi_{n}\|_{L^{\infty}(\overline{G})}$
for \lq\lq small" values of $n$, on both the corner and pole singularities of $f_0$. This dependance has been testified numerically in \cite{LPS} and has given rise to the introduction of the BKM/AB.
\end{remark}

The following result is a simple consequence of (\ref{eq:SSest}). Its proof is similar to that of Corollary~\ref{cor:thm3.1}.
\begin{corollary}\label{cor:th3}
With the assumptions of Theorem~\ref{th3} it holds,
\begin{equation}\label{eq:SSestPn}
|P_n(z_{0})|\le c_1\frac{1}{n^{s}}+c_2\frac{1}{R^{n}},\quad n\in\mathbb{N}, \quad (z_0\in G).
\end{equation}
\end{corollary}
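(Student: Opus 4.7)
The plan is to mimic verbatim the argument used for Corollary~\ref{cor:thm3.1}, replacing the pole-based kernel estimate (\ref{eq:kernel1}) with the mixed corner/pole kernel estimate (\ref{eq:SSest}) established inside the proof of Theorem~\ref{th3}. The key observation is that nothing in the reproducing-property manipulation uses anything specific to the pole case; only the $L^2$ rate on $K(\cdot,z_0)-K_n(\cdot,z_0)$ changes.

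Concretely, I would first recall that $K_n(\cdot,z_0)\in\mathbb{P}_n$ and that $P_{n+1}$ is orthogonal to every element of $\mathbb{P}_n$, hence $\langle P_{n+1},K_n(\cdot,z_0)\rangle=0$. Combined with the reproducing property (\ref{eq:3}) this gives
\begin{equation*}
P_{n+1}(z_0)=\langle P_{n+1},K(\cdot,z_0)\rangle=\langle P_{n+1},K(\cdot,z_0)-K_n(\cdot,z_0)\rangle.
\end{equation*}
Next I would apply the Cauchy--Schwarz inequality and use that $\|P_{n+1}\|_{L^2(G)}=1$ by orthonormality, producing
\begin{equation*}
|P_{n+1}(z_0)|\le \|K(\cdot,z_0)-K_n(\cdot,z_0)\|_{L^2(G)}.
\end{equation*}
Finally, I would invoke (\ref{eq:SSest}) from the proof of Theorem~\ref{th3}, which yields
\begin{equation*}
|P_{n+1}(z_0)|\le c_1\frac{1}{n^s}+c_2\frac{1}{R^n},\quad 1<R<\varrho,
\end{equation*}
and a trivial reindexing $n+1\mapsto n$ (adjusting the constants) delivers (\ref{eq:SSestPn}).

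There is essentially no obstacle: the estimate (\ref{eq:SSest}) does all the work, and the Fourier/Cauchy--Schwarz step is identical to that of Corollary~\ref{cor:thm3.1}. The only minor bookkeeping issue is the shift of index between $P_{n+1}$ and $P_n$, which is absorbed into the constants $c_1,c_2$ since $1/(n-1)^s\asymp 1/n^s$ and $1/R^{n-1}\asymp 1/R^n$ for $n\ge 2$.
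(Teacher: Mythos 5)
Your proposal matches the paper's own proof: the paper simply states that Corollary~\ref{cor:th3} follows from (\ref{eq:SSest}) by the same reproducing-property/orthogonality/Cauchy--Schwarz argument as in Corollary~\ref{cor:thm3.1}, which is precisely what you carry out. The index-shift remark is a harmless bookkeeping detail and the argument is complete.
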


\begin{remark}
Since $|P_n(z_0)|\leq\|K(\cdot,z_0)-K_n(\cdot,z_0)\|_{L^{2}(G)}$, it follows from Corollary~\ref{cor:th3} that, if for small values of $n$, $P_n(z_0)$ decays geometrically to zero, then the most \lq\lq serious" singularity of $K(\cdot,z_0)$, and hence of $f_0$, is the nearest pole in $\Omega$ and not an algebraic singularity on the boundary, as the asymptotic estimate (\ref{eq:6}) would suggest.
On the other hand, given that $f_0$ has a singularity on $\Gamma$, Theorem~2.1 of \cite{LSS} implies that any point of $\Gamma$ is a point of accumulation of the zeros of the sequence $\{P_n\}_{n=1}^\infty$. Therefore, an easy way to check whether a pole singularity is more serious than an algebraic singularity, for a range of values of $n$, is by plotting the zeros of $P_n$ for the same range: \textit{If the zeros stay away from a specific part of the boundary, this indicates that $P_n(z_0)$ decays geometrically and therefore the presence of a pole singularity near that part}.
We refer to \cite[Examples 2, 3]{SaSt08}, where (\ref{eq:SSestPn}) was used as the tool for explaining the misleading nature of such plots.
\end{remark}

\subsection{BKM/AB with corner singularities}\label{ss:corner}
From our assumptions on $\Gamma$, it follows that the conformal map $f_0$ can be extended analytically, by means of the reflection principle, beyond $\Gamma$ to a larger Jordan domain $\widetilde{G}$, such that the boundary $\partial\widetilde{G}$ of $\widetilde{G}$ consists of  analytic arcs to be fixed below.
For this, we recall our assumptions on the position of the nearest poles $z_j$, $j=1,\ldots,\kappa$, of $f_0$ in $\Omega$ and pick up a point $\zeta_1$ near $z_1$, but interior to the level curve  $L_\varrho$, with $\varrho:=|\Phi(z_1)|$.
Next, we draw the level curve $L_{\widetilde{\varrho}}$, with $\widetilde{\varrho}:=|\Phi(\zeta_1)|$ and fix on it
points $\zeta_k$, $k=2,\ldots,N$, "between" $\tau_k$ and $\tau_{k+1}$, where we set $\tau_{N+1}=\tau_1$. We connect
each non-special corner $\tau_k$, $k=1,\ldots,M$, with the two adjacent $\zeta_k$'s, by using two analytic arc.
Next, we denote by $l_k$ the two arcs emanating from $\tau_k$ and call $l_N$ the part (or parts) of the level line
$L_{\widetilde{\varrho}}$ that joins together those consecutive points $\zeta_k$ that have only one connection with $\tau_k$.
See Figure~\ref{fig:tilG}, for a possible arrangement of corners $\tau_k$, points $\zeta_k$, and arcs $l_k$ and $l_N$.
Finally, we define $\widetilde{G}$ by taking $\partial\widetilde{G}:=\{\cup_{k=1}^Ml_k\}\cup l_N$.

The above construction is such that:
\begin{itemize}
\item[(i)]
$\partial\widetilde{G}$ is a piecewise analytic Jordan curve that
meets $\Gamma$ at the  non-special corner $\tau_k$, $k=1,\ldots,M$.
\item[(ii)]
$f_0$ is continuous on $\widetilde{G}\cup\partial\widetilde{G}$ and analytic in $\widetilde{G}$ and on $\partial\widetilde{G}$, except for the endpoints $\tau_k$.
\item[(iii)]
The asymptotic expansion (\ref{eq:lemman2}) holds for $z\in l_k$, $k=1,\ldots,M$, in the sense that, for any $p_k\in\mathbb{N}_0$,
\begin{equation}\label{eq:lehmanO}
\begin{alignedat}{2}
&f_{0}(z)=\sum_{j=0}^{p_k}a_{j}^{(k)}(z-\tau_{k})^{\gamma_{j}^{(k)}}
+\widetilde{f}_{\gamma_{p_k+1}^{(k)},\tau_{k}}(z),\\
&\widetilde{f}_{\gamma_{p_k+1}^{(k)},\tau_{k}}(z)=O\left((z-\tau_{k})^{\gamma_{p_k+1}^{(k)}}\right).
\end{alignedat}
\end{equation}
\end{itemize}

\begin{figure}[t]
\begin{center}
\includegraphics*[scale=0.9]{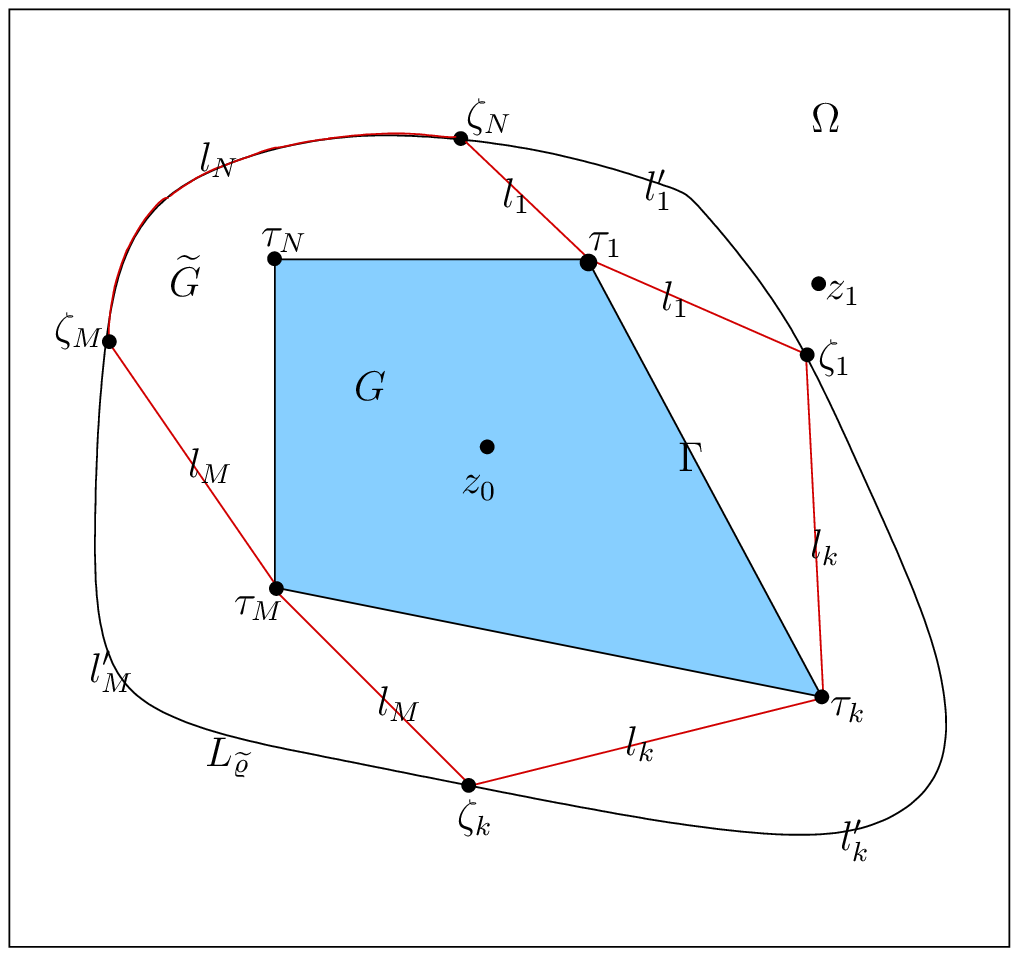}
\caption{The domain $\widetilde{G}$ in the proof of Theorem \ref{th:5}.}
\label{fig:tilG}
\end{center}
\end{figure}

We consider now the application of BKM/AB with only corner singularities, where we use  $p_{k}\in\mathbb{N}_{0}$
singular function for each non-special corner $\tau_k$, $k=1,2,\ldots,M$. In order to measure the BKM/AB error we set
\begin{equation}\label{eq:nuk}
\nu_{k}:=\min\{j>p_{k}:\ \gamma_{j}^{(k)}\notin\mathbb{N},\ a_{j}^{(k)}\neq0\},
\end{equation}
and assume that at least one of $\nu_k$'s is finite, otherwise the results become trivial.
The associated BKM/AB system $\{\eta_j\}$ is thus defined by $r_M=\sum_{k=1}^Mp_k$ singular functions of the form (\ref{eq:8})
and $n$ monomials (\ref{eq:9}). Accordingly, we let $\mathbb{P}_n^{A_{2}}$ denote the space of augmented polynomials:
\begin{equation}
\mathbb{P}_{n}^{A_{2}}:=\{p:p(z)=\sum_{j=1}^{r_M+n}t_j\eta_j(z),\,\, t_j\in\mathbb{C}\}.
\end{equation}
Clearly, the associated augmented polynomial $\widetilde{K}_{n}(z,z_{0})$ is
the best approximation to $K(z,z_{0})$ in $L^{2}(G)$ out of the space $\mathbb{P}^{A_{2}}_{n}$.

Let $\widetilde{\pi}_n$ denote the BKM/AB approximation resulting from $\mathbb{P}_{n}^{A_{2}}$.
Then we have the following:
\begin{theorem}\label{th:5}
Assume that $\Gamma$ is piecewise analytic without cusps and set $\varrho:=|\Phi(z_1)|$ and
$s^\star:=\min\{(2-\alpha_k)\gamma_{\nu_k}^{(k)}:\ 1\leq k\le M\}$. Then,
\begin{equation}\label{eq:27}
\|f_{0}-\widetilde{\pi}_{n}\|_{L^{\infty}(\overline{G})}\leq
c_{1}\sqrt{\log n}\frac{1}{n^{s^\star}}+c_{2}\frac{1}{R^{n}},\quad n\ge2,
\end{equation}
for any $R$, $1<R<\varrho$.
\end{theorem}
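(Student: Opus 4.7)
The plan is to mimic the four-step pattern used in Theorems \ref{th:1}, \ref{th2}, and \ref{th3}: first, construct a test element $P_n^\star \in \mathbb{P}_n^{A_2}$ that approximates $K(\cdot, z_0)$ in $L^2(G)$ with error $c_1/n^{s^\star} + c_2/R^n$; second, invoke the minimum property of the augmented kernel polynomial $\widetilde{K}_n(\cdot, z_0)$ in $\mathbb{P}_n^{A_2}$ to inherit the same bound for $\widetilde K_n$; third, use the equivalence Lemma~\ref{l:1} to convert the kernel approximation error into $\|f_0' - \widetilde{\pi}_n'\|_{L^2(G)}$; fourth, perform the Andrievskii--Simonenko $L^2$-to-$L^\infty$ transition at the cost of a $\sqrt{\log n}$ factor.

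The heart of the argument is step one. I would use the extended domain $\widetilde{G}$ constructed just above the theorem, together with the truncated expansion (\ref{eq:lehmanO}), to observe that once the augmented basis absorbs the leading $p_k$ algebraic terms $(z-\tau_k)^{\gamma_j^{(k)}}$ at each non-special corner $\tau_k$, the \emph{effective} leading corner singularity of the residual is of order $\gamma_{\nu_k}^{(k)}$: by definition of $\nu_k$ in (\ref{eq:nuk}), any intermediate terms with integer exponents are polynomials and are already captured by the monomial part of the basis. Translating to the derivative, which is what $K(\cdot, z_0) = (\pi r_0^2)^{-1} f_0'$ sees, the residual has local behavior $(z-\tau_k)^{\gamma_{\nu_k}^{(k)} - 1}$ at each $\tau_k$ and is otherwise analytic on $\overline{\widetilde{G}}$, with its nearest exterior singularity at $z_1$ on the level curve $L_\varrho$. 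A polynomial approximation argument in the spirit of Theorem~3.1 of \cite{SaSt08} then produces, for any $1 < R < \varrho$, a test approximant $P_n^\star \in \mathbb{P}_n^{A_2}$ with
\[
\|K(\cdot, z_0) - P_n^\star\|_{L^2(G)} \le c_1 \frac{1}{n^{s^\star}} + c_2 \frac{1}{R^n}.
\]
The exponent $s^\star = \min_k (2-\alpha_k)\gamma_{\nu_k}^{(k)}$ reflects the standard $L^2(G)$ polynomial approximation rate $n^{-(2-\alpha)\gamma}$ at a corner of interior angle $\alpha\pi$ for a function with local singularity $(z-\tau)^{\gamma}$, minimized over all non-special corners; the geometric term $R^{-n}$ reflects Walsh's maximal convergence on $\overline{\widetilde{G}}$, whose nearest exterior singular point of $f_0$ is $z_1$.

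The principal obstacle is exactly this adaptation of the construction in \cite{SaSt08}, namely verifying that the basis absorbs precisely the leading $p_k$ corner-singular contributions and that the effective leading exponent shifts from $\gamma_1^{(k)}$ (as in Theorem \ref{th3}) to $\gamma_{\nu_k}^{(k)}$, uniformly in $n$. A secondary, more technical, point lies in step four: Andrievskii's Lemma \ref{lem:andri-gaier}(i) is stated for polynomials, whereas $\widetilde{\pi}_n$ contains non-polynomial basis functions $(z-\tau_k)^{\gamma_j^{(k)}}$. I would address this by decomposing $\widetilde{\pi}_n$ into its polynomial part and the (fixed, finite, $n$-independent family of) corner-singular basis pieces, reducing the Andrievskii-type transition to a polynomial estimate as in the proof of Theorem \ref{th2}, or alternatively by invoking the generalization of Lemma \ref{lem:andri-gaier} developed in \cite{SaSt08}. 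Once the $L^\infty$ transition is in place, steps two and three follow at once from (\ref{eq:min}) and Lemma \ref{l:1}, and the claimed bound (\ref{eq:27}) is obtained after a harmless adjustment of $R$.
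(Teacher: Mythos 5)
Your outline coincides with the paper's proof in strategy: build a test approximant in $\mathbb{P}_n^{A_2}$, use the minimum property of $\widetilde{K}_n(\cdot,z_0)$, pass to $\|f_0'-\widetilde{\pi}_n'\|_{L^2(G)}$ via Lemma~\ref{l:1}, and finish with an Andrievskii-type $L^2$-to-$L^\infty$ transition. The substance of the theorem, however, lies in the step you defer to ``a polynomial approximation argument in the spirit of Theorem~3.1 of \cite{SaSt08}.'' The paper carries this out explicitly: it represents $f_0'$ by a Cauchy integral over $\partial\widetilde{G}$, subtracts the derivatives of the truncated Lehman sums $F_k$, and splits the difference as $g+h$ in (\ref{eq:sum}), where $g$ is carried by the arcs $l_k$ with density $O(|t-\tau_k|^{\gamma_{p_k+1}^{(k)}})$ and is approximated in $L^2(G)$ at the rate $n^{-\widetilde{s}}$, $\widetilde{s}=\min_k(2-\alpha_k)\gamma_{p_k+1}^{(k)}$, via Lemma~6 of \cite{AG}, while $h$, after deforming the cross-term contours from $l_r$ to $l_r'$, is analytic in $G_{\widetilde{\varrho}}$ and is approximated geometrically by Walsh's theorem. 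Your remark that intermediate integer-exponent terms are harmless is precisely the mechanism by which $\widetilde{s}$ is upgraded to $s^\star$: when $\gamma_{p_k+1}^{(k)}\in\mathbb{N}$ the corresponding term is transferred from the singular part $g$ to the analytic part $h$. So your heuristic is the right one, but it is not a corollary of \cite[Thm~3.1]{SaSt08} (which covers the un-augmented situation of Theorem~\ref{th3}); it requires this construction.

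The one point where your proposal would actually fail is the first of your two suggested routes for step four. Decomposing $\widetilde{\pi}_n$ into its polynomial part plus the corner-singular basis pieces and applying Lemma~\ref{lem:andri-gaier}(i) to the polynomial part alone does not work: although the family of singular functions is fixed, their coefficients in $\widetilde{\pi}_n$ depend on $n$, and the available $L^2$ bound controls only the derivative of the \emph{full} combination $f_0'-\widetilde{\pi}_n'$, not of the polynomial part separately. Without a uniform bound on those coefficients the termwise splitting gives nothing. What is needed is a version of Andrievskii's lemma valid for a polynomial plus singular terms with arbitrary $n$-dependent coefficients, with a constant independent of those coefficients; this is exactly \cite[Corollary~2.5]{MSS}, which the paper invokes here (and extends in Lemma~\ref{lem:andrii-3} for the mixed pole-and-corner case). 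Your second alternative is essentially this, modulo the attribution, so the proof can be completed along your lines provided that route is taken and the $g+h$ construction above is supplied.
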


\begin{proof}
Using Cauchy's integral formula for the derivative of the extension of $f_0$ we have, for $z\in G$,
\begin{equation}\label{eq:f0p}
\begin{alignedat}{2}
f_{0}^{\prime}(z)
&=\frac{1}{2\pi i}\int_{\partial\widetilde{G}}\frac{f_{0}(t)}{(t-z)^{2}}dt\\
&=\frac{1}{2\pi i}\sum_{k=1}^{M}\int_{l_k}\frac{f_{0}(t)}{(t-z)^{2}}dt +
\frac{1}{2\pi i}\int_{l_N}\frac{f_{0}(t)}{(t-z)^{2}}dt.
\end{alignedat}
\end{equation}

For each $\tau_k$, $k=1,\ldots,M$, we consider the first terms up to $p_k$, of the Lehman expansion (\ref{eq:lehmanO})
for $f_0$:
\begin{equation}\label{eq:Fk}
F_{k}(z):=\sum_{j=0}^{p_k}a_{j}^{(k)}(z-\tau_{k})^{\gamma_{j}^{(k)}}.
\end{equation}
Since the function $F_k(z)$, is analytic in $\widetilde{G}$ and continuous on $\partial\widetilde{G}$
we have, as in (\ref{eq:f0p}), for $z\in G$,
\begin{equation*}
F_k^{\prime}(z)=\frac{1}{2\pi i}\sum_{r=1}^{M}\int_{l_r}\frac{F_{k}(t)}{(t-z)^{2}}dt +
\frac{1}{2\pi i}\int_{l_N}\frac{F_{k}(t)}{(t-z)^{2}}dt.
\end{equation*}
Therefore,
\begin{equation*}
\begin{alignedat}{2}
\sum_{k=1}^M F_k^{\prime}(z)
&=\frac{1}{2\pi i}\sum_{k=1}^M\sum_{r=1}^{M}\int_{l_r}\frac{F_{k}(t)}{(t-z)^{2}}dt +
\frac{1}{2\pi i}\sum_{k=1}^M\int_{l_N}\frac{F_{k}(t)}{(t-z)^{2}}dt\\
&=\frac{1}{2\pi i}\sum_{k=1}^M\int_{l_k}\frac{F_{k}(t)}{(t-z)^{2}}dt
+\frac{1}{2\pi i}\sum_{k=1}^M\sum_{r=1\atop{r\neq k}}^{M}\int_{l_r}\frac{F_{k}(t)}{(t-z)^{2}}dt\\
&+\frac{1}{2\pi i}\sum_{k=1}^M\int_{l_N}\frac{F_{k}(t)}{(t-z)^{2}}dt.
\end{alignedat}
\end{equation*}
Hence, for $z\in G$,
\begin{equation}\label{eq:sum}
f_{0}^{\prime}(z)-\sum_{k=1}^{M}F_k^{\prime}(z)=g(z)+h(z),
\end{equation}
where,
\begin{equation}\label{eq:gdef}
g(z):=\frac{1}{2\pi i}\sum_{k=1}^{M}\int_{l_{k}}\frac{f_0(t)-F_k(t)}{(t-z)^{2}}dt,
\end{equation}
and
\begin{equation}\label{eq:honlr}
\begin{alignedat}{2}
h(z)
&:=\frac{1}{2\pi i}\int_{l_N}\frac{f_{0}(t)}{(t-z)^{2}}dt-\frac{1}{2\pi i}
\sum_{k=1}^M\sum_{r=1\atop{r\neq k}}^{M}\int_{l_r}\frac{F_{k}(t)}{(t-z)^{2}}dt  \\
&-\frac{1}{2\pi i}\sum_{k=1}^M\int_{l_N}\frac{F_{k}(t)}{(t-z)^{2}}dt.
\end{alignedat}
\end{equation}

Now, we denote by $l^\prime_r$, $r=1,\ldots,M$, the part of the level line $L_{\widetilde{\varrho}}$ that shares the same endpoints with $l_r$, so that $L_{\widetilde{\varrho}}=\{\cup_{r=1}^M l^\prime_r\}\cup l_N$ and $l_r\cup l^\prime_r$ is the boundary of a Jordan domain in $\Omega$; see  Figure~\ref{fig:tilG}.
Since, for $k\neq r$, the function $F_k(z)$, $z\in G$, is analytic in the interior of $l_r\cup l^\prime_r$ and
continuous on $l_r\cup l^\prime_r$,  we can replace in (\ref{eq:honlr}) the path of integration $l_r$ by $l^\prime_r$, with suitable orientation, i.e., for $z\in G$,
\begin{equation}\label{eq:honlpr}
\begin{alignedat}{2}
h(z)&=\frac{1}{2\pi i}\int_{l_N}\frac{f_{0}(t)}{(t-z)^{2}}dt-\frac{1}{2\pi i}
\sum_{k=1}^M\sum_{r=1\atop{r\neq k}}^{M}\int_{l^\prime_r}\frac{F_{k}(t)}{(t-z)^{2}}dt  \\
&-\frac{1}{2\pi i}\sum_{k=1}^M\int_{l_N}\frac{F_{k}(t)}{(t-z)^{2}}dt.
\end{alignedat}
\end{equation}
Observe that, by construction, $f_0$ is continuous on $l_N$ and $F_k$ is continuous on
$l_N\cup l^\prime_r$, for $k\neq r$. Thus, the function $h$ in (\ref{eq:honlpr}) is analytic in $G_{\widetilde{\varrho}}$ and by Walsh's maximal convergence theorem there exist a sequence of polynomials $\{t_n\}_{n=1}^{\infty}$ such that,
\begin{equation}\label{eq:24}
\|h-t_n\|_{L^{\infty}(\overline{G})}\preceq\frac{1}{R^{n}},\quad n\in\mathbb{N},
\end{equation}
where $1<R<\widetilde{\varrho}$.
Since we can choose $\zeta_1$ arbitrarily close to $z_1$,  (\ref{eq:24}) is valid for any $1<R<\varrho$.

The function $g$ in (\ref{eq:gdef}) consists of sums of integrals of the type,
$$
G(z)=\int_{l_{k}}\frac{g_{k}(t)}{(t-z)^{2}}dt,
$$
where in view of (\ref{eq:lehmanO}) and (\ref{eq:Fk}) we have , for $t\in l_k$,
$$
|g_k(t)|\preceq|t-\tau_{k}|^{\gamma_{p_k+1}^{(k)}}.
$$
Hence, by using the result of Lemma~$6$ in \cite{AG}, in conjunction with the remark following Theorem $1$ of the same paper and the triangle inequality, we conclude that there exists a sequence of polynomials $\{q_{n}\}_{n=1}^{\infty}$
satisfying
\begin{equation}\label{eq:25}
\|g-q_{n}\|_{L^{2}(G)}\preceq\frac{1}{n^{\widetilde{s}}},\quad n\in\mathbb{N},
\end{equation}
where $\widetilde{s}:=\min\{(2-\alpha_{k})\gamma_{p_k+1}^{(k)}:\,\,k=1,2,\ldots,M\}$.
This, combined with (\ref{eq:sum}), (\ref{eq:24}) and the triangle inequality, yields
\begin{equation}\label{eq:f-pn}
\|f_{0}^{\prime}-\sum_{k=1}^{M}F_k^{\prime}-(t_n+q_n)\|_{L^2(G)}\le
c_{1}\frac{1}{n^{\widetilde{s}}}+c_{2}\frac{1}{R^n},\quad n\in\mathbb{N}.
\end{equation}
Note that $\widetilde{s}=s^\star$, if $\gamma_{p_k+1}^{(k)}\notin\mathbb{N}$ for the index $k$ for which the minimum
is attained in the definition of $\widetilde{s}$. In the opposite case, where for the same index $k$,
it holds $\gamma_{p_k+1}^{(k)}\in\mathbb{N}$, we get $\widetilde{s}=s^\star$ in (\ref{eq:f-pn}) by simply subtracting
from $g(z)$ and adding to $h(z)$, in the right hand side of (\ref{eq:sum}), the derivative of the Cauchy integral on $l_N$, with density function $a_{p_k+1}^{(k)}(z-\tau_k)^{\gamma_{p_k+1}^{(k)}}$.
This observation and (\ref{eq:5}) imply that there exists a sequence of augmented polynomials
$\{\widetilde{p}_n\}$, with $\widetilde{p}_n\in\mathbb{P}^{A_2}_n$, such that,
\begin{equation}
\|K(\cdot,z_{0})-\widetilde{p}_n\|_{L^{2}(G)}\leq
c_{3}\frac{1}{n^{s^{\star}}}+c_{4}\frac{1}{R^{n}},\quad n\in\mathbb{N},
\end{equation}
and the rest goes in similar lines as in the proof of Theorem~\ref{th:1}, except here we use the version of
Andrievskii's lemma for functions with anti-derivatives in the space $\mathbb{P}^{A_2}_n$, given in \cite[Corollary~2.5]{MSS}.
These yield,
\begin{equation}
\|f_{0}-\widetilde{\pi}_{n}\|_{L^{\infty}(\overline{G})}\leqslant
c_{5}\sqrt{\log n}\frac{1}{n^{s^{\star}}}+c_{6}\sqrt{\log n}\frac{1}{R^{n}},\quad n\ge2,
\end{equation}
and (\ref{eq:27}) follows with a different, but still less than $\varrho$, $R$.
\end{proof}

\begin{remark}\label{rem:bmab}
Note that  $n^{s^\star}\le R^n$, as $n\to\infty$. Therefore from (\ref{eq:27}) we recover the result of \cite[Thm.~3.1]{MSS}. However, Theorem~\ref{th:5} above gives, in addition, the precise dependence of the BKM/AB error on the pole singularities of $f_0$  for small values of $n$. We also note the lower estimate
\begin{equation*}
\|f_{0}-\widetilde{\pi}_{n}\|_{L^{\infty}(\overline{G})}\ge
c\frac{1}{n^{s^{\star}}},\quad n\in\mathbb{N},
\end{equation*}
established in \cite[Thm.~3.2]{MSS}.
\end{remark}

\subsection{BKM/AB with pole and corner singularities}
We consider now the application of the BKM/AB with both pole and corner singular basis function of the form
studied in Sections \ref{ss:pole} and \ref{ss:corner}. Regarding poles we recall, in particular, our assumptions in Section~\ref{ss:pole}. That is, the nearest singularities of $f_{0}$ in $\Omega$ are $\kappa$ poles, each one of order $k_{j}$ at $z_{j}$, $j=1,2,\ldots \kappa$, where $|\Phi(z_{1})|\le|\Phi(z_{2})|\le\cdots\le|\Phi(z_{\kappa})|$, while the other singularities of $f_{0}$ occur at points $z_{\kappa+1},z_{\kappa+2},\ldots$, where
$|\Phi(z_{\kappa})|<|\Phi(z_{\kappa+1})|\leqslant|\Phi(z_{\kappa+2})|\leqslant\cdots$.
Therefore, for the BKM/AB we consider the system $\{\eta_j\}$, defined  by:
\begin{itemize}
\item[(i)]
the $\kappa$ pole singular functions (\ref{eq:7}), with $m_j=1$, $j=1,2,\ldots,\kappa$;
\item[(ii)]
the $r_M=\sum_{k=1}^Mp_k$ corner singular functions of the form (\ref{eq:8});
\item[(iii)]
and the $n$ monomials (\ref{eq:9}).
\end{itemize}
Accordingly, we let $\mathbb{P}_n^{A_{3}}$ denote the space,
$$
\mathbb{P}_{n}^{A_{3}}:=\{p:p(z)=
\sum_{j=1}^{\kappa+r_M+n}t_{j}\eta_{j}(z),\,\, t_{j}\in\mathbb{C}\},
$$
and note that the associated augmented polynomial $\widetilde{K}_{n}(z,z_{0})$ is
the best approximation to $K(z,z_{0})$ in $L^{2}(G)$ out of the space $\mathbb{P}^{A_{3}}_{n}$.

The following result is a version of Andrievsii's lemma for functions with anti-derivatives in   $\mathbb{P}_n^{A_{3}}$. It will be used below, in the proof of the concluding theorem of this section (in the transition from the $L^2(G)$-norm to the $L^\infty(\overline{G})$-norm, where we establish the BKM/AB error in approximating $f_0$ by the augmented polynomials $\widetilde{\pi}_n$ derived from $\mathbb{P}_n^{A_{3}}$.

\begin{lemma}\label{lem:andrii-3}
Assume that $\Gamma$ is piecewise analytic without cusps and let $t_k\in\Gamma$, $k=1,2,\ldots,m$. Also, let
$P\in\mathbb{P}_n$ and $q$ be a fixed polynomial with no zeros on $\overline{G}$. Assume further that
for some constants $a_{n,k,j}$, $k=1,2,\ldots,m$, $j=1,2,\ldots,r_k$, the function
$$
L(z):=\frac{P(z)}{q(z)}+\sum_{k=1}^m\sum_{j=1}^{r_k}a_{n,k,j}f_{\beta_j^{(k)},t_k}(z),
$$
where $f_{\beta_{j}^{(k)},t_k}(z)=(z-t_k)^{\beta_{j}^{(k)}}$, with  $\beta_{j}^{(k)}>0$ non-integer, satisfies: $L(z_{0})=0$ and $\|L^{\prime}\|_{L^{2}(G)}\leq M$. Then,
\begin{equation}\label{eq:36}
\|L\|_{L^{\infty}(\overline{G})}\leq CM\sqrt{\log n},
\end{equation}
where $C$ is a constant independent of $n$ and of $\{\{a_{n,k,j}\}_{j=1}^{r_k}\}_{k=1}^m$.
\end{lemma}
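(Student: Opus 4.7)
The plan is to reduce the mixed setting of this lemma to the purely polynomial-plus-algebraic one covered by \cite[Corollary~2.5]{MSS}, by multiplying $L$ through by the fixed denominator $q$, and then to control the resulting derivative norm via a Poincar\'e-type inequality on the Bergman space $L^{2}_{a}(G)$. This unifies, in a natural way, Lemma~\ref{lem:andri-gaier}(ii) (which handles the $P/q$ part alone) with \cite[Corollary~2.5]{MSS} (which handles the polynomial-plus-algebraic part alone).

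First I would set $\widetilde{L}(z):=q(z)L(z)$. Since $q$ is continuous and non-vanishing on the compact set $\overline{G}$, there are positive constants with $m_{q}\le|q(z)|\le M_{q}$ on $\overline{G}$, so $\|L\|_{L^\infty(\overline{G})}\asymp\|\widetilde{L}\|_{L^\infty(\overline{G})}$ (with constants depending only on $q$ and $G$), and it suffices to bound $\|\widetilde{L}\|_{L^\infty(\overline{G})}$. Taylor-expanding $q$ about each $t_{k}$ yields
\[
q(z)(z-t_{k})^{\beta_{j}^{(k)}} \;=\; \sum_{l=0}^{\deg q}\frac{q^{(l)}(t_{k})}{l!}\,(z-t_{k})^{\beta_{j}^{(k)}+l},
\]
and since each exponent $\beta_{j}^{(k)}+l$ is again positive and non-integer, I obtain
\[
\widetilde{L}(z) \;=\; P(z)+\sum_{k=1}^{m}\sum_{j'}\hat{a}_{n,k,j'}(z-t_{k})^{\hat{\beta}_{j'}^{(k)}},
\]
with $P\in\mathbb{P}_{n}$, all $\hat{\beta}_{j'}^{(k)}>0$ non-integer, and $\widetilde{L}(z_{0})=q(z_{0})L(z_{0})=0$. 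This places $\widetilde{L}$ exactly in the framework of \cite[Corollary~2.5]{MSS}.

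To invoke that corollary I need $\|\widetilde{L}'\|_{L^{2}(G)}=O(M)$. From $\widetilde{L}'=q'L+qL'$,
\[
\|\widetilde{L}'\|_{L^{2}(G)}\;\le\;\|q'\|_{L^\infty(\overline{G})}\|L\|_{L^{2}(G)}+\|q\|_{L^\infty(\overline{G})}M,
\]
so the crux is to bound $\|L\|_{L^{2}(G)}$ without circularity. Each $(z-t_{k})^{\beta_{j}^{(k)}}$ extends continuously (by $0$) to $t_{k}$, so $L$ is bounded and continuous on $\overline{G}$ and analytic in $G$, hence $L\in L^{2}_{a}(G)$ with $L(z_{0})=0$; I would then apply the Bergman-space Poincar\'e inequality $\|f\|_{L^{2}(G)}\le c(G,z_{0})\|f'\|_{L^{2}(G)}$ for $f\in L^{2}_{a}(G)$ with $f(z_{0})=0$ to conclude $\|L\|_{L^{2}(G)}\le cM$, and therefore $\|\widetilde{L}'\|_{L^{2}(G)}\le CM$ with $C$ independent of $n$ and of the coefficients $\{a_{n,k,j}\}$. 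Applying \cite[Corollary~2.5]{MSS} to $\widetilde{L}$ then yields $\|\widetilde{L}\|_{L^\infty(\overline{G})}\le C_{1}M\sqrt{\log n}$, and dividing by $m_{q}$ delivers \eqref{eq:36}.

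The hard part will be the Poincar\'e step, since bounding $\|L\|_{L^{2}(G)}$ directly by $\|L\|_{L^\infty(\overline{G})}$ would be circular, and the constant in $\|L\|_{L^{2}(G)}\le c(G,z_{0})\|L'\|_{L^{2}(G)}$ must be shown to be independent of $n$ and of the coefficients. The natural route is to transfer to $\mathbb{D}$ via a conformal map $\phi:\mathbb{D}\to G$ with $\phi(0)=z_{0}$, where the inequality is elementary from the orthonormal monomial expansion of analytic functions vanishing at the origin; care is required so that the possible blow-up of $|\phi'|$ at preimages of corners of $\partial G$ does not destroy the transfer, but under the no-cusp assumption $|\phi'|^{2}$ is integrable on $\mathbb{D}$ (it equals the area of $G$), which is enough to push the disk inequality through. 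A secondary point is that \cite[Corollary~2.5]{MSS}, although stated there for the particular exponent list $\{\gamma_{j}^{(k)}\}$, extends verbatim to the shifted exponents $\beta_{j}^{(k)}+l$, its constants depending only on the fixed list of non-integer exponents.
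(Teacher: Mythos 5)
Your proof is essentially correct, but it follows a genuinely different route from the paper's. The paper does not form $qL$; instead it introduces the auxiliary function $S(z):=\int_{z_0}^{z}q^{2}(\zeta)L'(\zeta)\,d\zeta$, whose derivative $q^{2}L'=(P'q-Pq')+\sum a_{n,k,j}\,q^{2}f'_{\beta_j^{(k)},t_k}$ involves no cross term, so no bound on $\|L\|_{L^2(G)}$ is ever needed; the price is that one must first extend the approximation machinery of \cite[\S2]{MSS} (Theorems~2.1--2.2 through Corollary~2.5) to the \emph{weighted} singular functions $q^{2}f_{\beta,\tau}$, and then recover $L$ from $S$ via $L=q^{-2}S+2\int_{z_0}^{z}q^{-3}q'S$. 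Your version instead keeps the singular functions in pure power form by Taylor-expanding the polynomial $q$ about each $t_k$ (so \cite[Corollary~2.5]{MSS} applies essentially verbatim, with the shifted non-integer exponents $\beta_j^{(k)}+l$), but you must then control the cross term $q'L$ in $\widetilde{L}'=q'L+qL'$, which is where your Poincar\'e step enters. That step is the one place needing more care than you give it: mere integrability of $|\phi'|^{2}$ over $\mathbb{D}$ is \emph{not} by itself enough, because a Dirichlet-space function $g$ with $g(0)=0$ can grow like $\|g\|_{\mathcal D}\big(\log\frac{1}{1-|w|^{2}}\big)^{1/2}$, so the transfer requires $\int_{\mathbb D}\log\frac{1}{1-|w|^{2}}\,|\phi'(w)|^{2}\,dA(w)<\infty$. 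For a piecewise analytic boundary without cusps this does hold, since $|\phi'(w)|\asymp|w-w_k|^{\alpha_k-1}$ near the corner preimages with $0<\alpha_k<2$ and the logarithm is locally integrable against any power exceeding $-2$; with that correction your bound $\|L\|_{L^{2}(G)}\le c(G,z_0)\,M$, and hence the whole argument, goes through with constants independent of $n$ and of the coefficients. In short: the paper trades your Poincar\'e inequality for an extension of the \cite{MSS} approximation results to $q^{2}f_{\beta,\tau}$; your route is arguably lighter on the \cite{MSS} side but must justify the Bergman-space Poincar\'e inequality on $G$, which is a true statement for these domains once the logarithmic refinement above is supplied.
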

\begin{proof}
The proof is based on Andrievskii's lemma for singular algebraic functions given in
\cite[Corollary~2.5]{MSS} and relies on the results contained in \cite[\S2]{MSS}.
The details of the derivation are as follows:

First, we note that our assumption implies that $\Gamma$ is a quasiconformal curve. Then, it is straightforward to verify that the results of Theorems~2.1 and 2.2 (and hence the result of Corollary~2.2) in \cite{MSS} hold true for functions of the form $q^2(z)f_{\beta,\tau}(z)$, where $f_{\beta,\tau}(z):=(z-\tau)^\beta$, with $\tau\in\Gamma$ and $\beta>0$ non-integer. That is,
\begin{equation}\label{eq:Cor22ext}
\inf_{p\in\mathbb{P}_{n}}\|q^2f_{\beta,\tau}-p\|_{L^2({G})}\asymp\frac{1}{n^{(2-a)(\beta+1)}},
\end{equation}
where $\alpha\pi$ ($0<\alpha<2$) denotes the interior angle of $\Gamma$ at $\tau$.

With (\ref{eq:Cor22ext}) at hand it is, again, straightforward to verify consequentially that the results of
Theorem~2.3, Corollaries~2.3 and 2.4, Lemma~2.3 and Corollary~2.5, of \cite{MSS}, hold true if we replace $f_{\beta,\tau}^{\prime}$ by $q^{2}f_{\beta,\tau}^{\prime}$. In particular,  Corollary~2.5 of \cite{MSS} applied to the function
$$
S(z):=\int_{z_0}^zq^2(z)L^\prime(z)dz,
$$
where the path of integration $[z_0,z]$ is any rectifiable arc in $G$, gives that
\begin{equation*}
\|S\|_{L^{\infty}(\overline{G})}\leq c_1\sqrt{\log n}\,\|S^\prime\|_{L^2(G)}.
\end{equation*}
(Note that $S(z_0)=0$ and $S^\prime(z)=q^{2}(z)L^\prime(z)$.) Therefore, our hypothesis on $\|L^\prime\|_{L^2(G)}$  yields the inequality
\begin{equation}\label{eq:S-andri}
\|S\|_{L^{\infty}(\overline{G})}\leq c_2\sqrt{\log n}\,M.
\end{equation}
On the other hand we have,
$$
L(z)=\int_{z_0}^zq^{-2}(z)S^\prime(z)dz=
q^{-2}(z)S(z)+2\int_{z_0}^zq^{-3}(z)q^\prime(z)S(z)dz,
$$
which implies
$$
\|L\|_{L^{\infty}(\overline{G})}\leq c_3\,\|S\|_{L^{\infty}(\overline{G})}
$$
and (\ref{eq:36}) follows from (\ref{eq:S-andri}); cf.\ \cite[p.~122]{Ga87}.
\end{proof}

The concluding result of this section provides the theoretical justification for the use of the BKM/AB, with both corner and pole singularities.

Let $\widetilde{\pi}_n$ denote the BKM/AB approximation to $f_0$ resulting from the space  $\mathbb{P}_n^{A_3}$. Then we have the following:
\begin{theorem}\label{th:c}
Assume that $\Gamma$ is piecewise analytic without cusps and
set $\varrho:=|\Phi(z_{\kappa+1})|$ and  $s^{\star}:=\min\{(2-\alpha_k)\gamma_{\nu_k}^{(k)}:\ 1\leq k\le M\}$. Then,
\begin{equation}\label{eq:thc}
\|f_{0}-\widetilde{\pi}_{n}\|_{L^{\infty}(\overline{G})}\leq
c_{1}\sqrt{\log n}\frac{1}{n^{s^{\star}}}+c_{2}\frac{1}{R^{n}},\quad n\ge2,
\end{equation}
for any $R$, $1<R<\varrho$.
\end{theorem}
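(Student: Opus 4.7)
The plan is to merge the arguments of Theorems~\ref{th2} and~\ref{th:5}, using Lemma~\ref{lem:andrii-3} for the final transition from the $L^{2}(G)$-norm to the $L^{\infty}(\overline{G})$-norm. Setting $q(z):=\prod_{j=1}^{\kappa}(z-z_{j})^{k_{j}}$, the function $q\cdot f_{0}$ extends analytically across $\Gamma$ to $G_{\varrho}$, $\varrho=|\Phi(z_{\kappa+1})|$, apart from the corner-type singularities of $f_{0}$ at the non-special $\tau_{k}$. Repeating the geometric construction of the proof of Theorem~\ref{th:5}, but now with $\widetilde{\varrho}<\varrho=|\Phi(z_{\kappa+1})|$ arbitrarily close to $\varrho$, I would build an enlarged Jordan domain $\widetilde{G}$ (bounded by arcs $l_{k}$ emanating from the $\tau_{k}$ and a part $l_{N}$ of $L_{\widetilde{\varrho}}$) that now encloses all of $z_{1},\dots,z_{\kappa}$.

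Applying Cauchy's integral formula to $qf_{0}$ on $\partial\widetilde{G}$ and subtracting the partial Lehman sums $qF_{k}$, where $F_{k}(z):=\sum_{j=0}^{p_{k}}a_{j}^{(k)}(z-\tau_{k})^{\gamma_{j}^{(k)}}$, would give, exactly as in the proof of Theorem~\ref{th:5}, the decomposition
\begin{equation*}
(qf_{0})^{\prime}(z)-\sum_{k=1}^{M}(qF_{k})^{\prime}(z)=g(z)+h(z),\quad z\in G,
\end{equation*}
where $h$ is analytic in $G_{\widetilde{\varrho}}$ and $g$ is a sum of localized Cauchy integrals over the arcs $l_{k}$ with densities bounded by $|t-\tau_{k}|^{\gamma_{p_{k}+1}^{(k)}}$. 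Walsh's maximal convergence theorem supplies polynomials $t_{n}$ with $\|h-t_{n}\|_{L^{\infty}(\overline{G})}\ole R^{-n}$ for any $1<R<\widetilde{\varrho}$, while Lemma~6 of~\cite{AG}, together with the small index-adjustment explained at the end of that proof, produces polynomials $q_{n}$ with $\|g-q_{n}\|_{L^{2}(G)}\ole n^{-s^{\star}}$. Dividing through by $q$ reintroduces rational terms with poles precisely among the $z_{j}$, so the antiderivative of $[\sum_{k=1}^{M}(qF_{k})^{\prime}+t_{n}+q_{n}]/q$, properly normalised, is an augmented polynomial $\widetilde{p}_{n}\in\mathbb{P}_{n}^{A_{3}}$ satisfying $\|f_{0}^{\prime}-\widetilde{p}_{n}^{\prime}\|_{L^{2}(G)}\ole n^{-s^{\star}}+R^{-n}$. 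The minimum property of $\widetilde{K}_{n}$ in $\mathbb{P}_{n}^{A_{3}}$ together with Lemma~\ref{l:1} then transfers this bound to $\|f_{0}^{\prime}-\widetilde{\pi}_{n}^{\prime}\|_{L^{2}(G)}$.

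The main obstacle is the passage from this $L^{2}(G)$-bound to the target $L^{\infty}(\overline{G})$-bound on $f_{0}-\widetilde{\pi}_{n}$, and this is exactly what Lemma~\ref{lem:andrii-3} has been tailored for. By construction, both $\widetilde{\pi}_{n}$ and $\widetilde{p}_{n}$ split as $P(z)/q(z)+\sum_{k=1}^{M}\sum_{j=1}^{p_{k}}c_{n,k,j}(z-\tau_{k})^{\gamma_{j}^{(k)}}$ up to constants, so the Andrievskii--Simonenko scheme employed in the proof of Theorem~\ref{th:5} goes through \emph{verbatim} with Lemma~\ref{lem:andrii-3} replacing \cite[Corollary~2.5]{MSS}: applied to $L:=\widetilde{\pi}_{n}-\widetilde{p}_{n}$, which vanishes at $z_{0}$ and whose derivative is controlled in $L^{2}(G)$ by the previous bound (thanks to the $L^{2}$-minimality of $\widetilde{\pi}_{n}$ and the triangle inequality), it yields
\begin{equation*}
\|f_{0}-\widetilde{\pi}_{n}\|_{L^{\infty}(\overline{G})}\ole\sqrt{\log n}\left(\frac{1}{n^{s^{\star}}}+\frac{1}{R^{n}}\right),\quad n\ge 2.
\end{equation*}
A small shrinking of $R$, still less than $\varrho$, absorbs the $\sqrt{\log n}$ factor next to $R^{-n}$ and yields the desired estimate~(\ref{eq:thc}).
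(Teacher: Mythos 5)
Your overall architecture is the same as the paper's: enlarge the domain past the poles $z_1,\dots,z_\kappa$ by placing $\zeta_1$ near $z_{\kappa+1}$, remove those poles by multiplying with a polynomial vanishing at the $z_j$, split the resulting function via Cauchy's formula into a singular part $g$ (handled by \cite[Lemma~6]{AG}) and an analytic part $h$ (handled by Walsh's theorem), divide back, invoke the $L^2$-minimality of $\widetilde{K}_n$ together with Lemma~\ref{l:1}, and finish with Lemma~\ref{lem:andrii-3}. The gap is in the pole-removal step. You decompose $(qf_0)'-\sum_k(qF_k)'$, that is, the derivative of the \emph{product} $q\,(f_0-\sum_kF_k)$, whereas the minimality argument requires an approximation of $f_0'-\sum_kF_k'$ itself. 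Writing $E_n:=(qf_0)'-\sum_k(qF_k)'-(t_n+q_n)$, your candidate $\widetilde{p}_n':=\big[\sum_k(qF_k)'+t_n+q_n\big]/q$ satisfies $f_0'-\widetilde{p}_n'=-(q'/q)\,f_0+E_n/q$, and the term $(q'/q)f_0$ is a fixed, generally nonzero bounded function, hence $O(1)$ in $L^2(G)$ rather than $o(1)$; the asserted bound $\|f_0'-\widetilde{p}_n'\|_{L^2(G)}\ole n^{-s^\star}+R^{-n}$ therefore does not follow. In addition, $(qF_k)'/q=F_k'+(q'/q)F_k$ contains terms of the form $(z-\tau_k)^{\gamma_j^{(k)}}/(z-z_i)$, which do not lie in $\mathbb{P}_n^{A_3}$, so $\widetilde{p}_n'$ is not even an admissible competitor in the minimum property of the augmented kernel polynomials.

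The paper circumvents both difficulties by multiplying the \emph{derivative} rather than the function: it sets $Q(z)=\prod_{j}(z-z_j)^{k_j+1}$ (the exponent $k_j+1$ because $f_0'$ has poles of that order, matching the pole basis functions $\eta_j$) and works with $Qf_0'$ and $QF_k'$ together with their antiderivatives $F=\int_{z_0}^zQf_0'$ and $G_k=\int_{z_0}^zQF_k'$. Cauchy's formula applied to $F$ and the $G_k$ produces a decomposition of $Q\,(f_0'-\sum_kF_k')$, which upon division by $Q$ lands exactly on $f_0'-\sum_kF_k'$, while the integration-by-parts identities for $F-d_k-G_k$ supply the density bound $|F(t)-d_k-G_k(t)|\ole|t-\tau_k|^{\gamma_{p_k+1}^{(k)}}$ on $l_k$ that drives the $n^{-s^\star}$ rate. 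A secondary point: applying Lemma~\ref{lem:andrii-3} to the single difference $\widetilde{\pi}_n-\widetilde{p}_n$ only controls $\|\widetilde{\pi}_n-\widetilde{p}_n\|_{L^\infty(\overline{G})}$; since no $L^\infty$ bound on $f_0-\widetilde{p}_n$ is available, you still need the telescoping Andrievskii--Simonenko scheme over differences of the $\widetilde{\pi}$'s themselves to reach $\|f_0-\widetilde{\pi}_n\|_{L^\infty(\overline{G})}$.
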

\begin{proof}
As in the proof of Theorem~\ref{th2}, we set $Q(z):=\prod_{j=1}^{\kappa}(z-z_{j})^{k_{j}+1}$. The result (\ref{eq:thc}) will emerge by working as in the proof of Theorem~\ref{th:5}. The basic idea is to consider, in a bigger domain $\widetilde{G}$, the anti-derivatives $F$ and $G_k$ of the functions $Qf_0^\prime$ and $QF_k^\prime$, respectively,
in the place of the functions $f_0$ and $F_k$. The details of the derivation are as follows:

We note that the function $Qf_0^\prime$ shares the same analytic properties with $f_0^\prime$, apart from the fact that it has the singularities at the points $z_j$,
$j=1,\ldots,\kappa$, all removed.
Therefore, the function
\begin{equation}\label{eq:Fdef}
F(z):=\int_{z_{0}}^{z}Q(\zeta)f_{0}^{\prime}(\zeta)d\zeta,
\end{equation}
can be extended analytically to a larger domain $\widetilde{G}$ than the one considered in Section~\ref{ss:corner}.
This larger domain $\widetilde{G}$ is obtained by choosing the point $\zeta_1$ close to the nearest pole $z_{\kappa+1}$ of $Qf_0^\prime$ in $\Omega$, but inside the level curve $L_\varrho$, where now
$\varrho:=|\Phi(z_{\kappa+1})|$. The remaining part of the construction of $\widetilde{G}$ is exactly the same as
in Section~\ref{ss:corner}.

It follows therefore that (\ref{eq:Fdef}) is valid for $z\in\widetilde{G}$, provided the arc of integration $[z_0,z]$ lies on
$\widetilde{G}\cup\partial\widetilde{G}\setminus\{\cup_{k=1}^M\tau_k\}$ and is rectifiable. (This is always possible because $\partial\widetilde{G}$ is piecewise analytic.)
Since the derivative of $f_0$ near $\tau_k$ can be obtained by termwise differentiation of the expansion (\ref{eq:lehmanO}), (cf.\ \cite[p.~1448]{Lehman}) and since any power in the resulting expansion is bigger than $-\frac{1}{2}$,
we see that $f_0^\prime$ is integrable along any
rectifiable arc in $\widetilde{G}$ with one endpoint at $\tau_k$. Therefore, integration by parts gives, for
$z\in\widetilde{G}\cup\partial\widetilde{G}$,
\begin{equation}\label{eq:Feq}
F(z)=Q(z)f_{0}(z)-\int_{z_{0}}^{\tau_{k}}Q^{\prime}(\zeta)f_{0}(\zeta)d\zeta
-\int_{\tau_{k}}^{z}Q^{\prime}(\zeta)f_{0}(\zeta)d\zeta,
\end{equation}
where we made use of the normalization of $f_0$ at $z_0$.
This shows that $F$ is continuous on $\partial\widetilde{G}$ and analytic and on
$\partial\widetilde{G}$, except for the endpoints $\tau_k$.
By arguing as in (\ref{eq:f0p}) we have, for $z\in G$,
\begin{equation}\label{eq:Fp}
Q(z)f_0^\prime(z)=F^{\prime}(z)=\frac{1}{2\pi i}\sum_{k=1}^{M}\int_{l_k}\frac{F(t)}{(t-z)^{2}}dt
+\frac{1}{2\pi i}\int_{l_N}\frac{F(t)}{(t-z)^{2}}dt.
\end{equation}

Similar properties to those of $F$ apply to the anti-derivative
\begin{equation}\label{eq:Gkdef}
G_k(z):=\int_{z_{0}}^{z}Q(\zeta)F_k^{\prime}(\zeta)d\zeta,
\end{equation}
of $QF_k^{\prime}$, $k=1,\ldots,M$. That is, for $z\in\widetilde{G}\cup\partial\widetilde{G}$,
\begin{equation}\label{eq:Gkeq}
\begin{alignedat}{2}
G_k(z)=Q(z)F_k(z)&-Q(z_0)F_k(z_0)\\
&-\int_{z_{0}}^{\tau_{k}}Q^{\prime}(\zeta)F_k(\zeta)d\zeta
-\int_{\tau_{k}}^{z}Q^{\prime}(\zeta)F_k(\zeta)d\zeta,
\end{alignedat}
\end{equation}
and, for $z\in G$,
\begin{equation}\label{eq:QFp}
Q(z)F_k^\prime(z)=G_k^{\prime}(z)=\frac{1}{2\pi i}\sum_{r=1}^{M}\int_{l_r}\frac{G_k(t)}{(t-z)^{2}}dt
+\frac{1}{2\pi i}\int_{l_N}\frac{G_k(t)}{(t-z)^{2}}dt.
\end{equation}

Next, by combining (\ref{eq:Feq}) and (\ref{eq:Gkeq}) we get,
\begin{equation}\label{eq:F-Gk-dk}
F(z)-d_k-G_{k}(z)=Q(z)[f_0(z)-F_k(z)]-
\int_{\tau_k}^zQ^\prime(\zeta)[f_0(z)-F_k(\zeta)]d\zeta,
\end{equation}
where
\begin{equation*}
d_k:=Q(z_0)F_k(z_0)-\int_{z_0}^{\tau_k}Q^\prime(\zeta)[f_0(\zeta)-F_k(\zeta)]d\zeta.
\end{equation*}
This and (\ref{eq:lehmanO}) lead to
\begin{equation}\label{eq:FdkGkatlk}
|F(z)-d_k-G_{k}(z)|\preceq|z-\tau_{k}|^{\gamma_{p_k+1}^{(k)}},\quad z\in l_k.
\end{equation}

By reasoning as in the proof of Theorem \ref{th:5} we conclude, by using (\ref{eq:Fp}) and (\ref{eq:QFp}) that, for $z\in G$,
\begin{equation}\label{eq:Qfsum}
Q(z)f_{0}^{\prime}(z)-Q(z)\sum_{k=1}^{M}F_k^{\prime}(z)=g(z)+h(z),
\end{equation}
where the singular part
\begin{equation}\label{eq:gFdef}
g(z):=\frac{1}{2\pi i}\sum_{k=1}^{M}\int_{l_{k}}\frac{F(t)-d_k-G_k(t)}{(t-z)^{2}}dt,
\end{equation}
of the splitting (\ref{eq:Qfsum}) can be approximated, eventually,  by a sequence of polynomials $\{q_{n}\}_{n=1}^{\infty}$
at a polynomial rate, viz.,
\begin{equation}
\|g-q_{n}\|_{L^{2}(G)}\preceq\frac{1}{n^{s^\star}},\quad n\in\mathbb{N},
\end{equation}
with $s^{\star}:=\min\{(2-\alpha_k)\gamma_{\nu_k}^{(k)}:\ 1\leq k\le M\}$
and the analytic part
\begin{equation}\label{eq:hFonlr}
\begin{alignedat}{2}
h(z)
&:=\frac{1}{2\pi i}\sum_{k=1}^M\int_{l_k}\frac{d_k}{(t-z)^{2}}dt+
\frac{1}{2\pi i}\int_{l_N}\frac{F(t)}{(t-z)^{2}}dt\\
&-\frac{1}{2\pi i}
\sum_{k=1}^M\sum_{r=1\atop{r\neq k}}^{M}\int_{l_r}\frac{G_{k}(t)}{(t-z)^{2}}dt
-\frac{1}{2\pi i}\sum_{k=1}^M\int_{l_N}\frac{G_{k}(t)}{(t-z)^{2}}dt.
\end{alignedat}
\end{equation}
can be approximated by a sequence of polynomials $\{t_{n}\}_{n=1}^{\infty}$
at a geometric rate, viz.,
\begin{equation}
\|h-t_n\|_{L^{\infty}(\overline{G})}\preceq\frac{1}{R^{n}},\quad n\in\mathbb{N},
\end{equation}
where $1<R<\varrho$. Hence using the triangle inequality we get
\begin{equation}
\|Q(f_{0}^{\prime}-\sum_{k=1}^{M}F_k^{\prime})-(t_n+q_n)\|_{L^2(G)}\le
c_{1}\frac{1}{n^{s^\star}}+c_{2}\frac{1}{R^n}.
\end{equation}
This implies
\begin{equation}
\left\|f_{0}^{\prime}-\sum_{k=1}^{M}F_k^{\prime}-\frac{t_n+q_n}{Q}\right\|_{L^2(G)}\le
\frac{c}{d^{\kappa+\xi}}\left[c_{1}\frac{1}{n^{s^\star}}+c_{2}\frac{1}{R^n}\right],
\end{equation}
where $d:=\min_{j=1,2,\ldots,\kappa}\{|z-z_{j}|:\,z\in \Gamma\}$ and
$\xi:=\sum_{j=1}^{\kappa}k_j$. Thus, from (\ref{eq:5}) we conclude
there exists a sequence of augmented polynomials
$\{\widetilde{p}_n\}$, where $\widetilde{p}_n\in\mathbb{P}^{A_3}_n$, such that,
\begin{equation}
\|K(\cdot,z_{0})-\widetilde{p}_n\|_{L^{2}(G)}\leq
c_{3}\frac{1}{n^{s^{\star}}}+c_{4}\frac{1}{R^{n}},\quad n\in\mathbb{N},
\end{equation}
Therefore, using the minimum property of the augmented kernel polynomials,  we have
\begin{equation}
\|K(\cdot,z_0)-\widetilde{K}_n(\cdot,z_0)\|_{L^{2}(G)}\leq c_{3}\frac{1}{n^{s^{\star}}}+c_{4}\frac{1}{R^{n}},\quad n\in\mathbb{N},
\end{equation}
and this, in conjunction with the equivalence Lemma \ref{l:1}, yields that
\begin{equation}
\|f_{0}^{\prime}-\widetilde{\pi}_{n}^{\prime}\|_{L^{2}(G)}\leq c_{5}\frac{1}{n^{s^{\star}}}+c_{6}\frac{1}{R^{n}},\quad n\in\mathbb{N}.
\end{equation}

Since,
\begin{equation}
\widetilde{\pi}_n(z)=\frac{P(z)}{q(z)}+\sum_{k=1}^M\sum_{j=0}^{p_k}a_{n,k,j}(z-\tau_k)^{\gamma_j^{(k)}},
\end{equation}
where $q(z):=\prod_{j=1}^\kappa(z-z_j)^{k_j}$ and $P(z)$ is a polynomial of degree $n+\xi$, the rest goes
as the concluding part of the proof of Theorem~\ref{th:5}, except here we use the result of Lemma~\ref{lem:andrii-3} in the place of \cite[Corollary~2.5]{MSS}.
\end{proof}

\section{numerical results}\label{sec:5}

In this section we present numerical examples, that illustrate the convergence results predicted by the theory of Sections \ref{sec:3} and \ref{sec:4}, regarding the following four errors:
 \begin{equation}\label{eq:37}
 E_{n,2}(K,G):=\|K(\cdot,z_{0})-K_{n}(\cdot,z_{0})\|_{L^{2}(G)},
\end{equation}
\begin{equation}\label{eq:38}
E_{n,\infty}(f_0,G):=\|f_{0}-\pi_{n}\|_{L^{\infty}(\overline{G})},
\end{equation}
\begin{equation}\label{eq:39}
 \widetilde{E}_{n,2}(K,G):=\|K(\cdot,z_{0})-\widetilde{K}_{n}(\cdot,z_{0})\|_{L^{2}(G)},
 \end{equation}
\begin{equation}\label{eq:40}
 \widetilde{E}_{n,\infty}(f_{0},G):=\|f_{0}-\widetilde{\pi}_{n}\|_{L^{\infty}(\overline{G})}.
\end{equation}

We do this by considering two different geometries: (a) lens-shaped domains; and (b) circular sectors. In both cases the normalized conformal map $f_0$, and hence the kernel function $K(\cdot,z_0)$, are known explicitly in terms of elementary functions. In addition, we present results illustrating the decay of the two sequences of points $\{P_n(z_0)\}_{n=1}^\infty$ and $\widetilde{P}_n(z_0)_{n=1}^\infty$ of the Bergman polynomials.

\subsection{Computational details}\label{sec:5.1}
Let $\{\eta_j\}$ denote the set of linearly independent functions defined in (\ref{eq:7})--(\ref{eq:9}).
For the application of the BKM/AB (or BKM), we compute the associated orthonormal set $\{\widetilde{P}_j\}$ by using the Arnoldi variant of the Gram-Schmidt (GS) process studied in \cite{NS-AGS}, rather than the conventional GS, which is based on the orthonormalization of the monomials $\{z^j\}$, as it is suggested in \cite{LPS} and \cite{PW86}.
In the Arnoldi GS we construct first the polynomial part of the set $\{\widetilde{P}_j\}$ by orthonormalizing consequently the functions $1,z\widetilde{P}_0,z\widetilde{P}_1,\ldots,z\widetilde{P}_{n-1}$. Then, we orthonormalize the singular basis functions (\ref{eq:7}) and (\ref{eq:8}). As it is shown in \cite{NS-AGS},
in this way we avoid the instability difficulties associated with the application of the conventional GS method.
For a comprehensive report of experiments testifying the instability of the conventional GS in BKM and BKM/AB we
refer to \cite[\S5]{PW86}.

The GS process, requires the computation of inner products of the form
\begin{equation}\label{eq:inner}
\langle \eta_k,\eta_l\rangle=\int_{G}\eta_k(z)\,\overline{\eta_l(z)}\,dA(z).
\end{equation}
For our purposes here, we compute these inner products by using Green's formula in order to transform the area integral into a line integral. For instance, when $\eta_k=z^{k}$, $\eta_l=z^{l}$, we have
\begin{equation}
\langle z^{k},z^{l}\rangle=\frac{1}{2(l+1)i}\int_{\Gamma}z^{k}\,\overline{z}^{l+1}\,dz.
\end{equation}
In all cases considered below this leads to explicit formulas for the inner products (\ref{eq:inner}).

Regarding the computation of the errors (\ref{eq:37})--(\ref{eq:40}) we note the following:
\begin{itemize}
\item[(i)]
 The two errors $\|K(\cdot,z_0)-K_{n}(\cdot,z_0)\|_{L^{2}(G)}$
 and $\|K(\cdot,z_0)-\widetilde{K}_{n}(\cdot,z_0)\|_{L^{2}(G)}$
 are computed by using Parseval's
identity, i.e.,
\begin{equation}
\|K(\cdot,z_0)-K_{n}(\cdot,z_0)\|_{L^{2}(G)}^{2}=K(z_0,z_0)-K_{n}(z_0,z_0),
\end{equation}
and
\begin{equation}
\|K(\cdot,z_0)-\widetilde{K}_{n}(\cdot,z_0)\|_{L^{2}(G)}^{2}=K(z_0,z_0)-\widetilde{K}_{n}(z_0,z_0).
\end{equation}
\item[(ii)]
Estimates for the two errors $ \|f_{0}-\pi_{n}\|_{L^{\infty}(\overline{G})}$
and
$\|f_{0}-\widetilde{\pi}_{n}\|_{L^{\infty}(\overline{G})}$
are obtained by using the exact formula for $f_0$ and then sampling the differences $f_0-\pi_{n}$ and $f_0-\widetilde{\pi}_{n}$ on $100$ uniformly distributed points on each analytic arc forming the boundary $\Gamma$.
\end{itemize}

All results were obtained with Maple $11$, using the systems facility for $64$-digit floating point arithmetic, on a pentium PC.

\subsection{BKM and BKM/AB approximation}\label{sec:5.2}
\subsubsection{Lens-shaped domains}
Let $G_{a,b}$ denote the lens-shaped domain, whose
boundary $\Gamma$ consists of two circular arcs $\Gamma_a$ and
$\Gamma_b$ that join together the points $i$ and $-i$ ($\Gamma_a$ being to the
left of $\Gamma_b$) and form angles $a$ and $b$, respectively,
with the linear segment $[-i,i]$.
(Thus, with the notation of Section~\ref{subsec:corners} we have $\alpha_1=\alpha_2=\alpha$,
where $\alpha:=(a+b)/\pi$.) Let $f_{0}$ denote the normalized conformal map from $G_{a,b}$ onto $D(0,r_{0})$, with $f_0(0)=0$ and $f^{\prime}_{0}(0)=1$.
By working as in \cite[\S4]{M-DSS}, it is easy to check that, if $a+b=k\pi/m$, where $k, m\in\mathbb{N}$, then $f_{0}$ is given by
\begin{equation}\label{eq:41}
f_{0}(z)=r_0\frac{\left[\frac{z-i}{z+i}\right]^{\frac{m}{k}}-(-1)^{\frac{m}{k}}}
{\left[\frac{z-i}{z+i}\right]^{\frac{m}{k}}-(-1)^{\frac{m}{k}}e^{-2ia\frac{m}{k}}},
\quad z\in\overline{G}_{a,b},
\end{equation}
where $r_0=(k/m)\sin(ma/k)$. Also,
\begin{equation}\label{eq:42}
K(z,0)=-\frac{4m^{2}}{\pi k^2}\frac{\left[(z-i)(z+i)\right]^{\frac{m}{k}-1}}
{\left[e^{ia\frac{m}{k}}(-i)^{\frac{m}{k}}(z-i)^{\frac{m}{k}}-e^{-ia\frac{m}{k}}
(i)^{\frac{m}{k}}(z+i)^{\frac{m}{k}}\right]^2},
\end{equation}
and thus
$$
K(0,0):=\frac{m^2}{\pi k^2}\frac{1}{\sin^{2}(ma/k)}.
$$

It is also easy to verify that the formulas (71)--(73) of \cite{M-DSS} work as well for the exterior conformal map
$\Phi:\overline{\mathbb{C}}\backslash \overline{G}_{a,b}\rightarrow\Delta$ consider here.
That is, $w=\Phi(z)$ is given by the composition of the following three transformations:
 \begin{equation}\label{eq:422}
 \xi(z):=e^{i((m-k)\pi/m+a)}\frac{z-i}{z+i},
 \end{equation}
  \begin{equation}\label{eq:423}
  t(\xi):=\xi^{m/(2m-k)},\quad \arg\xi\in(-k\pi/m,(2m-k)\pi/m],
   \end{equation}
 \begin{equation}\label{eq:424}
  w(t):=\frac{1-\lambda_{a}t}{t-\lambda_{a}}, \quad
  \lambda_{a}:=e^{i((m-k)\pi+ma)/(2m-k)}.
  \end{equation}

We consider separately the following three cases:
\begin{itemize}
\item[(i)]
$\alpha=1/2$, with $a=\pi/6$ and $b=\pi/3$;
\item[(ii)]
$\alpha=1/2$, with $a=\pi/4$ and $b=\pi/4$;
\item[(iii)]
$\alpha=2/13$, with $a=\pi/13$ and $b=\pi/13$.
\end{itemize}

\noindent
\textit{Cases (i) and (ii)}:
In the first two cases the conformal map $f_0$ is a rational function, and hence it has an analytic
continuation across $\Gamma$ into $\Omega$.
When $a=\pi/6$, then the two nearest singularities of $f_{0}$ in $\Omega$ are the two simple poles at $z_{1}=-\sqrt{3}/3$ and $z_{2}=\sqrt{3}$, where $|\Phi(z_{1})|\approx1.347$ and $|\Phi(z_{2})|\approx2.532$.
Accordingly, in our experiments, we use the singular function $[1/(z-z_1)]^{\prime}$.
This cancels out the nearest singularity at $z_1$.  In the symmetric case, where $a=b=\pi/4$, we have
$$
f_0(z)=\frac{-2iz}{z^2-1},
$$
and the only singularities of $f_0$ are the two simple poles at $z_{1}=-1$ and $z_{2}=1$,
where $|\Phi(z_{1})|=|\Phi(z_{2})|=\sqrt{3}$.
In this case, we use the singular function $[z/(z^2-z_1^2)]^{\prime}$, which takes care of both poles at $z_1$
and $z_2$.
It follows from Remark \ref{rem:PapWa} that this cancels out all the singularities of $f_0$.

We recall from Theorems \ref{th:1} and \ref{th2ii} (and their proof) the four estimates,
\begin{equation}\label{e:1}
E_{n,2}(K,G)\preceq\frac{n}{|\Phi(z_{1})|^{n}},
\end{equation}
\begin{equation}\label{e:2}
\frac{1}{|\Phi(z_{1})|^{n}}\preceq E_{n,\infty}(f_0,G)\preceq\frac{n\sqrt{\log n}}{|\Phi(z_{1})|^{n}},
\end{equation}
and
\begin{equation}\label{e:3}
 \widetilde{E}_{n,2}(K,G)\preceq\frac{n}{|\Phi(z_{2})|^{n}},
 \end{equation}
\begin{equation}\label{e:4}
\frac{1}{|\Phi(z_{2})|^{n}}\preceq \widetilde{E}_{n,\infty}(f_0,G)\preceq\frac{n\sqrt{\log n}}{|\Phi(z_{2})|^{n}}.
\end{equation}
Below, we present numerical results that illustrate the laws of the above errors and rates. In presenting the numerical results we use the following notation:

\begin{itemize}
\item
$\varrho:$ This denotes the order of approximation (the base of $n$) in the errors $(\ref{e:1})$--$(\ref{e:4})$.
\item
$\varrho_n:$ This denotes the estimate of $\varrho$, corresponding to $n$, and is determined as follows: With $E_{n}$  denoting any of the two errors $E_{n,2}(K,G)$ or $\widetilde{E}_{n,2}(K,G)$, we assume that
\begin{equation}\label{eq:En-rho}
E_{n}\approx c\frac{n}{\varrho^{n}}
\end{equation}
and seek to estimate $\varrho$ by means of the formula,
\begin{equation}\label{e:5}
\varrho_{n}=\left(\frac{n}{n-m}\frac{E_{n-m}}{E_{n}}\right)^{\frac{1}{m}}.
\end{equation}
(Here we take $m=4$, or $m=5$.)
If $E_{n}$  denotes either of the two errors $E_{n,\infty}(f_0,G)$ or $\widetilde{E}_{n,\infty}(f_{0},G)$, then
we assume that
\begin{equation}
E_{n}\approx c\frac{n\sqrt{\log n}}{\varrho^{n}},
\end{equation}
and seek to estimate $\varrho$ by means of the formula,
\begin{equation}\label{e:6}
\varrho_{n}=\left(\frac{n}{n-m}\frac{\sqrt{\log n}}{\sqrt{\log(n-m)}}\frac{E_{n-m}}{E_{n}}\right)^{\frac{1}{m}},
\end{equation}
with $m=4$, or $m=5$.
\item
$\varrho_n^{\star}:$ With $E_{n}$  denoting either of the errors $E_{n,\infty}(f_0,G)$ or $\widetilde{E}_{n,\infty}(f_{0},G)$,
we also test the law
\begin{equation}\label{e:8}
E_{n}\approx c\frac{1}{\varrho^{n}},
\end{equation}
thereby estimating $\varrho$ by means of
\begin{equation}\label{e:7}
\varrho_{n}^{\star}=\left(\frac{E_{n-m}}{E_{n}}\right)^{\frac{1}{m}}.
\end{equation}
\end{itemize}

The presented results show clearly the advantage of the BKM/AB over the BKM. In addition, they
indicate a close agreement between the theoretical and the computed order of approximation.
In Tables~$\ref{tab:5.1}$ and $\ref{tab:5.3}$, the results associated with the errors $E_{n,2}(K,G)$ and $\widetilde{E}_{n,2}(K,G)$ indicate the convergence of $\varrho_{n}$ to $\varrho$. Regarding the errors $E_{n,\infty}(f_0,G)$ and $\widetilde{E}_{n,\infty}(f_{0},G)$, the results of the Tables~$\ref{tab:5.2}$ and $\ref{tab:5.4} $ show that $\varrho_{n}^{\star}$ converges faster to $\varrho$ than $\varrho_{n}$.
This suggest, at least for the geometry under consideration, a behavior of the type (\ref{e:8}) for the errors $E_{n,\infty}(f_0,G)$ and $\widetilde{E}_{n,\infty}(f_{0},G)$.
As it is predicted by Remark \ref{rem:PapWa}, in Case (ii) the two errors $\widetilde{E}_{n,2}(K,G)$ and $\widetilde{E}_{n,\infty}(f_{0},G)$ vanish. This was testified in our experiments, in the sense that the computed errors $\widetilde{E}_{n,2}(K,G)$ and $\widetilde{E}_{n,\infty}(f_0,G)$ were zero within machine precision, thus they are not quoted in Tables $\ref{tab:5.3}$ and $\ref{tab:5.4}$.

\begin{table}[t]
\begin{center}
\begin{tabular}{|r|c|c|c|c|c|c|c|c|}
\hline
${\ } $
& \multicolumn{2}{|c}{\textrm{BKM:}$\,\, \varrho\approx 1.347$}
& \multicolumn{2}{c|}{\textrm{BKM/AB:}$\,\, \varrho\approx 2.532$}
${\vphantom{{\sum^{\sum^N}}}}$ \\*[3pt]
\cline{2-5}
    $n$ & $E_{n,2}(K,G)$ & $\varrho_{n}$ & $\widetilde{E}_{n,2}(K,G)$ & $\varrho_{n}$
    ${\vphantom{\sum^{\sum^{\sum^N}}}}$ \\*[3pt]    \hline
    5  & 4.4e-01 &    -   &2.7e-02&   -  \\
    10 & 1.3e-01 & 1.47 &3.6e-04&2.72\\
    15 & 3.5e-02 & 1.41 &4.1e-06&2.65\\
    20 & 8.9e-03 & 1.39 &4.6e-08&2.60\\
    25 & 2.2e-03 & 1.38 &4.9e-10&2.59\\
    30 & 5.4e-04 & 1.37 &5.2e-12&2.57\\
    35 & 1.3e-04 & 1.36 &5.4e-14&2.57\\ \hline
\end{tabular}
\end{center}

\medskip
\caption{BKM approximations to $K$: Lens-shaped, Case~(i).}
\label{tab:5.1}
\end{table}

\begin{table}[t]
\begin{center}
\begin{tabular}{|c|c|c|c|c|c|c|c|c|}
\hline
${\ } $
&\multicolumn{3}{|c}{{BKM:}$\,\, \varrho\approx 1.347$}
&\multicolumn{3}{c|}{{BKM/AB:}$\,\, \varrho\approx 2.532$}
${\vphantom{{\sum^{\sum^N}}}}$ \\*[3pt]
\cline{2-7}
$n$ & $E_{n,\infty}(f_{0},G)$ & $\varrho_{n}^{\star}$ & $\varrho_{n}$&
$\widetilde{E}_{n,\infty}(f_{0},G)$
 & $\varrho_{n}^{\star}$&$\varrho_{n}$
 ${\vphantom{\sum^{\sum^{\sum^N}}}}$ \\*[3pt]  \hline
 5  &2.5e-01&   - &  - &1.4e-02&   - &  - \\
 10 &6.8e-02&1.299&1.54&1.3e-04&2.541&3.03\\
 15 &1.6e-02&1.331&1.47&1.3e-06&2.528&2.79\\
 20 &3.8e-03&1.342&1.43&1.2e-08&2.537&2.71\\
 25 &8.5e-04&1.346&1.42&1.1e-10&2.532&2.67\\
 30 &1.9e-04&1.347&1.41&1.1e-12&2.532&2.64\\
 35 &4.3e-05&1.347&1.40&1.1e-14&2.532&2.62\\ \hline
\end{tabular}
\end{center}

\medskip
\caption{BKM approximations to $f_{0}$: Lens-shaped, Case~(i).}
\label{tab:5.2}
\end{table}

\begin{table}[t]
\begin{center}
\begin{tabular}{|c|c|c|c|c|c|c|c|c|}
\hline
${\ } $
&\multicolumn{1}{|c}{\textrm{BKM:}$\,\, \varrho\approx 1.732$}&
${\vphantom{{\sum^{\sum^N}}}}$ \\*[3pt]
\cline{1-3}
$n$ & $E_{n,2}(K,G)$ & $\varrho_{n}$
${\vphantom{\sum^{\sum^{\sum^N}}}}$ \\*[3pt] \hline
   4  & 2.7e-01 &    - \\
   8  & 4.0e-02 & 1.92 \\
   12 & 5.3e-03 & 1.83 \\
   16 & 6.7e-04 & 1.80 \\
   20 & 8.3e-05 & 1.78 \\
   24 & 1.0e-05 & 1.78 \\
   28 & 1.2e-06 & 1.77 \\
   32 & 1.4e-07 & 1.76 \\
   36 & 1.7e-08 & 1.75 \\ \hline
\end{tabular}
\end{center}
\medskip\caption{BKM approximations to $K$: Lens-shaped, Case~(ii).}
\label{tab:5.3}
\end{table}

\begin{table}[t]
\begin{center}
\begin{tabular}{|c|c|c|c|c|c|c|c|c|}
\hline
${\ } $
&\multicolumn{2}{|c}{\textrm{BKM:}$\,\, \varrho\approx1.732$}&
${\vphantom{{\sum^{\sum^N}}}}$ \\*[3pt]
\cline{2-4}
$n$ & $E_{n,\infty}(f_{0},G)$ & $\varrho_{n}^{\star}$ & $\varrho_{n}$
${\vphantom{\sum^{\sum^{\sum^N}}}}$ \\*[3pt] \hline
4  &1.3e-01&   - &  - \\
8  &1.6e-02&1.685&2.11\\
12 &1.8e-03&1.718&1.95\\
16 &2.0e-04&1.729&1.89\\
20 &2.3e-05&1.732&1.83\\
24 &2.5e-06&1.732&1.83\\
28 &2.8e-07&1.732&1.81\\
32 &3.1e-08&1.732&1.80\\
36 &3.4e-09&1.732&1.79\\ \hline
\end{tabular}
\end{center}

\medskip
\caption{BKM approximations to $f_{0}$: Lens-shaped, Case~(ii).}
\label{tab:5.4}
\end{table}

\medskip
\noindent
\textit{Case (iii)}:
In this case the conformal map $f_0$ has a branch point singularity at each of the two corners $\tau_1=i$ and $\tau_2=-i$, and therefore Lehman's expansions (\ref{eq:lemman2}) are valid with
$\gamma_1^{(1)}=\gamma_1^{(2)}=13/2$ and $\gamma_2^{(1)}=\gamma_2^{(2)}=1+1/\alpha=15/2$.
This gives $(2-\alpha)/\alpha=12$ and $(2-\alpha)(1+1/\alpha)=180/13=13.84\cdots$.
Furthermore, it follows from (\ref{eq:41}) that the nearest singularities of $f_0$ in $\Omega$, are the two simple poles at $z_{1}=\tan(\pi/13)$ and $z_{2}=-\tan(\pi/13)$, where
$|\Phi(z_{1})|=|\Phi(z_{2})|\approx1.119$, and the next singularity occurs at a point $z_{3}$, where $|\Phi(z_{3})|\approx2.055$.

Therefore, from Theorem~\ref{th3} we have that,
\begin{equation}\label{e:11}
E_{n,2}(K,G) \leqslant c_{1}\frac{1}{n^{12}}+c_{2}\frac{2}{R^{n}},
\end{equation}
 and
\begin{equation}\label{e:12}
 E_{n,\infty}(f_0,G)\leqslant c_{3}\sqrt{\log n}\frac{1}{n^{12}}+c_{4}\frac{1}{R^{n}},
\end{equation}
where $1<R<|\Phi(z_{1})|$.
In order to decide which singular functions to include in the BKM/AB the following estimates,
valid for $n=32$, are relevant; see also Theorem~\ref{th:c}:
\begin{equation*}
\begin{alignedat}{2}
\frac{1}{n^{(2-\alpha)/\alpha}}&\approx 8.7\times 10^{-19}, \quad
&\frac{1}{|\Phi(z_{1})|^{n}}\approx 2.7 \times 10^{-2}, \\
\frac{1}{n^{(2-\alpha)(1+1/\alpha)}}&\approx 1.4\times 10^{-21},
&\frac{1}{|\Phi(z_{2})|^{n}} \approx 1.0 \times 10^{-10}.
\end{alignedat}
\end{equation*}
The estimates in the first line indicate that for $n=32$ (even for bigger values of $n$), the dominant term in the errors $(\ref{e:11})$ and $(\ref{e:12})$ is $c_2\frac{1}{R^{n}}$.
As it is suggested by the estimate in the second line,  we use in our BKM/AB approximations only the singular function $[z/(z^2-z_1^2)]^{\prime}$, which takes care of the two symmetric poles at $z_1$ and $z_2$, and we
include no basis functions reflecting the corner singularities of $f_0$ on $\Gamma$.
Then, from Theorem~\ref{th:c} we have for the resulting approximations that
\begin{equation}\label{e:13}
\widetilde{E}_{n,2}(K,G)\leq c_{1}\frac{1}{n^{12}}+ c_{2}\frac{1}{R^{n}},
\end{equation}
\begin{equation}\label{e:14}
\widetilde{E}_{n,\infty}(f_0,G)\leqslant
c_{3}\sqrt{\log n}\frac{1}{n^{12}}+c_{4}\frac{1}{R^{n}},
\end{equation}
where $1<R<|\Phi(z_{2})|$.

Below, we present numerical results that illustrate the rates in (\ref{e:11})--(\ref{e:14}). In presenting the numerical results we use the following notation:
\begin{itemize}
\item
$\varrho:$ This denotes the order of approximation (the base of $n$) in the errors $(\ref{e:11})$--$(\ref{e:14})$.
\item
$\varrho_n:$ This denotes the estimate of $\varrho$, corresponding to $n$, and is determined as follows: With $E_{n}$  denoting any of the four errors $E_{n,2}(K,G)$, $\widetilde{E}_{n,2}(K,G)$, $E_{n,\infty}(f_0,G)$ or $\widetilde{E}_{n,\infty}(f_{0},G)$ we assume that
\begin{equation}\label{e:9}
E_{n}\approx c\frac{1}{\varrho^{n}},
\end{equation}
and seek to estimate $\varrho$ by means of the formula,
\begin{equation}\label{e:10}
\varrho_{n}=\left(\frac{E_{n-4}}{E_{n}}\right)^{\frac{1}{4}}.
\end{equation}
\end{itemize}

The results quoted in Tables~\ref{tab:5.7} and \ref{tab:5.8},
show the remarkable approximation achieved by the BKM/AB by using as little as 32 monomials. Moreover, they highlight the significance of Theorem~\ref{th:c}, as it is compared to the estimate (\ref{eq:6}),
in the sense that they confirm fully the theoretical prediction that the two poles at $z_1$ and $z_2$ are the most serious singularities of $f_0$  for small values of $n$; see also Remark~\ref{rem:bmab}.

\begin{table}[t]
\begin{center}
\begin{tabular}{|c|c|c|c|c|c|c|c|c|}
\hline
   ${\ } $
& \multicolumn{2}{|c}{\textrm{BKM:}$\,\, \varrho\approx1.119$}
& \multicolumn{2}{c|}{\textrm{BKM/AB:}$\,\, \varrho\approx2.055$}
${\vphantom{{\sum^{\sum^N}}}}$ \\*[3pt]
\cline{2-5}
$n$ & $E_{n,2}(K,G)$ & $\varrho_{n}$ &
$\widetilde{E}_{n,2}(K,G)$ &
$\varrho_{n}$
${\vphantom{\sum^{\sum^{\sum^N}}}}$ \\*[3pt] \hline
4  & 2.8819 &    -   &7.3e-02&  -  \\
8  & 2.3812 & 1.049 &5.6e-03&1.898\\
12 & 1.3864 & 1.145 &3.9e-04&1.934\\
16 & 0.9188 & 1.108 &2.6e-05&1.965\\
20 & 0.5961 & 1.114 &1.7e-06&1.974\\
24 & 0.3812 & 1.118 &1.1e-07&1.982\\
28 & 0.2413 & 1.121 &7.2e-09&1.981\\
32 & 0.1538 & 1.119 &4.6e-10&1.992\\ \hline
\end{tabular}
\end{center}

\medskip
\caption{BKM approximations to $K$: Lens-shaped, Case~(iii).}
\label{tab:5.7}
\end{table}

\begin{table}[t]
\begin{center}
\begin{tabular}{|c|c|c|c|c|c|c|c|c|}
\hline
${\ } $
& \multicolumn{2}{|c}{\textrm{BKM:}$\,\, \varrho\approx1.119$}
& \multicolumn{2}{c|}{\textrm{BKM/AB:}$\,\, \varrho\approx2.055$}
${\vphantom{{\sum^{\sum^N}}}}$ \\*[3pt]
\cline{2-5}
$n$ & $E_{n,\infty}(f_{0},G)$ & $\varrho_{n}$ &
$\widetilde{E}_{n,\infty}(f_{0},G)$ &
$\varrho_{n}$
${\vphantom{\sum^{\sum^{\sum^N}}}}$ \\*[3pt] \hline
4  & 0.8820 &    -   &1.2e-02&   -  \\
8  & 0.3817 & 1.232 &6.1e-04&2.101\\
12 & 0.2044 & 1.170 &3.4e-05&2.053\\
16 & 0.1180 & 1.147 &2.0e-06&2.029\\
20 & 0.0702 & 1.139 &1.2e-07&2.028\\
24 & 0.0424 & 1.135 &7.0e-09&2.028\\
28 & 0.0259 & 1.131 &4.1e-10&2.028\\
32 & 0.0160 & 1.128 &2.5e-11&2.028\\ \hline
\end{tabular}
\end{center}

\medskip
\caption{BKM approximations to $f_{0}$: Lens-shaped, Case~(iii).}
\label{tab:5.8}
\end{table}

\subsubsection{Circular sector}\label{subsec:5.2}
Let $G_\alpha$ denote the symmetric circular sector of radius $2$ and opening angle $\alpha\pi,\,\,0<\alpha<2$, at the origin, i.e.,
$$
G_\alpha:=\{z: |z|<2,\,\,-\alpha\pi/2<\arg z<\alpha\pi/2\}.
$$
Let $f_{0}$ denote the normalized conformal map from $G_\alpha$ onto $D(0,r_{0})$, with
$f_{0}(1)=0$ and $f^{\prime}_{0}(1)=1$.
For each value of the parameter $\alpha$ the conformal map $f_{0}(z)$ can be computed by means of the transformations (see \cite[p.~532]{MSS}):
\begin{equation}
 f_0(z)=\bigg[\frac{2\alpha(4^{1/\alpha}-1)}{4^{1/\alpha}+1}\bigg]\frac{t-d}{td-1},
\end{equation}
where
\begin{equation}
t=\bigg(\frac{iz^{1/\alpha}+2^{1/\alpha}}{iz^{1/\alpha}-2^{1/\alpha}}\bigg)^{2}\quad \textrm{and}\quad  d=\bigg(\frac{i+2^{1/\alpha}}{i-2^{1/\alpha}}\bigg)^2.
\end{equation}
This gives
\begin{equation}
r_0=\frac{2\alpha(4^{1/\alpha}-1)}{4^{1/\alpha}+1}\quad \textrm{and}\quad
K(1,1)=\frac{1}{\pi}\bigg(\frac{4^{1/\alpha}+1}{2\alpha(4^{1/\alpha}-1)}\bigg)^2.
\end{equation}

The normalized exterior map $\Phi:\mathbb{\overline{C}}\backslash \overline{G}_\alpha\rightarrow\Delta$
is given, as can be easily verified, by the composition of the
following three transformations:
\begin{equation}
\xi(z):=\frac{i(2^{1-1/\alpha}z^{1/\alpha}-2i)}{2^{1-1/\alpha}z^{1/\alpha}+2i},\quad\arg z\in(-\pi,\pi],
\end{equation}
\begin{equation}
t(\xi):=\xi^{2/3}, \quad\arg\xi\in(-\pi/2,3\pi/2],
\end{equation}
\begin{equation}
w(t):=\frac{1- e^{i\pi/3}t}{t-e^{i\pi/3}}.
\end{equation}

We consider separately the following two cases:
\begin{itemize}
\item[(i)]
$\alpha=1$\,\,\, (half-disk);
\item[(ii)]
$\alpha=3/2$\,\,\, (three-quarter disk).
\end{itemize}

\noindent
\textit{Case (i)}:
When $\alpha=1$, then the domain $G_\alpha$ is the half-disk
$$
G_1=\{z:|z|<2,\mathfrak{R}z>0\}.
$$
In this case the conformal map $f_0$ has an analytic continuation across $\Gamma$ into $\Omega$.
The nearest singularities of $f_{0}$ in $\Omega$, are the two simple poles at $z_{1}=-1$
and $z_{2}=4$, where $|\Phi(z_{1})|\approx 1.452$ and $|\Phi(z_{2})|\approx 2.212$.
Accordingly, in our experiments we use the singular function $[1/(z-z_1)]^{\prime}$, which cancels out the nearest pole at $z_1$. This case is similar to the lens-shaped domain with $\alpha=1/2$. Hence, the errors $E_{n,2}(K,G)$, $E_{n,\infty}(f_0,G)$, $\widetilde{E}_{n,2}(K,G)$ and $\widetilde{E}_{n,\infty}(f_0,G)$ satisfy respectively (\ref{e:1}), (\ref{e:2}), (\ref{e:3}) and (\ref{e:4}).
Our purpose here, is to illustrate that the error bounds in (\ref{e:1})--(\ref{e:4}) reflect the actual errors.
We do so by computing estimates to $\varrho_n$ and $\varrho_n^\star$ of $\varrho$ by using (\ref{eq:En-rho})--(\ref{e:7}).

In Table~\ref{tab:5.5}, the results associated with the errors $E_{n,2}(K,G)$ and $\widetilde{E}_{n,2}(K,G)$ indicate clearly the convergence of $\varrho_{n}$ to $\varrho$. Regarding the errors $E_{n,\infty}(f_0,G)$ and $\widetilde{E}_{n,\infty}(f_{0},G)$, the results of Table~\ref{tab:5.6} show that $\varrho_{n}^{\star}$
converges faster to $\varrho$ than $\varrho_{n}$. This suggest a behavior of the type (\ref{e:8}) for $E_{n,\infty}(f_0,G)$ and $\widetilde{E}_{n,\infty}(f_{0},G)$. In both tables the numbers confirm the remarkable
advantage of the BKM/AB over the BKM.

\begin{table}[t]
\begin{center}
\begin{tabular}{|c|c|c|c|c|c|c|c|c|}
\hline
${\ } $
& \multicolumn{2}{|c}{\textrm{BKM:}$\,\, \varrho\approx1.452$}
& \multicolumn{2}{c|}{\textrm{BKM/AB:}$\,\, \varrho\approx2.212$}
${\vphantom{{\sum^{\sum^N}}}}$ \\*[3pt]
\cline{2-5}
$n$ & $E_{n,2}(K,G)$ & $\varrho_{n}$ &
$\widetilde{E}_{n,2}(K,G)$ &
$\varrho_{n}$
${\vphantom{\sum^{\sum^{\sum^N}}}}$ \\*[3pt]  \hline
5  & 7.1e-02 &    -   &2.8e-02&   -  \\
10 & 1.6e-02 & 1.55 &7.2e-04&2.39\\
15 & 2.8e-03 & 1.54 &1.7e-05&2.29\\
20 & 5.1e-04 & 1.49 &3.6e-07&2.29\\
25 & 8.7e-05 & 1.49 &7.6e-09&2.26\\
30 & 1.5e-05 & 1.48 &1.6e-10&2.24\\
35 & 2.4e-06 & 1.47 &3.2e-12&2.24\\
40 & 4.0e-07 & 1.47 &6.4e-14&2.24\\
45 & 6.6e-08 & 1.47 &1.3e-15&2.23\\
50 & 1.1e-08 & 1.46 &2.6e-17&2.23\\ \hline
\end{tabular}
\end{center}

\medskip
\caption{BKM approximations to $K$: Half-disk.}
\label{tab:5.5}
\end{table}

\begin{table}[t]
\begin{center}
\begin{tabular}{|c|c|c|c|c|c|c|c|c|}
\hline
${\ } $
& \multicolumn{3}{|c}{\textrm{BKM:}$\,\, \varrho\approx1.452$}
& \multicolumn{3}{c|}{\textrm{BKM/AB:}$\,\, \varrho\approx2.212$}
${\vphantom{{\sum^{\sum^N}}}}$ \\*[3pt]
\cline{2-7}
   $n$ &$E_{n,\infty}(f_{0},G)$ & $\varrho_{n}^{\star}$
   &$\varrho_{n}$&$\widetilde{E}_{n,\infty}(f_{0},G)$
   & $\varrho_{n}^{\star}$&$\varrho_{n}$
   ${\vphantom{\sum^{\sum^{\sum^N}}}}$ \\*[3pt]  \hline
   5  &1.1e-01&   -  &   -  &4.0e-02&   -  &   -  \\
   10 &2.2e-02&1.401&1.64&7.7e-04&2.20&2.62\\
   15 &3.4e-03&1.446&1.60&1.5e-05&2.20&2.42\\
   20 &5.3e-04&1.450&1.55&2.8e-07&2.22&2.37\\
   25 &8.3e-05&1.452&1.53&5.2e-09&2.22&2.34\\
   30 &1.3e-05&1.452&1.51&9.8e-11&2.21&2.31\\
   35 &2.0e-06&1.452&1.51&1.8e-12&2.22&2.30\\
   40 &3.1e-07&1.452&1.50&3.5e-14&2.20&2.27\\
   45 &4.8e-08&1.452&1.49&6.6e-16&2.21&2.27\\
   50 &7.4e-09&1.452&1.49&1.2e-17&2.22&2.27\\ \hline
\end{tabular}
\end{center}

\medskip
\caption{BKM approximations to $f_{0}$:  Half-disk.}
\label{tab:5.6}
\end{table}

\medskip
\noindent
\textit{Case (ii)}: In this case $f_0$ has a branch point singularity at the point $\tau_1=0$ with
$$
f_0(z)=f_0(0)+\sum_{j=1}^{\infty}a_jz^{j/\alpha},\,\,\,a_1\neq0,
$$
valid for $z$ close to $0$. The nearest singularity of $f_{0}$ in $\Omega$ is a simple pole at $z_{1}=4$, where $|\Phi(z_{1})|\approx 2.04$. For the application of BKM, Theorem~\ref{th3} gives that
\begin{equation}\label{eq:47}
E_{n,2}(K,G) \leqslant c_{1}\frac{1}{n^{1/3}}+c_{2}\frac{1}{R^{n}},
\end{equation}
and
\begin{equation}\label{eq:48}
E_{n,\infty}(f_0,G)\leqslant c_{3}\sqrt{\log n}\frac{1}{n^{1/3}}+c_{4}\frac{1}{R^{n}},
\end{equation}
where $1<R<|\Phi(z_{1})|$.

Since $1/|\Phi(z_1)|^{50}\approx 3.3\times 10^{-16}$, and in view of Theorem~\ref{th:c}, we include in our basis only singular functions that reflect the branch point singularity of $f_0$ at $\tau_1$.
More precisely, in order to keep the contribution of both sources of error balanced, we choose to use the first $15$ singular function of the form $z^{j/\alpha-1}$, where $j/\alpha\notin\mathbb{N}$. This gives
$s^\star=23/3$ in Theorem~\ref{th:c}, and hence the following estimates for the errors in the resulting BKM/AB approximations,
\begin{equation}\label{eq:49}
\widetilde{E}_{n,2}(K,G)\leq c_{1}\frac{1}{n^{23/3}}+ c_{2}\frac{1}{R^{n}},
\end{equation}
and
\begin{equation}\label{eq:50}
\widetilde{E}_{n,\infty}(f_0,G)\leqslant
c_{3}\sqrt{\log n}\frac{1}{n^{23/3}}+c_{4}\frac{1}{R^{n}},
\end{equation}
where $ 1<R<|\Phi(z_{1})|$.

Below, we present numerical results that illustrate the rates in (\ref{eq:49})--(\ref{eq:50}),
where we use the following notation:
\begin{itemize}
\item
$\sigma:$ This denotes the exponent of $1/n$ in the errors $(\ref{eq:49})$--$(\ref{eq:50})$.
\item
$\sigma_n:$ This denotes the estimate of $\sigma$ corresponding to $n$, and is determined as follows: With $E_{n}$  denoting any of the two errors $E_{n,2}(K,G)$, $\widetilde{E}_{n,2}(K,G)$, we assume that
\begin{equation}
E_{n}\approx c\frac{1}{n^{\sigma}}
 \end{equation}
 and seek to estimate $\sigma$ by means of the formula
\begin{equation}
\sigma_{n}=\log\bigg(\frac{E_{n-5}}{E_{n}}\bigg)/\log\bigg(\frac{n}{n-5}\bigg).
\end{equation}
If $E_{n}$  denotes either of the two errors $E_{n,\infty}(f_0,G)$ or $\widetilde{E}_{n,\infty}(f_{0},G)$,
then we assume that
\begin{equation}
E_{n}\approx c\sqrt{\log n}\frac{1}{n^{\sigma}},
\end{equation}
and seek to estimate $\sigma$ by means of the formula
\begin{equation}
\sigma_{n}=\frac{\log\big(\frac{E_{n-5}}{E_{n}}\big)-\frac{1}{2}\log\big[\frac{\log (n-5)}{\log n}\big]}{\log\big(\frac{n}{n-5}\big)}.
\end{equation}
\end{itemize}
In addition, we check a behavior of the form (\ref{e:9}) for the errors $\widetilde{E}_{n,2}(K,G)$ and $\widetilde{E}_{n,\infty}(f_0,G)$, by computing $\varrho_n$ as in (\ref{e:10}), with $5$ in the place of $4$.

Our purposes here is to show that the change of the dominant term in both (\ref{eq:49}) and (\ref{eq:50})
can actually be detected in the computed errors. This is indeed the case in the results quoted in
Table~\ref{tab:5.15}. More precisely, the results associated with the errors  $\widetilde{E}_{n,2}(K,G)$ and $\widetilde{E}_{n,\infty}(f_{0},G)$ indicate the convergence of $\varrho_{n}$ to $\varrho$ for values of $n$ up to $50$ and the convergence of $\sigma_{n}$ to $\sigma$ for values larger than $50$.
Furthermore, the results show  that the two constants $c_1$ and $c_2$ in (\ref{eq:49}) and  $c_3$ and $c_4$ in (\ref{eq:50}) are, respectively,
of the same magnitude.

\begin{table}[t]
 \begin{center}
 \begin{tabular}{|c|c|c|c|c|c|c|c|c|c|c|c|c|c|}
 \hline
 ${\ } $
& \multicolumn{6}{|c|}{BKM/AB: $\quad\sigma \approx 7.67$ $\quad \varrho\approx 2.04$}
${\vphantom{{\sum^{\sum^N}}}}$ \\*[3pt]
\cline{2-7}
   $n$ & $\widetilde{E}_{n,2}(K,G)$ & $\sigma_n$ &$\varrho_n$&
    $\widetilde{E}_{n,\infty}(f_{0},G)$ &
    $\sigma_n$ &$\varrho_n$
     ${\vphantom{\sum^{\sum^{\sum^N}}}}$ \\*[3pt]  \hline
   20 & 7.2e-05 &   -    &  -   &8.2e-05&  -    &  -  \\
   25 & 1.6e-05 & 6.74   & 1.35 &1.5e-05& 7.62  & 1.40\\
   30 & 2.9e-06 & 9.31   & 1.41 &2.6e-06& 9.70  & 1.42\\
   35 & 2.2e-07 & 16.84  & 1.67 &1.8e-07& 17.37 & 1.71\\
   40 & 1.0e-08 & 22.81  & 1.84 &7.7e-09& 23.44 & 1.86\\
   45 & 4.1e-10 & 27.41  & 1.90 &2.8e-10& 28.17 & 1.94\\
   50 & 1.3e-11 & 32.83  & 1.99 &1.0e-11& 31.25 & 1.95\\
   55 & 7.5e-12 & 5.84   & 1.12 &5.3e-12& 7.05  & 1.14\\
   60 & 2.6e-12 & 11.84  & 1.23 &2.0e-12& 11.20 & 1.22\\
   65 & 1.3e-12 & 8.68   & 1.15 &9.9e-13& 8.73  & 1.15\\
   70 & 7.4e-13 & 7.50   & 1.12 &5.9e-13& 7.04  & 1.11\\
   75 & 4.4e-13 & 7.58   & 1.11 &3.5e-13& 7.57  & 1.11\\
   80 & 2.7e-13 & 7.66   & 1.10 &2.1e-13& 7.62  & 1.10\\ \hline
\end{tabular}
\end{center}

\medskip
\caption{BKM approximations to $f_{0}$ and $K$: $3/4$-disk.}
\label{tab:5.15}
\end{table}

\subsection{Rates of decrease of the Bergman polynomials.}\label{sec:5.3}
First, we present results illustrating the rate of decrease of the sequence $\{P_n(1)\}$ for the circular sector considered in Section \ref{subsec:5.2}, with $\alpha=2/5$.
In this case, the nearest singularities of $f_{0}$ in $\Omega$ are the two simple poles at the symmetric points $z_{1}=e^{2i\pi/5}$, $z_{2}=e^{-2i\pi/5}$, where $|\Phi(z_{1})|=|\Phi(z_{2})|\approx 1.145$.
From the proof of Corollary~\ref{cor:thm3.1} and (\ref{eq:47}) we have that
\begin{equation}\label{e:15}
|P_n(z_0)|\leq\|K(\cdot,z_0)-K_n(\cdot,z_0)\|_{L^2(G)}\leq c_1\frac{1}{n^{(2-\alpha)/\alpha}}+c_2\frac{1}{R^n},
\end{equation}
where $1<R<|\Phi(z_{1})|$,
Accordingly, we check to detect the decay in the following two forms:
\begin{equation}\label{eq:Pndecay}
|P_n(1)|\approx c\frac{1}{\varrho^{n}}
\quad\mbox{and}\quad
|P_n(1)|\approx c\frac{1}{n^{\sigma}},
\end{equation}
with $\varrho=|\Phi(z_1)|$ and, in view of the remark made in \cite[pp.~530--531]{MSS}, $\sigma=(2-\alpha)/\alpha+1/2$. We do so, by estimating $\varrho$ and $\sigma$, respectively, by means of the formulas
\begin{equation}
\varrho_{n}=\left(\frac{|P_{n-10}(1)|}{|P_{n}(1)|}\right)^{\frac{1}{10}},
\end{equation}
and
\begin{equation}
\sigma_{n}=\log\bigg(\frac{|P_{n-10}(1)|}{|P_{n}(1)|}\bigg)/\log\bigg(\frac{n}{n-10}\bigg).
\end{equation}

The results listed in Table~\ref{tab:5.9} show clearly the transition from one dominant term to the other in (\ref{e:15}) for values of $n$ around 50.

\begin{table}[t]
\begin{center}
\begin{tabular}{|c|c|c|c|c|}
\hline
${\ } $
& \multicolumn{2}{|c}{$\,\, \sigma=4.5$}
& \multicolumn{1}{c|}{$\,\, \varrho\approx 1.145$}
${\vphantom{{\sum^{\sum^N}}}}$ \\*[3pt]
\cline{2-4}
   $n$ & $|P_{n}(1)|$ & $\sigma_{n}$ & $\varrho_n$
    ${\vphantom{\sum^{\sum^{\sum^N}}}}$ \\*[3pt]  \hline
   10  & 2.6e-02 &  - &   -   \\
   20  & 1.2e-03 &4.51& 1.37  \\
   30  & 7.6e-06 &12.38& 1.65 \\
   40  & 1.7e-06 &5.14&  1.16 \\
   50  & 4.0e-07 &6.57&  1.16 \\
   60  & 1.8e-07 &4.35&  1.08 \\
   70  & 9.1e-08 &4.49&  1.07 \\
   80  & 5.0e-08 &4.50&  1.06 \\
   90  & 2.9e-08 &4.50&  1.05 \\
   100 & 1.8e-08 &4.50&  1.05 \\ \hline
\end{tabular}
\end{center}

\medskip
\caption{Rate of decrease of $|P_{n}(1)|$: Circular sector, $\alpha=2/5$.}
\label{tab:5.9}
\end{table}

We end, by presenting results that illustrate the rate of decrease of the augmented sequence $\{\widetilde{P}_n(1)\}$, for the circular sector considered in Section \ref{subsec:5.2},
where now we consider the two cases $\alpha=3/4$ and $\alpha=4/5$.
When $\alpha=3/4$, then the nearest singularities of $f_{0}$ in $\Omega$ are the two simple poles at the symmetric points $z_{1}=e^{3i\pi/4}$ and $z_{2}=e^{-3i\pi/4}$, where $|\Phi(z_{1})|=|\Phi(z_{2})|\approx1.349$. When $\alpha=4/5$, then the nearest singularities of $f_{0}$ in $\Omega$ are the two simple poles at the symmetric points $z_{1}=e^{4i\pi/5}$ and $z_{2}=e^{-4i\pi/5}$, where $|\Phi(z_{1})|=|\Phi(z_{2})|\approx1.372$.
In both cases, we construct the sequence $\{\widetilde{P}_n(z)\}$ by augmenting the monomial basis functions
with the singular function $z^{1/\alpha-1}$, which reflects the branch point singularity of $f_0$ at $\tau_1=0$,
and we seek to detect the decay of the sequence $\{\widetilde{P}_n(1)\}$ in the form
$$
 |\widetilde{P}_n(1)|\approx c\frac{1}{n^{\sigma}},
$$
where, in view of Theorem~\ref{th:c} and \cite[pp.~530--531]{MSS}, $\sigma=2(2-\alpha)/\alpha+1/2$.
As above, we estimate $\sigma$ by means of the formula
\begin{equation}
\sigma_{n}=\log\bigg(\frac{|\widetilde{P}_{n-10}(1)|}{|\widetilde{P}_{n}(1)|}\bigg)/\log\bigg(\frac{n}{n-10}\bigg).
\end{equation}

The results listed in Tables~\ref{tab:5.11} and \ref{tab:5.13} indicate clearly the convergence of $\sigma_n$ to the predicted value of $\sigma$, indicating that the argument in \cite[pp.~530--531]{MSS} applies also to the case
of the augmented Bergman polynomials.

\begin{table}[t]
\begin{center}
\begin{tabular}{|c|c|c|c|}
   \hline
   ${\ } $
   & \multicolumn{1}{|l}{$\quad \sigma\approx3.833$}&
${\vphantom{{\sum^{\sum^N}}}}$ \\*[3pt]
\cline{1-3}
   $n$ & $|\widetilde{P}_{n}(1)|$ & $\sigma_{n}$
   ${\vphantom{\sum^{\sum^{\sum^N}}}}$ \\*[3pt]  \hline
   10  & 2.8e-03 &  -  \\
   20  & 7.2e-05 &5.30 \\
   30  & 1.1e-05 &4.60 \\
   40  & 3.2e-06 &4.36 \\
   50  & 1.3e-06 &3.86 \\
   60  & 6.6e-07 &3.89 \\
   70  & 3.6e-07 &3.89 \\
   80  & 2.2e-07 &3.88 \\
   90  & 1.4e-07 &3.88 \\
   100 & 9.1e-08 &3.87 \\ \hline
\end{tabular}
\end{center}

\medskip
\caption{Rate of decrease of $|\widetilde{P}_{n}(1)|$: Circular sector, $\alpha=3/4$.}
\label{tab:5.11}
\end{table}

\begin{table}[t]
\begin{center}
\begin{tabular}{|c|c|c|c|}
   \hline
   ${\ } $
   & \multicolumn{1}{|l}{$\quad \sigma=3.5$}&
${\vphantom{{\sum^{\sum^N}}}}$ \\*[3pt]
\cline{1-3}
   $n$ & $|\widetilde{P}_{n}(1)|$ & $\sigma_{n}$
   ${\vphantom{\sum^{\sum^{\sum^N}}}}$ \\*[3pt]  \hline
   10  & 2.3e-03 &  -  \\
   20  & 2.7e-04 &3.10 \\
   30  & 2.2e-05 &6.17 \\
   40  & 8.7e-06 &3.20 \\
   50  & 3.8e-06 &3.72 \\
   60  & 2.0e-06 &3.66 \\
   70  & 1.1e-06 &3.63 \\
   80  & 6.9e-07 &3.61 \\
   90  & 4.5e-07 &3.59 \\
   100 & 3.1e-07 &3.58 \\ \hline
\end{tabular}
\end{center}

\medskip
\caption{Rate of decrease of
$|\widetilde{P}_{n}(1)|$: Circular sector, $\alpha=4/5$.}
\label{tab:5.13}
\end{table}


\bibliographystyle{amsplain}

\def\cprime{$'$}
\providecommand{\bysame}{\leavevmode\hbox to3em{\hrulefill}\thinspace}
\providecommand{\MR}{\relax\ifhmode\unskip\space\fi MR }
\providecommand{\MRhref}[2]{%
  \href{http://www.ams.org/mathscinet-getitem?mr=#1}{#2}
}
\providecommand{\href}[2]{#2}

\end{document}